\documentclass[11pt]{article}
\usepackage[T1]{fontenc}
\usepackage[utf8]{inputenc}
\usepackage{amsfonts}
\usepackage{bm}
\usepackage{amssymb}
\usepackage{latexsym}
\usepackage{amsmath}
\usepackage{amscd}
\usepackage{amsthm}
\usepackage{pifont}
\usepackage{fancyhdr}
\usepackage{epsfig}
\usepackage{alltt}
\usepackage{multimedia}
\usepackage[T1]{fontenc}
\usepackage{psfrag}
\usepackage{url}
\usepackage{graphicx}
\usepackage{ulem}
\usepackage{xcolor}
\usepackage{pstricks}

\setlength{\textwidth}{17cm} \setlength{\textheight}{22cm}
\setlength{\oddsidemargin}{-0.5cm}\setlength{\evensidemargin}{0.5cm}
\setlength{\topmargin}{-.5cm} \setlength{\abovedisplayskip}{3mm}
\setlength{\belowdisplayskip}{3mm}
\setlength{\abovedisplayshortskip}{3mm}
\setlength{\belowdisplayshortskip}{3mm}



\newtheorem{theorem}{Theorem}[section]
\newtheorem{definition}{Definition}

\newtheorem{proposition}[theorem]{Proposition}

\newtheorem{lemma}[theorem]{Lemma}
\newtheorem{remark}[theorem]{Remark}
\newtheorem{corollary}[theorem]{Corollary}

\newtheorem{prop}[theorem]{Proposition}
\newtheorem{corol}[theorem]{Corollary}

\newcommand{\NN}{{\mathcal{N}}}
\newcommand{\FF}{{\mathcal{F}}}
\newcommand{\WW}{{\mathcal{W}}}

\newcommand{\eps}{\varepsilon}

\newcommand{\E}{{\mathbb E}}
\newcommand{\N}{{\mathbb N}}
\newcommand{\Z}{{\mathbb Z}}
\renewcommand{\P}{{\mathbb P}}
\newcommand{\Q}{{\mathbb Q}}
\newcommand{\R}{{\mathbb R}}
\newcommand{\ind}{{\bf 1}}
\newcommand{\Card}{{\rm Card}}

\newcommand{\0}{{\mathbf 0}}
\newcommand{\x}{{\mathbf x}}
\newcommand{\y}{{\mathbf y}}
\renewcommand{\u}{{\mathbf u}}
\renewcommand{\v}{{\mathbf v}}
\newcommand{\w}{{\mathbf w}}
\newcommand{\z}{{\mathbf z}}
\def\i{\mathbf{i}}

\newcommand{\eqdist}{\stackrel{d}{=}}


\title{The 2d-directed spanning forest converges to the Brownian web}

\author{David Coupier\thanks{UPHF, CNRS, EA 4015 - LAMAV, F-59313 Valenciennes Cedex 9, France; david.coupier@uphf.fr}, Kumarjit Saha\thanks{Ashoka University, Sonipat, India; kumarjit.saha@ashoka.edu.in}, Anish Sarkar\thanks{Indian Statistical Institute, Theoretical Statistics and Mathematics Unit, New Delhi 110016, India; anish@isid.ac.in}, Viet Chi Tran\thanks{LAMA, Univ Gustave Eiffel, UPEM, Univ Paris Est Creteil, CNRS, F-77447, Marne-la-Vall\'ee, France; chi.tran@u-pem.fr}}

\begin{document}
\maketitle
\begin{abstract}
The two-dimensional \textit{directed spanning forest} (DSF) introduced by Baccelli and Bordenave is a planar directed forest whose vertex set is given by a homogeneous Poisson point process ${\cal N}$ on $\R^2$. If the DSF has direction $-e_y$, the ancestor $h(\u)$ of a vertex $\u \in {\cal N}$ is the nearest Poisson point (in the $L_2$ distance) having strictly larger $y$-coordinate. {This construction induces complex geometrical dependencies.} In this paper we show that the collection of DSF paths, properly scaled, converges in distribution to the Brownian web (BW). See Theorem \ref{th:FF_BW}. This verifies a conjecture made by Baccelli and Bordenave in 2007 \cite{BB07}.

\end{abstract}

\noindent \textbf{MSC:} 60D05.\\

\noindent \textbf{Keywords:} Stochastic geometry, Random geometric tree, Directed spanning forest, Convergence to the Brownian Web, Poisson point processes, Geometrical interactions, Renewal times.\\

\noindent \textbf{Acknowledgements:} We thank the anonymous Reviewer for very interesting comments that contribute to the improvement of the paper. This work was supported by the GdR GeoSto 3477 and Labex CEMPI (ANR-11-LABX-0007-01). D.C. is funded by ANR PPPP (ANR-16-CE40-0016), and K.S. is funded by the AIRBUS Group Corporate Foundation Chair in Mathematics of Complex Systems established in TIFR CAM. This work was  partially done while K.S., A.S. and V.C.T. were visiting the Institute for Mathematical Sciences, National University of Singapore in 2017. The visit was supported by the Institute.

\section{Introduction and results}
\subsection{The DSF and its conjectured scaling limit}

Let us consider a homogeneous Poisson point process (PPP) $\NN$ with intensity $\lambda>0$ {on the plane $\R^2$, equipped with the Euclidean distance $\|\x\|^2_2=\x^2(1)+\x^2(2)$, where we denote by} $\x(i)$, for $i=1,2$, the $i$-th coordinate of $\x\in\R^2$. In this work, horizontal and vertical axes will be respectively interpreted as space and time axes. Let us also denote by $\mathbb{H}^+(l):=\{\x\in\R^2 : \x(2)\geq l\}$ the half plane of points with ordinates greater than $l\in\R$. The \textit{ancestor} of $\x\in\NN$ is defined as the closest Poisson point to $\x$ {in the open half plane $\{\y \in \R^2 : \y(2) > \x(2)\}$}:
\begin{equation}
\label{def:h-stepL2}
h(\x,\NN) := \text{argmin}\{\|\y-\x\|_2 : \, \y \in \NN , \, \y(2) > \x(2)\} ~.
\end{equation}
In most occasions, we drop the second argument for $h(\x,\NN)$ and merely denote it by $h(\x)$. It is useful to observe that for all $\x\in\R^2$, the point $h(\x)$ is well defined. The \textit{Directed Spanning Forest} (DSF) with direction $-e_y$ on $\R^2$ is the random geometric graph $\mathfrak{F}$ with vertex set $\NN$ and edge set $E:=\{(\x, h(\x)) : \x\in\NN\}$. Since for any $\x\in\NN$, the point $h(\x)$ a.s. denotes a unique Poisson point, the DSF is a directed outdegree-one graph without cycle. This justifies it is called a forest.

The DSF was introduced in 2007 by Baccelli and Bordenave \cite{BB07} as a tool to study the asymptotic properties of the \textit{Radial Spanning Tree} (RST) which actually was the main subject of study in \cite{BB07}. The RST is a tree rooted at the origin $O$ of $\R^{2}$, with vertex set $\NN\cup\{O\}$, in which each $\x\in\NN$ is connected to the closest Poisson point inside the open ball $\{\y\in\R^2 : \, \|\y\|_2<\|\x\|_2\}$. The authors showed that the DSF is an approximation of the RST, in distribution, locally and far from the origin.

However, the DSF appears as truly interesting in itself since it admits beautiful conjectures, already mentioned in \cite{BB07}. A trajectory of the DSF is a sequence $(\x,h(\x),h(h(\x)),\ldots)$ of successive ancestors. First, is it true that any two given trajectories of the DSF  eventually coalesce with probability $1$? In other words, is the DSF a tree? This question was solved in \cite{CT13} by Coupier and Tran using an efficient percolation technique, namely the Burton and Keane argument \cite{burtonkeane}. Besides,  Baccelli and Bordenave showed that under diffusive scaling, any trajectory of the DSF converges in distribution to a Brownian motion. Then they conjectured a stronger result {\cite[Section 7.3]{BB07}}: the convergence under this diffusive scaling, of the whole forest $\mathfrak{F}$ to the so-called Brownian web (BW).

In this paper we prove this second and stronger conjecture. In fact, we prove a slightly stronger result in the sense that we construct a dual forest and show that under diffusive scaling, the DSF and its dual jointly converge in distribution to the BW and its dual.\\

A natural strategy to answer these questions would be to exhibit some independence (or Markov) properties in time (i.e. w.r.t. the vertical axis) for any couple of trajectories of the DSF. But this strategy runs up against strong dependencies, due to the construction rule of the DSF $\mathfrak{F}$, which are of two types: between different trajectories on the one hand and within a single trajectory on the other hand. See Figure \ref{fig:notmarkov} for an illustration of these two dependence phenomena. Let us denote by $B(\x,r)$ the closed Euclidean ball with radius $r$. The construction of the ancestor $h(\x)$ of $\x$ implies that the interior of the semi-ball $B^+(\x,\|\x-h(\x)\|_2) := B(\x,\|\x-h(\x)\|_2)\cap\mathbb{H}^+(\x(2))$ is empty of Poisson points. Since this semi-ball overlaps the half-plane $\mathbb{H}^+(h(\x)(2))$, we have information coming from the past steps: the ancestor of $h(\x)$ cannot belong to the resulting intersection. Roughly speaking, the past of a DSF trajectory may influence its future. Furthermore, when the successive ancestors of $\x$ are constructed, the resulting empty region, called the \textit{history set}, may have a complicated shape: it is a union of semi-balls centered at already visited vertices intersected with a proper half plane (we shall be more precise in the sequel). This random region is not necessarily connected and cannot be \textit{a priori} bounded.

\begin{figure}[!ht]
\begin{center}
\begin{tabular}{cc}
\psfrag{z}{\small{$\z$}}
\psfrag{y}{\small{$\y$}}
\psfrag{x}{\small{$\x$}}
\includegraphics[width=4.8cm,height=2.5cm]{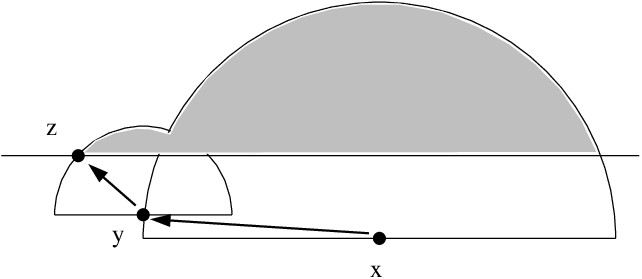}
\hspace{1cm}
&
\psfrag{a}{\small{$\x$}}
\psfrag{b}{\small{$\y$}}
\includegraphics[width=6.3cm,height=4.5cm]{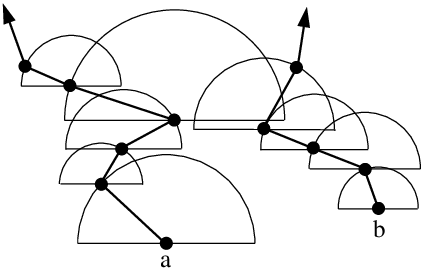}
\\
(a) & (b)
\end{tabular}
\caption{{\small \textit{(a) This picture illustrates the dependence phenomenon within a single trajectory and how the past trajectory may influence its next steps. It represents a Poisson point $\x$ and its first two ancestors, i.e. $\y=h(\x)$ and $\z=h(h(\x))$, and the two resulting semi-balls. The grey area corresponds to the history set of this trajectory which can not have Poisson points in its interior. It is worth pointing out here that the (large) empty semi-ball $B^+(\x,\|\x-h(\x)\|_2)$ may influence the construction of many ancestors of the initial vertex $\x$. (b) This second picture illustrates the dependence phenomenon between two DSF trajectories when the resulting semi-balls corresponding to their constructions overlap. This overlapping locally acts as a repulsive effect between trajectories starting at $\x$ and $\y$.}}}\label{fig:notmarkov}
\end{center}
\end{figure}

In \cite{FINR04}, Fontes et al introduced a suitable Polish space to study the BW, characterized its distribution (in Theorem \ref{theorem:Bwebcharacterisation} below) and provided criteria ensuring weak convergence to the BW (see Theorem \ref{thm:BwebConvergenceNoncrossing1} in Section \ref{sect:NewCVCriteria}). Since then, convergence to the BW for various directed forests or navigation schemes have been extensively studied and thence, the BW appeared as the universal scaling limit for a large number of seemingly unrelated models. Let us cite: \cite{berestyckigarbansen,NRS05} in the context of coalescing system of independent nonsimple random walks; \cite{CFD09,FFW05} in the context of drainage networks; \cite{SS13} for an oriented percolation model; \cite{NT12} in connection with Hastings-Levitov planar aggregation models; and \cite{berestyckigarbansen,colettidiasfontes,colettivalle,coupiermarckerttran} in the context of radial systems of coalescing trajectories. In many of these papers, the choice of the ancestor of any vertex $\x$ does not depend on the past, i.e., on what happens below ordinate $\x(2)$, allowing to easily introduce Markov processes and use martingale convergence theorems or Lyapunov functions. As explained above, this is no longer true for the DSF  because of complex geometrical dependencies. Recently, several papers \cite{RSS15,vallezuaznabar}-- Saha and Sarkar are involved in the first one --have considered modifications of the DSF in order to make the problem more tractable but until this paper, the conjecture of Baccelli and Bordenave remained open.

\subsection{The Brownian web}

The BW  appeared for the first time in the literature in the seminal papers of Arratia \cite{A79,arratia-unpub}. In \cite{A79}, the author studied the diffusive scaling limit of coalescing simple symmetric random walks starting from every point of $2\Z$ at time $0$ and showed that this collection converges to a collection of coalescing Brownian motions starting from every point on $\R$ at time $0$. {In \cite{arratia-unpub}, Arratia generalizes this by proposing a construction with paths starting from space-time points instead of just starting at time $0$.} For a general review on the BW see \cite{SSS17} and references therein. Later T\'{o}th and Werner \cite{TW98} gave a construction of a system of coalescing Brownian motions starting from every point in space-time plane $\R^2$ and used it to construct the true self-repelling motion.

The framework (topologies, spaces, characterization and convergence criteria) that we will use in this paper have been provided by Fontes et al. in \cite{FINR04}. Let us recall some relevant details. Let $\R^{2}_c$ be the completion of the space time plane $\R^2$ with respect to the metric
\begin{equation*}
\rho((x_1,t_1),(x_2,t_2)) := |\tanh(t_1)-\tanh(t_2)|\vee \Bigl| \frac{\tanh(x_1)}{1+|t_1|} - \frac{\tanh(x_2)}{1+|t_2|} \Bigr| ~.
\end{equation*}
As a topological space, $\R^{2}_c$ can be identified with the continuous image of $[-\infty,\infty]^2$ under a map that identifies the line $[-\infty,\infty]\times\{\infty\}$ with the point $(\ast,\infty)$, and the line $[-\infty,\infty]\times\{-\infty\}$ with the point $(\ast,-\infty)$. We define a path $\pi$ with starting time $\sigma_{\pi}\in [-\infty,\infty]$ as a continuous mapping $\pi :[\sigma_{\pi},\infty]\rightarrow [-\infty,\infty] \cup \{ \ast \}$ such that $\pi(\infty)= \ast$ and, when $\sigma_\pi = -\infty$, $\pi(-\infty)= \ast$. Notice that the mapping $t \mapsto (\pi(t),t) \in (\R^2_c,\rho)$ is continuous on $[\sigma_{\pi},\infty]$. We then define $\Pi$ to be the space of all paths in $\R^{2}_c$ with all possible starting times in $[-\infty,\infty]$.
The following metric, for $\pi_1,\pi_2\in \Pi$
\begin{align*}
d_{\Pi} (\pi_1,\pi_2) := & |\tanh(\sigma_{\pi_1})-\tanh(\sigma_{\pi_2})|\vee \\
& \sup_{t\geq \sigma_{\pi_1}\wedge \sigma_{\pi_2}} \Bigl|\frac{\tanh(\pi_1(t\vee\sigma_{\pi_1}))}{1+|t|}-\frac{\tanh(\pi_2(t\vee\sigma_{\pi_2}))}{1+|t|}\Bigr|
\end{align*}
makes $\Pi$ a complete, separable metric space. The metric $d_{\Pi}$ is slightly different from the original choice in \cite{FINR04} which is somewhat less natural as explained in \cite{SS08}. Convergence according to this metric can be described as locally uniform convergence of paths as well as convergence of starting times. Let ${\mathcal H}$ be the space of compact subsets of $(\Pi,d_{\Pi})$ equipped with the Hausdorff metric $d_{{\mathcal H}}$ given by,
\begin{equation*}
d_{{\mathcal H}}(K_1,K_2) := \sup_{\pi_1 \in K_1} \inf_{\pi_2 \in K_2} d_{ \Pi} (\pi_1,\pi_2) \vee \sup_{\pi_2 \in K_2} \inf_{\pi_1 \in K_1} d_{\Pi} (\pi_1,\pi_2) ~.
\end{equation*}
The couple $({\mathcal H},d_{{\mathcal H}})$ is a complete separable metric space. Let also $B_{{\mathcal H}}$ be the Borel $\sigma$-algebra on the metric space $({\mathcal H},d_{{\mathcal H}})$. The Brownian web ${\mathcal W}$ is then defined and characterized by the following result:

\begin{theorem}[Theorem 2.1 of \cite{FINR04}]
\label{theorem:Bwebcharacterisation}
There exists an $({\mathcal H}, {\mathcal B}_{{\mathcal H}})$-valued random variable
${\mathcal W}$ whose distribution is uniquely determined by
the following properties:
\begin{itemize}
\item[$(a)$] from any deterministic point $\x\in\R^2$, there is  almost surely a unique path $\pi^{\x}\in {\mathcal W}$  starting from $\x$;
\item[$(b)$] for a finite set of deterministic points $\x^1,\dotsc, \x^k \in \R^2$, the collection $(\pi^{\x^1},\dotsc,\pi^{\x^k})$ is distributed as coalescing Brownian motions starting from $\x^1,\dotsc,\x^k$;
\item[$(c)$] for any countable deterministic dense set ${\mathcal D}$ of $\R^2$, ${\mathcal W}$ is the closure of $\{\pi^{\x}: \x\in {\mathcal D} \}$ in $(\Pi, d_{\Pi})$  almost surely.
\end{itemize}
\end{theorem}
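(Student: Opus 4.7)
The plan is to prove existence by an explicit construction starting from a countable dense deterministic set, then to show via a tightness argument that the closure in $(\Pi,d_\Pi)$ is a.s. compact and contains paths starting from every $\x\in\R^2$, and finally to verify that any random element of $(\HH,\mathcal{B}_\HH)$ satisfying $(a)$--$(c)$ has the same law as this construction.

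\textbf{Construction.} Fix a countable dense deterministic set $\mathcal{D}=\{\x_1,\x_2,\dots\}\subset\R^2$. I would build a collection $(\pi^{\x_n})_{n\geq 1}$ by induction: set $\pi^{\x_1}$ to be a Brownian motion started at $\x_1$; given $\pi^{\x_1},\dots,\pi^{\x_{n-1}}$, let $\pi^{\x_n}$ be an independent Brownian motion started at $\x_n$ which is forced to coalesce with the first path it meets among $\pi^{\x_1},\dots,\pi^{\x_{n-1}}$ (where two Brownian paths started from distinct space-time points meet a.s. in finite time, so the procedure is well defined). One checks by construction that, for every finite subfamily, the law is that of coalescing Brownian motions, giving $(b)$ for $\x^1,\dots,\x^k\in\mathcal{D}$. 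Define $\WW_\mathcal{D}:=\{\pi^{\x}:\x\in\mathcal{D}\}$ and $\WW:=\adh(\WW_\mathcal{D})$ in $(\Pi,d_\Pi)$.

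\textbf{Compactness and extension.} The crucial step is to show that $\WW$ is a.s. compact in $(\Pi,d_\Pi)$. The choice of the metric $d_\Pi$ (which compactifies space and time and damps spatial oscillations by the factor $1/(1+|t|)$) is tailored to reduce tightness to a modulus-of-continuity estimate on Brownian motion on compact time intervals, uniform in the starting point once the factor $1/(1+|t|)$ is taken into account. Standard Kolmogorov-type estimates for Brownian paths, combined with the observation that coalescence only reduces the total number of distinct paths, yield the required tightness. Given compactness, for any deterministic $\y\in\R^2$ and any sequence $\x_{n_k}\to\y$ in $\mathcal{D}$, a subsequence $(\pi^{\x_{n_k}})$ converges to some $\pi^\y\in\WW$ started at $\y$, giving existence of a starting path from $\y$; property $(b)$ extends from $\mathcal{D}$ to arbitrary finite deterministic subsets of $\R^2$ by continuity of the finite-dimensional Brownian coalescing laws.

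\textbf{Uniqueness from deterministic points.} The delicate point is $(a)$: one must exclude that two distinct subsequential limits $\pi^\y,\tilde\pi^\y\in\WW$ start from the same deterministic $\y$. The argument is to note that any two such paths would be limits of coalescing Brownian trajectories started in $\mathcal{D}$ very close to $\y$; by $(b)$, pairs of coalescing Brownian motions started from two points at distance $\delta$ coalesce within time $O(\delta^2)$ with probability tending to $1$ as $\delta\downarrow 0$, so at any fixed deterministic starting location the two limit paths must coincide almost surely. This is the main obstacle, as one needs to rule out pathological ``special'' points where multiplicity can occur — but the key input is precisely that $\y$ is deterministic, so by Fubini such multiplicity has zero probability at $\y$.

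\textbf{Uniqueness in law.} Finally, let $\WW'$ be any $(\HH,\mathcal{B}_\HH)$-valued random variable satisfying $(a)$--$(c)$. For a given countable deterministic dense $\mathcal{D}$, $(a)$ and $(b)$ determine the joint law of $(\pi^{\x})_{\x\in\mathcal{D}}$ as coalescing Brownian motions, hence the law of the countable family is prescribed. Property $(c)$ then forces $\WW'$ to be the closure of this family in $(\Pi,d_\Pi)$, so the law of $\WW'$ in $(\HH,d_\HH)$ is determined. This concludes existence and uniqueness. The hardest technical ingredient in this scheme is the joint tightness/compactness and the uniqueness-at-deterministic-points step, both of which rely critically on the scaling and coalescence properties of pairs of Brownian motions.
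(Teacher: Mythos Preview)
The paper does not prove this theorem; it is quoted verbatim from \cite{FINR04} (Fontes--Isopi--Newman--Ravishankar) as a foundational input and no argument is supplied in the present work. So there is no ``paper's own proof'' to compare against.

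That said, your sketch follows the architecture of the original proof in \cite{FINR04}: build coalescing Brownian motions from a countable dense skeleton, prove that the closure in $(\Pi,d_\Pi)$ is a.s.\ compact, extend paths to arbitrary deterministic starting points by limits, argue uniqueness at deterministic points via the coalescence-time estimate for nearby Brownian motions, and finally deduce uniqueness in law from $(c)$. Two places where your outline is thin relative to \cite{FINR04}: first, the compactness step is the real crux and requires a quantitative equicontinuity estimate for the entire skeleton (not just a single Brownian path), which in \cite{FINR04} is obtained via a counting argument on the number of distinct paths at time $t_0+\epsilon$ that pass through a given interval at time $t_0$; your appeal to ``Kolmogorov-type estimates'' and ``coalescence only reduces the number of paths'' is gesturing at the right phenomenon but is not yet a proof. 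Second, for property $(a)$ you correctly identify the issue but your Fubini remark is imprecise: the statement is that for each \emph{fixed} deterministic $\y$ the path is a.s.\ unique, and the argument in \cite{FINR04} goes through showing that the approximating skeleton paths from both sides of $\y$ coalesce with the limit path immediately after time $\y(2)$, which uses more than just the $O(\delta^2)$ coalescence time. Overall your plan is sound and matches the literature, but the two highlighted steps would need substantial expansion to be a complete argument.
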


The above theorem shows that the collection is almost surely determined by countably many coalescing Brownian motions.

\subsection{Our convergence theorem and the key ideas of the proof}

Let us return to the DSF. To state our result formally we need to introduce some more notation. From a vertex $\u \in \NN$, define $h^0(\u) := \u$ and $h^{k}(\u) := h(h^{k-1}(\u))$, for $k\geq 1$. Taking the edges $\{ (h^{k-1}(\u),h^{k}(\u)) : k \geq 1\}$ to be straight line segments, we parameterize the path started from $\u$ and formed by these edges as the piecewise linear function $\pi^{\u} : [\u (2), \infty) \to \mathbb R$ such that $\pi^{\u}(h^{k}(\u)(2)) := h^{k}(\u)(1)$ for every $k \geq 0$ and
$\pi^{\u}(t)$ is linear in the interval $[h^{k}(\u)(2),h^{k+1}(\u)(2)]$. The collection of all DSF paths is denoted by ${\cal X} := \{\pi^{\u}:\u \in \NN\}$.

For given real numbers $\gamma,\sigma>0$, integer $n\geq 1$ and for a path $\pi$ with starting time $\sigma_{\pi}$, the {diffusively} scaled path $\pi_n(\gamma,\sigma) : [\sigma_{\pi}/n^2\gamma, \infty] \to [-\infty, \infty]$ is given by
\begin{equation}
\label{defi:ScaledPath}
\pi_n(\gamma,\sigma)(t) := \frac{\pi(n^2\gamma t)}{n \sigma} ~.
\end{equation}
Hence, the scaled path $\pi_n(\gamma,\sigma)$ has the starting time $\sigma_{\pi_n(\gamma,\sigma)} = \sigma_{\pi}/n^2\gamma$. For each $n\geq 1$, let ${\cal X}_n(\gamma,\sigma) := \{\pi_n^{\u}(\gamma,\sigma):\u \in \NN\}$ be the collection of all the scaled paths. The closure $\overline{\cal X}_n(\gamma,\sigma)$ of ${\cal X}_n(\gamma,\sigma)$ in $(\Pi,d_{\Pi})$ is a $({\cal H},{\cal B}_{{\cal H}})$-valued random variable which a.s. consists of non-crossing paths only. This property will be used in the sequel frequently.

Recall that $\lambda > 0$ is the intensity of the homogeneous PPP $\mathcal{N}$. Our main result, illustrated by Figure \ref{fig1}, solves the conjecture of Baccelli and Bordenave {\cite[Section 7.3]{BB07}}:

\begin{theorem}
\label{th:FF_BW}
{There exist $\sigma=\sigma(\lambda)>0$ and $\gamma=\gamma(\lambda)>0$ such that the sequence $
\big\{  \overline{{\cal X}}_n(\gamma,\sigma) : \, n \geq 1 \big\}$
converges in distribution to ${\cal W}$ as $({\cal H},{\cal B}_{{\cal H}})$-valued random variables as $n\rightarrow\infty$.}
\end{theorem}

\begin{figure}[!ht]
\begin{center}
\begin{tabular}{cc}
\includegraphics[width=4.5cm,height=4.5cm]{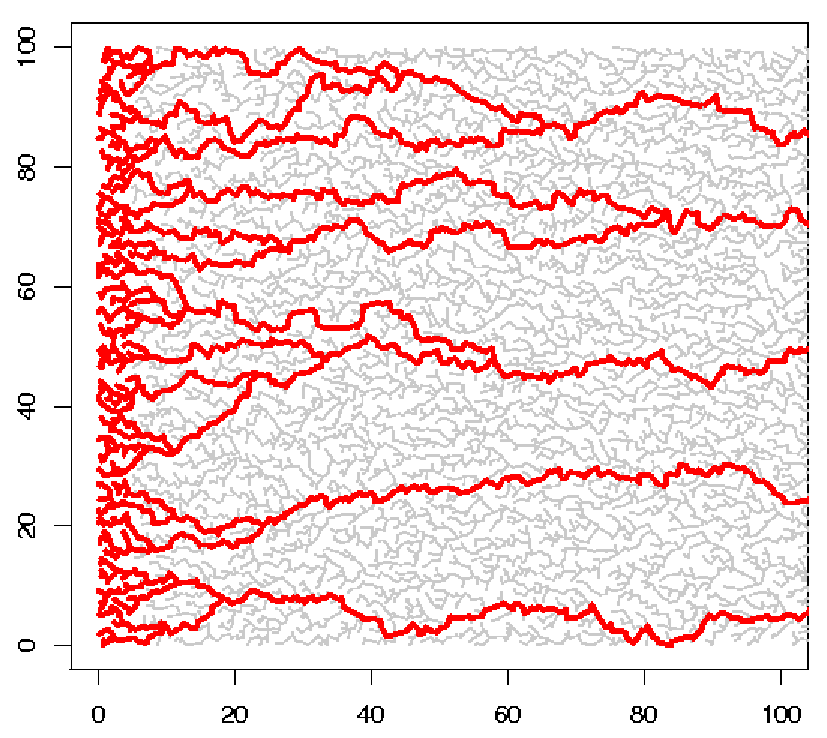} & \includegraphics[width=7.5cm,height=4.5cm]{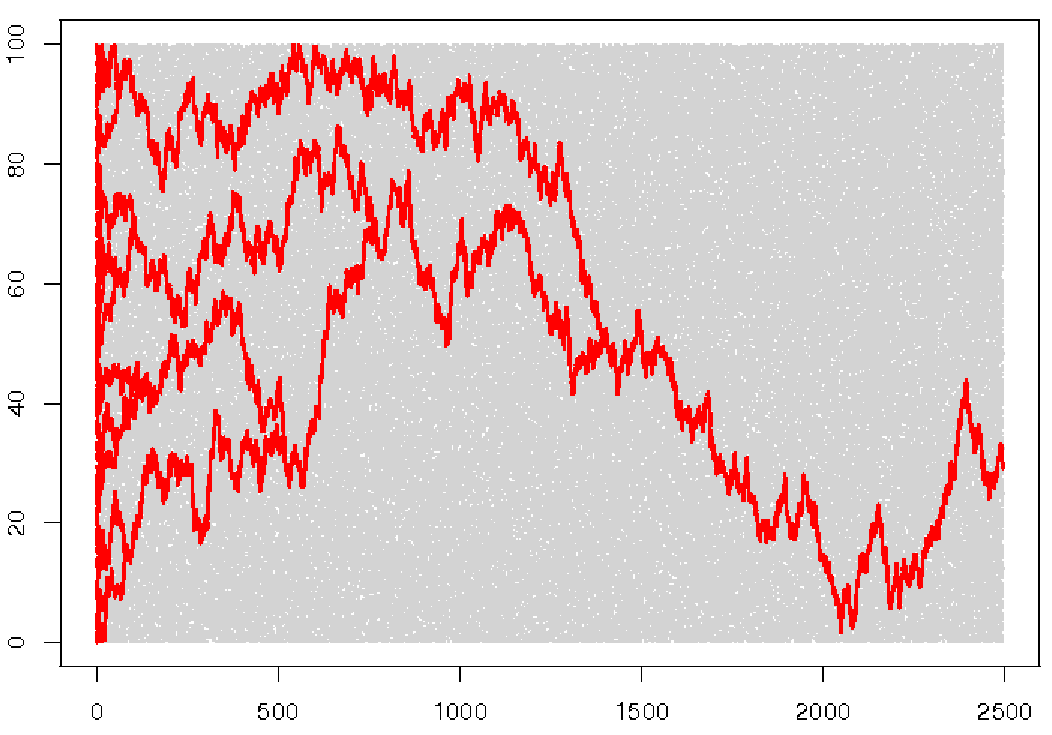} \\
(a) & (b)\\
\end{tabular}
\caption{{\small \textit{Simulations of the Directed Spanning Forest with direction $-e_{x}$ (this direction is chosen for the convenience of the graphical representations). The trajectories coming from vertices with abscissa $0\leq x\leq 5$ and ordinates $0\leq y\leq 100$ are represented in bold red lines. These simulations are taken from \cite{CT13}. On (b), the red paths clearly look like coalescing Brownian motions and they all coalesce before time $1500$.}}}\label{fig1}
\end{center}
\end{figure}

{In Section \ref{sec:cvBW} (Theorem \ref{th:FF_BW_complete}) we prove a stronger version of Theorem \ref{th:FF_BW} by showing that the DSF and its dual forest (which is defined later) jointly converge to the Brownian web and its dual process.}

Our proof actually appears as the combination of three main arguments or ideas described below. First, the criteria ensuring (weak) convergence to the BW have been meaningfully relaxed since the original convergence result in \cite{FINR04}, recalled here in Theorem \ref{thm:BwebConvergenceNoncrossing1} (Section \ref{sect:NewCVCriteria}). Indeed, in the literature \cite{colettidiasfontes,FFW05}, the proofs of criterion $(B_2)$ systematically require that the considered forest satisfies some FKG inequality (on its trajectories). But, this strong property becomes difficult to check, or even false, when dependence phenomena arise as it is the case for the DSF. Recently, in the context of non-crossing path models, Schertzer et al. \cite[Theorem 6.6]{SSS17} have replaced criterion $(B_2)$ with a \textit{wedge condition} involving a suitable dual of the considered forest. In this paper, we provide a new criteria (Theorem \ref{thm:JtConvBwebGenPath}), similar in the spirit to \cite[Theorem 6.6]{SSS17}, in which criterion $(B_2)$ is replaced with the fact that ``no limiting primal and dual paths can spend positive Lebesgue time together''. This is condition $(iv)$ of Theorem \ref{thm:JtConvBwebGenPath}.

The second key tool is a new and general Laplace type argument, stated in Theorem \ref{thm:LaplaceCoalescingTimeTailNew}, allowing to establish a coalescence time estimate for any couple of trajectories of the DSF (Theorem \ref{thm:CoalescingTimetail}). Obtaining such coalescence time estimate is always a crucial step in the literature to prove convergence to the BW. We also think that Theorem \ref{thm:LaplaceCoalescingTimeTailNew} is interesting in itself and very robust. In particular, it should provide the required coalescence time estimates for all the drainage network models in the basin of attraction of the BW \cite{colettidiasfontes,colettivalle,FFW05,RSS15,vallezuaznabar}. See Remark \ref{rmk:GeneralLaplace} for further details. The coalescence time estimate for the DSF (Theorem \ref{thm:CoalescingTimetail}) plays a central role in the proof of condition $(iv)$ previously cited.

The third main ingredient is a very accurate study, conducted in Section \ref{sect:Renewal}, of the joint evolution of DSF trajectories. Exploiting the geometric properties of the DSF, we are able to exhibit some \textit{renewal events} (at some random times) for the joint evolution of multiple trajectories. In case of evolution of a single trajectory, these renewal events give some suitable configurations allowing us to recover Markovian structure (see Proposition \ref{prop:SinglePtRWalk}).
For joint evolution of two paths, we show that the distance between the paths observed at these random times behaves like a random walk when the paths are sufficiently far apart.
Moreover, we show that both time and width of the explored region (by the trajectories) between two consecutive renewal events admit tail distributions with sub-exponential decays. All these properties allow us to show that the distance process satisfies the  conditions of the Laplace argument, more precisely conditions of Theorem \ref{thm:LaplaceCoalescingTimeTailNew2}.



\subsection{Application to the RST: the highways and byways problem}

In \cite[Theorem 2.1]{BB07}, Baccelli and Bordenave also described the semi-infinite paths of the Radial Spanning Tree (RST). In particular, they showed that the (random) number $\chi_r$ of semi-infinite paths of the RST crossing the circle $\mathcal{C}_r$-- centered at the origin $O$ and with radius $r$ --tends to infinity with probability $1$ as $r\to\infty$. A natural question is then to specify the growth rate of $\chi_r$ w.r.t. the radius $r$. Since the article of Hammersley and Welsh \cite{HaWe}, this question is known as the \textit{highways and byways problem}.

A general method, recently proposed by Coupier \cite{C2017} and applied to various geometrical random trees, asserts that $\chi_r$ is negligible w.r.t. $r$. Such result for the RST was already known since \cite{baccellicoupiertran}. Furthermore, this method can be performed whenever the considered tree satisfies the two following conditions (see Section 6 of \cite{C2017}). First, it can be approximated, locally and far from the origin, by a directed forest-- as the DSF approximates the RST. Secondly, the approximating directed forest has to satisfy a suitable coalescence time estimate. Theorem \ref{thm:CoalescingTimetail} fulfills this last condition for the DSF. Hence, the method developed in Section 6 of \cite{C2017} applies without major modifications to the RST and leads to the following result:

\begin{theorem}
\label{theo:sublinRST}
{For any $\epsilon>0$, $r^{-(3/4+\epsilon)} \chi_{r}$ tends to $0$, almost surely and in expectation, as $r$ tends to infinity.}
\end{theorem}

\subsection{Organization of the paper}

In Section \ref{sect:JointProcess}, a discrete process called the \textit{joint exploration process} is introduced to describe the joint evolution of DSF paths. The dependence structure of this process is encoded with the notion of \textit{history set}.
In Section \ref{sect:GoodStep} we able to obtain good control over the evolution of history sets.
Some particular random times, called \textit{renewal steps} and corresponding to the renewal
events mentioned above, are put forward in Section \ref{sect:Renewal}.
In Section \ref{sec:HittingTimeEst}, we present a general technique to study the coalescence time tail decay based on a Laplace criterion.  The coalescence time estimate (Theorem \ref{thm:CoalescingTimetail}) is stated and proved by applying this criterion in Section \ref{sect:CoalTimeMain}. In Section \ref{sect:NewCVCriteria}, we describe new criteria (Theorem \ref{thm:JtConvBwebGenPath}) ensuring the weak convergence of a forest and a suitable dual to the BW and its dual.

Several qualitative results of this paper involve constants. For the sake of clarity, we will use $C_0$ and $C_1$ to denote two positive constants, whose exact values may change from one line to the other. The important thing is that both $C_0$ and $C_1$ are universal constants whose values will depend only on the intensity of the PPP, the number $k$ of considered trajectories and a constant $\kappa$ that we will introduce to describe the renewal steps (see \eqref{def:tau_j} and \eqref{def:EventAj}).



\section{The joint exploration process}
\label{sect:JointProcess}

\subsection{Construction}

Let $k\in\N$ be a positive integer. Let us consider $k$ starting points $\u_1,\ldots,\u_k\in\R^2$. In this section, following \cite{RSS15}, we define a discrete time process $\{(g_n(\u_1),\ldots, g_n(\u_k), H_n) : n\geq 0\}$ in an inductive way for the joint exploration of the $k$ paths $\pi^{\u_1},\ldots,\pi^{\u_k}$ so that they move together. This discrete time process is the joint exploration process which makes the subject of this section. The sequence $\{(g_n(\u_1),\ldots, g_n(\u_k)) : n\geq 0\}$ is a representation of the trajectories while $\{H_n : n\geq 0\}$ will be the associated dependence set.

Before defining precisely the joint exploration process, let us first discuss the typical \textit{initial configuration} $(\u_1,\ldots,\u_k,H_0)$ from which the joint exploration process starts. The starting points $\u_1,\ldots,\u_k$ can be deterministic, and possibly with the same ordinate (as in Section \ref{sec:Tail}), or merely points of the PPP $\NN$. In order to cover the case of configurations obtained at good step, we have to take into account some initial extra information encoded with a random compact set $H_0$. Sometimes (as in Section \ref{sec:Tail}), $H_0$ will be empty. For the moment, we only demand that $H_0$ a.s. satisfies
\begin{equation}
\label{AvoidingH0}
\left( \NN \cup \{\u_1,\ldots,\u_k\} \right) \cap \mbox{int}(H_0) = \emptyset ~,
\end{equation}
where $\mbox{int}(H_0)$ denotes the interior of $H_0$. Notice that the points $\u_1,\cdots, \u_k$ can be on the boundary of $H_0$. Extra conditions will be added in Section \ref{sect:GoodStep} but we can omit them for the moment.

Set $g_0(\u_i)= \u_i$ for $i=1,\ldots,k$. In the joint exploration process, only the lowest vertex moves, denoted by $W_{n}^{\text{move}}$, while the $k-1$ other ones remain unchanged. In case several vertices have the same lower ordinate, we move them one by one starting from the leftmost one:
\begin{itemize}
\item[(i)] $W_{0}^{\text{move}}:= \text{argmin}\{\w(1): \w \in \{\u_1, \ldots, \u_k\} \mbox{ and }\w(2)=r_0\}$ where $r_{0}:= \min\{\u_1(2),\ldots , \u_k(2)\}$, and $W_{0}^{\text{stay}}:= \{\u_1, \dots , \u_k\} \setminus W_{0}^{\text{move}}$;
\item[(ii)] For $1\leq i \leq k$,
\begin{align*}
g_{1}(\u_i) :=
\begin{cases}
h \bigl (g_{0}(\u_i), {\NN}\cup W_0^{\text{stay}}\bigr) &\text{ if } g_{0}(\u_i) = W_0^{\text{move}}\\
g_0(\u_i)&\text{ otherwise}.
\end{cases}
\end{align*}
\end{itemize}
After the first step, the history set $H_0$ is updated into $H_{1} = H_{1}(\u_1,\ldots, \u_k)$:
$$
H_{1} := \Big( H_0 \cup {B^+(W_{0}^{\text{move}},\|h(W_{0}^{\text{move}})-W_{0}^{\text{move}}\|_2)} \Big) \cap \mathbb{H}^+(W_{1}^{\text{move}}(2)) ~,
$$
where $W_{1}^{\text{move}}(2):=\min\{g_{1}(\u_i)(2): 1\leq i \leq k\}$ is the next moving vertex.\\

By induction, given $(g_{n}(\u_1),\ldots,g_{n}(\u_k), H_n(\u_1,\ldots, \u_k))$, for any $n\geq 1$, let us set
\begin{itemize}
\item[(i)] $W_{n}^{\text{move}}:= \text{argmin}\{\w(1): \w \in \{g_n(\u_1), \ldots, g_n(\u_k)\} \mbox{ and } \w(2)=r_{n}\}$ where $r_{n} := \min\{g_n(\u_1)(2),\ldots, g_n(\u_k)(2)\}$, and $W_{n}^{\text{stay}}:= \{g_n(\u_i): 1 \leq i \leq k\} \setminus W_{n}^{\text{move}}$;
\item[(ii)] For $1\leq i \leq k$,
\begin{align*}
g_{n+1}(\u_i) :=
\begin{cases}
h \bigl(g_{n}(\u_i),{\NN} \cup W_n^{\text{stay}}\bigr) &\text{ if } g_{n}(\u_i) = W_n^{\text{move}}\\
g_n(\u_i)&\text{ otherwise}.
\end{cases}
\end{align*}
\end{itemize}
When $g_n(\u_1),\dots, g_n(\u_k)$ all have different ordinates-- and this is a.s. the case whenener they are points of $\mathcal{N}$ --, $W_n^{\text{move}}$ is given by the $g_{n}(\u_i)$ having the smallest ordinate. When this smallest ordinate is realized by at least two vertices, then $W_n^{\text{move}}$ corresponds to the one having the smallest abscissa.

After the $(n+1)$-th move, the new level $W_{n+1}^{\text{move}}(2):=\min\{g_{n+1}(\u_i)(2), 1\leq i\leq k\}$ allows to define the next history set $H_{n+1}=H_{n+1}(\u_1,\ldots, \u_k)$:
$$
H_{n+1} := \Big( H_{n} \cup {B^+(W_{n}^{\text{move}},\|h(W_{n}^{\text{move}})-W_{n}^{\text{move}}\|_2)} \Big) \cap \mathbb{H}^+(W_{n+1}^{\text{move}}(2)) ~.
$$

\begin{figure}[!ht]
\begin{center}
\psfrag{x}{$\small{\u_1}$}
\psfrag{y}{$\small{\u_2}$}
\psfrag{z}{$\small{\u_3}$}
\psfrag{a}{${\tiny W_{5}^{\text{move}}(2)}$}
\psfrag{b}{${\tiny W_{2}^{\text{move}}(2)}$}
\psfrag{d}{${\tiny W_{6}^{\text{move}}(2)}$}
\psfrag{e}{${\tiny W_{0}^{\text{move}}(2)}$}
\includegraphics[width=11cm,height=4.2cm]{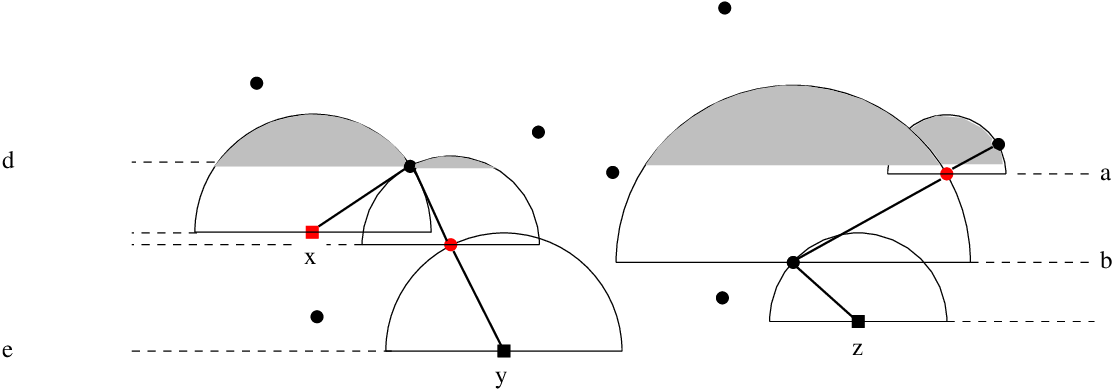}
\caption{\label{Fig:gn} {\small \textit{First $6$ steps of the joint process $(g_{n}(\u_1), g_{n}(\u_2), g_{n}(\u_3))_{n\geq 0}$ starting from $\u_1, \u_2, \u_3$ (given by the squares). To simplify the picture, we take $H_0
= \emptyset$. The first move concerns $\u_2$, i.e. $W_{0}^{\text{move}}=\u_2$, while the second and third ones concern the trajectory starting at $\u_3$. The triplet $(g_{3}(\u_1), g_{3}(\u_2), g_{3}(\u_3))$ is represented by red vertices. Until the fourth step, $\u_1$ has not moved yet: $g_{4}(\u_1)=\u_1$. And at the fifth step, the ancestor of $W_4^{\text{move}}=g_{4}(\u_1)$ is an element of $W_4^{\text{stay}}$, namely $g_{4}(\u_2)$, which means that $\pi^{\u_1}$ and $\pi^{\u_2}$ coalesce. The grey area corresponds to $H_{6}(\u_1,\u_2,\u_3)$. On both sides of the picture, the levels $W_{n}^{\text{move}}(2)$, $0\leq n\leq 6$, are indicated.}}}
\end{center}
\end{figure}

In the sequel, we need to work with a filtration encoding all the information until the current step including the initial configuration $(\u_1,\ldots,\u_k,H_0)$. For any integer $n$, let us set
$$
\mathcal{F}_n := \sigma \Big \{g_l(\u_i) ,\ i\in \{1,\dots k\},\ 0\leq l\leq n ,\ H_0 \Big\} ~.
$$

The next result summarizes some elementary properties of the joint exploration process which can be proved by induction.

{\begin{lemma}
\label{lem:ElemProperties}
Under Assumption (\ref{AvoidingH0}), the following properties hold.
\begin{itemize}
\item[$(i)$] The joint exploration process $\{(g_n(\u_1),\ldots, g_n(\u_k),H_n) : n \geq 0\}$ is an $(\mathcal{F}_n)$-Markov chain with state space $(\R^2)^k\times \{A \subseteq \R^2 : A \text{ is compact}\}$.
\item[$(ii)$] A.s. and for any $n$, $(\NN \cup \{\u_1,\ldots,\u_k\}) \cap \mbox{int}(H_n)=\emptyset$.
\item[$(iii)$] A.s. and for any $n$, any $1\leq i\leq k$, the vertex $g_n(\u_i)$ necessarily lies on the boundary of $\mathbb{H}^+(W_{n}^{\text{move}}(2)) \setminus H_n$ whenever it is different from the $\u_i$'s (see Figure \ref{Fig:gn}).
\item[$(iv)$] A.s. the sequence $(W_{n}^{\text{move}}(2))_{n\geq 0}$ is non-decreasing.
\end{itemize}
\end{lemma}}

\subsection{An auxiliary process}

The fact that the interior part of $H_n$ avoids the PPP $\mathcal{N}$ provides information {(coming from past steps)} on which the next steps of the joint exploration process $\{(g_n(\u_1),\ldots, g_n(\u_k), H_n): n\geq 0\}$ depend. {This dependence phenomenon is the main obstacle to the study of the joint exploration process since} it kills all direct Markov properties.

A first tool to deal with this difficulty consists in the use of an auxiliary discrete time process $\{\widetilde{g}_n(\u_1),\ldots, \widetilde{g}_n(\u_k), \widetilde{H}_n: n\geq 0\}$ starting from the same initial configuration $(\u_1,\ldots,\u_k,H_0)$. This new exploration process obeys the same evolution rule as the original one{-- this is the claim of Proposition \ref{prop:HistMarkov} below --but} each move uses a new PPP on $\R^2$, independent of those previously used. The use of independent PPP's at each move will be very useful to exhibit independent r.v.'s in the sequel. This amounts to throwing at each step of the construction a new PPP outside the region already explored, namely the dependence set. This technique was already used in \cite{BB07} without being clearly stated.

Let us explain this more precisely. Consider a collection $\{\NN_n : n \in \N\}$ of i.i.d. Poisson point processes on $\R^2$, independent of the original process $\NN$ from which $\{g_{n}(\u_1),\ldots,g_{n}(\u_k), H_n: n\geq 0\}$ is defined. Set $\widetilde{g}_0(\u_i)= \u_i$ for $1\leq i \leq k$, $\widetilde{H}_0 = H_0$.
\begin{itemize}
\item[(i)] $\widetilde{W}_{0}^{\text{move}}:= \text{argmin}\{\w(1): \w \in \{\u_1, \ldots, \u_k\} \mbox{ and } \w(2)=\tilde{r}_{0}\}$ where $\tilde{r}_{0}:= \min\{\u_1(2), \ldots, \u_k(2)\}$, and $\widetilde{W}_{0}^{\text{stay}}:= \{\u_1, \dots , \u_k\} \setminus \widetilde{W}_{0}^{\text{move}}$;
\item[(ii)] For $1\leq i \leq k$,
\begin{align*}
\widetilde{g}_{1}(\u_i) :=
\begin{cases}
h \bigl (\widetilde{g}_{0}(\u_i), (\NN_1 \setminus \widetilde{H}_0)\cup \widetilde{W}_0^{\text{stay}}\bigr) &\text{ if } \widetilde{g}_{0}(\u_i) = \widetilde{W}_0^{\text{move}}\\
\widetilde{g}_0(\u_i)&\text{ otherwise}.
\end{cases}
\end{align*}
\end{itemize}

We use the PPP $\mathcal{N}_1\setminus \widetilde{H}_0$ to construct $\widetilde{g}_1(\u_1),\cdots \widetilde{g}_1(\u_k)$.
The history set $\widetilde{H}_{1} = \widetilde{H}_{1}(\u_1,\ldots, \u_k)$ after the first move is defined as:
$$
\widetilde{H}_{1} := \Big( \widetilde{H}_0 \cup {B^+(\widetilde{W}_{0}^{\text{move}},\|h(\widetilde{W}_{0}^{\text{move}})-\widetilde{W}_{0}^{\text{move}}\|_2)} \cap \mathbb{H}^+(\widetilde{W}_{1}^{\text{move}}(2)) ~,
$$
where $\widetilde{W}_{1}^{\text{move}}(2):=\min\{\widetilde{g}_{1}(\u_i)(2): 1\leq i \leq k\}$.

Conditional on $(\widetilde{g}_n(\u_1),\ldots,\widetilde{g}_n(\u_k), \widetilde{H}_n)$ let
\begin{itemize}
\item[(i)] $\widetilde{W}_{n}^{\text{move}}:= \text{argmin}\{\w(1): \w \in \{\widetilde{g}_n(\u_1), \ldots, \widetilde{g}_n(\u_k)\} \mbox{ and } \w(2)=\tilde{r}_{n}\}$ where $\tilde{r}_{n} := \min\{\widetilde{g}_n(\u_1)(2), \ldots, \widetilde{g}_n(\u_k)(2)\}$, and $\widetilde{W}_{n}^{\text{stay}}:= \{\widetilde{g}_n(\u_i): 1 \leq i \leq k\} \setminus \widetilde{W}_{n}^{\text{move}}$;
\item[(ii)] For $1\leq i \leq k$,
\begin{align*}
\widetilde{g}_{n+1}(\u_i) :=
\begin{cases}
h \bigl(\widetilde{g}_{n}(\u_i),(\NN_{n+1} \setminus \widetilde{H}_n)\cup \widetilde{W}_n^{\text{stay}} \bigr) &\text{ if } \widetilde{g}_{n}(\u_i) = \widetilde{W}_n^{\text{move}}\\
\widetilde{g}_n(\u_i) & \text{ otherwise}.
\end{cases}
\end{align*}
\end{itemize}
{Note that, to get $\widetilde{g}_{n+1}(\u_i)$ in the above definition, we re-sample the PPP only outside the explored region, i.e. with $\NN_{n+1} \setminus \widetilde{H}_n$, since the PPP $\NN_{n+1}$ may have points in $\widetilde{H}_n$. This precaution was not required for the original exploration process since it uses at each step the same PPP $\NN$ which avoids the current history set $H_n$.}

The joint history set $\widetilde{H}_{n+1}=\widetilde{H}_{n+1}(\u_1,\ldots,\u_k)$ at the $(n+1)$-th move is given by:
$$
\widetilde{H}_{n+1} := \Big( \widetilde{H}_{n} \cup {B^+(\widetilde{W}_{n}^{\text{move}},\|h(\widetilde{W}_{n}^{\text{move}})-\widetilde{W}_{n}^{\text{move}}\|_2)} \Big) \cap \mathbb{H}^+(\widetilde{W}_{n+1}^{\text{move}}(2)) ~,
$$
where $\widetilde{W}_{n+1}^{\text{move}}(2)=\min\{\widetilde{g}_{n+1}(\u_i)(2): 1\leq i \leq k\}$.

\begin{prop}
\label{prop:HistMarkov}
{Under Assumption (\ref{AvoidingH0}),} the joint exploration process $\{(g_n(\u_1),\ldots,g_n(\u_k),H_n) : n \geq 0\}$ and the auxiliary exploration process $\{(\widetilde{g}_n(\u_1),\ldots,\widetilde{g}_n(\u_k),\widetilde{H}_n) : n \geq 0\}$ are identically distributed.
\end{prop}

\begin{proof}
Let $\{\NN_n : n \in \N\}$ be a collection of i.i.d. Poisson processes on $\R^2$. We work conditional on $(g_n(\u_1),\ldots, g_n(\u_k),H_n) = (\x_1,\ldots,\x_k,\Lambda_n)$, for some $\x_1,\ldots,\x_k \in \R^{2}$ and $\Lambda_n \subset \R^2$, on $\{(g_j(\u_1),\ldots, g_j(\u_k),H_j) : j<n\} $ and on $\mathcal{N}\cap H_0=\emptyset$. The region $\mathbb{H}^+(\min\{\x_i(2): 1 \leq i \leq k\}) \setminus \Lambda_n$ has not been explored yet and the Poisson point process $\NN$ on this region can be replaced with any independent Poisson point process $\mathcal{N}_{n+1}$. Thus, we have in distribution that:
$$
 g_{n+1}(\u_i)
\stackrel{d}{=} \left\{
\begin{array}{ll}
h \bigl(\x_i, (\NN_{n+1}\setminus \Lambda_n)\cup \{\x_1,\ldots,\x_{k}\} \bigr) & \mbox{ if }\, \x_i = W_n^{\text{move}}\\
\x_i & \mbox{ otherwise.}
\end{array}
\right.
$$
For $\x_i=W_n^{\text{move}}$, setting $\x^\prime_i:= h \bigl( \x_i, (\NN_{n+1}\setminus \Lambda_n)\cup \{\x_1,\ldots,\x_{k}\}\bigr) $, we have
$$
H_{n+1} \stackrel{d}{=} \left( B^{+}(\x_i,\|\x_i - \x^\prime_i\|_2) \cup \Lambda_n \right) \cap \mathbb{H}^{+} \big( \x'_{i}(2) \wedge \min\{ \x_j(2) : j\not= i \} \big) ~.
$$
Hence, the original joint exploration process and the auxiliary one have the same transition probabilities. They are identically distributed.

Moreover, conditional on $\mathcal{F}_n$, the process  $(g_{n+1}(\u_1),\ldots, g_{n+1}(\u_k),H_{n+1})$ admits a random mapping representation of the form
$$
(g_{n+1}(\u_1),\ldots, g_{n+1}(\u_k),H_{n+1}) \stackrel{d}{=} f((g_{n}(\u_1),\ldots, g_{n}(\u_k),H_n), \NN_{n+1})
$$
for some measurable mapping $f$. This gives its Markovian character (see \cite{LPW08}).
\end{proof}

\section{Good steps}
\label{sect:GoodStep}

Let us define the height of any non empty bounded subset $\Delta$ of $\R^2$, as
$$
L(\Delta) := \sup \{\y(2)-\x(2) : \x,\y\in\Delta\}
$$
and $L(\emptyset)=0$. The goal of this section, i.e. Proposition \ref{propo:Tau_tail}, consists in stating that the height of the history set $L(H_{n})$ returns regularly under a given positive {integer} $\kappa$ which will be specified later.

Precisely, let us set $\tau_0=\tau_0(\u_1,\ldots,\u_k)=0$ and for $j\geq 1$,
\begin{equation}
\label{def:tau_j}
\tau_j = \tau_j(\u_{1},\ldots, \u_k) := \inf \left\lbrace kn > \tau_{j-1} : \begin{array}{c}
n\geq 1, \; L(H_{kn}) \leq \kappa \; \mbox{ and } \\
W^{\text{move}}_{kn}(2) \geq W^{\text{move}}_{\tau_{j-1}}(2)+\kappa+1
\end{array} \right\rbrace ~.
\end{equation}
Such a step is called a \textit{good step} of the joint process $\{(g_n(\u_1),\ldots,g_n(\u_k),H_n) : n \geq 0\}$. At a good step, the height of the history set is at most $\kappa$. The condition that $W^{\text{move}}_{\tau_{j}}(2)-W^{\text{move}}_{\tau_{j-1}}(2)$ should be more than $\kappa+1$ is to ensure that the history regions involved at different good steps are disjoint. As additional and technical requirements, $\tau_{j}$ has to be a multiple of the number $k$ of trajectories. This condition portends that in the sequel we will consider blocks of $k$ consecutive steps. Let us also remark that the $\tau_j$'s are stopping times w.r.t. the filtration $(\mathcal{F}_n)_{n\geq 0}$.

In Section \ref{sect:Renewal}, we will select some suitable (in some sense) good steps and will call them \textit{renewal steps}. \\

Only for this section, we will work with the auxiliary exploration process $\{(\widetilde{g}_n(\u_1),\ldots,\widetilde{g}_n(\u_k),\widetilde{H}_n): n \geq 0\}$ instead of the (original) joint exploration process, and for ease of notation, we denote this process itself by $\{(g_n(\u_1),\ldots,g_n(\u_k),H_n) : n \geq 0\}$.\\

{Proposition \ref{propo:Tau_tail} holds whenever the following conditions on the initial configuration $(\u_1,\ldots,\u_k,H_0)$ are satisfied.
\begin{itemize}
\item[(\textbf{H1})] \textbf{Shape of $H_0$.} The initial history set $H_0$ is a compact set defined as the intersection with $\mathbb{H}^+(W_0^{\text{move}}(2))$ of a finite number of closed balls whose centers are in $\mathbb{H}^{-}(W_0^{\text{move}}(2))$. Moreover, the height of $H_0$ is such that $L(H_0) \leq \kappa$.
\item[(\textbf{H2})] \textbf{Locations of $\u_1,\ldots,\u_k$.} The starting points $\u_1,\ldots,\u_k$ are deterministic, they belong to the closure of $\mathbb{H}^+(W_0^{\text{move}}(2)) \setminus H_0$ and satisfy
$$
\max\{\u_1(2),\ldots,\u_k(2)\} \leq W_0^{\text{move}}(2) + \kappa ~.
$$
\end{itemize}}

Roughly speaking, Assumption (\textbf{H1}) says that the initial history set $H_0$ is a finite union of balls intersected with the half-space $\mathbb{H}^+(W_0^{\text{move}}(2))$ and whose height is bounded by $\kappa$. Assumption (\textbf{H2}) requires that all the information associated to the initial configuration is contained in a strip of height $\kappa$, which is a little bit more than avoiding the interior part of $H_0$: the reason for this will appear clearly in the proof of Lemma \ref{lem:UnexploredCone}.

{Mainly two types of initial configurations will be considered. Either $H_0=\emptyset$-- this case is covered by (\textbf{H1}) --and the starting points are deterministic with possibly the same ordinate (as in Section \ref{sec:Tail}). Or $H_0\not=\emptyset$ and the $\u_i$'s are located on the boundary of $\mathbb{H}^+(W_0^{\text{move}}(2)) \setminus H_0$. This second type exactly corresponds to configurations obtained at a good step. An example of this second type is given by $(g_6(\u_1),g_6(\u_2),g_6(\u_3),H_6)$ in Figure \ref{Fig:gn}.}\\

{From now on, we assume that (\textbf{H1}) and (\textbf{H2}) hold.} Proposition \ref{propo:Tau_tail} states that the number of steps between two consecutive good steps can be stochastically dominated by a r.v. having exponential decay.

\begin{proposition}
\label{propo:Tau_tail}
Let $j\geq 0$. There exists a r.v. $T$ whose distribution does not depend on $\mathcal{F}_{\tau_j}$ such that, for all $n$,
\begin{equation}
\label{Tau_tail}
\P\big(\tau_{j+1}-\tau_j\geq n \ |\ \mathcal{F}_{\tau_j}\big)\leq \P \big( T \geq n \big) \leq C_0 e^{-C_1 n} ~.
\end{equation}
\end{proposition}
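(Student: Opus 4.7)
Plan: I would combine the Markov property of the joint process (Proposition~\ref{prop:HistMarkov}) with a positive-probability favorable-configuration argument on blocks of $k$ consecutive steps. Since $\tau_j$ is a good step, $L(H_{\tau_j}) \leq \kappa$, so the $k$ positions $g_{\tau_j}(\u_i)$ all lie in the strip $\R \times [W_{\tau_j}^{\text{move}}(2), W_{\tau_j}^{\text{move}}(2) + \kappa]$. I would fix $C > 1$ and $\epsilon \in (0, \kappa/3)$ depending only on $\kappa$, set $\x_i^* := (g_{\tau_j}(\u_i)(1), W_{\tau_j}^{\text{move}}(2) + \kappa + C)$ and the target ball $T_i := B(\x_i^*, \epsilon)$ for each $i$, and enumerate $i_1, \ldots, i_k$ the indices $i$ by increasing value of $g_{\tau_j}(\u_i)(2)$ (ties broken by abscissa). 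My favorable event $E$ would then be: for each $n \in \{1, \ldots, k\}$, the fresh PPP $\mathcal{N}_{\tau_j+n}$ contains at least one point in $T_{i_n}$ and no points in
\[
B^+\bigl(g_{\tau_j}(\u_{i_n}), \|g_{\tau_j}(\u_{i_n}) - \x_{i_n}^*\|_2 + \epsilon\bigr) \setminus \bigl(T_{i_n} \cup H_{\tau_j+n-1}\bigr).
\]

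On $E$, the target heights all exceed those of the unmoved particles (since $C > 0$), so the moving order coincides with $i_1, \ldots, i_k$ and at each step the only admissible ancestor of the moving vertex is a point of $T_{i_n}$. A short geometric computation (jumps are nearly vertical because horizontal offsets are $\leq \epsilon$ while vertical offsets are $\geq C$) will yield $W_{\tau_j+k}^{\text{move}}(2) \geq W_{\tau_j}^{\text{move}}(2) + \kappa + C - \epsilon$ and $L(H_{\tau_j+k}) \leq 2\epsilon + \epsilon^2/(2C)$; choosing $(C, \epsilon)$ so that $C - \epsilon \geq 1$ and $2\epsilon + \epsilon^2/(2C) \leq \kappa$ will ensure that both good-step conditions hold at step $\tau_j + k$, whence $\tau_{j+1} \leq \tau_j + k$. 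By independence of the $\mathcal{N}_n$'s and translation invariance of the PPP, each of the $k$ single-step conditions in $E$ has probability at least $(1 - e^{-\lambda \pi \epsilon^2}) e^{-\lambda \pi (\kappa + C + \epsilon)^2/2}$ conditionally on its predecessors and on $\mathcal{F}_{\tau_j}$, giving $\P(E \mid \mathcal{F}_{\tau_j}) \geq p > 0$ uniformly in $\mathcal{F}_{\tau_j}$.

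Iterating via Proposition~\ref{prop:HistMarkov} block by block will yield $\P(\tau_{j+1} - \tau_j > mk \mid \mathcal{F}_{\tau_j}) \leq (1-p)^m$, and taking $T := kG$ with $G \sim \mathrm{Geom}(p)$ independent of $\mathcal{F}_{\tau_j}$ will dominate $\tau_{j+1} - \tau_j$ in distribution and yield~(\ref{Tau_tail}). The hard part will be to handle two technical obstacles. First, unmoved particles of $W^{\text{stay}}$ may happen to lie in the relevant semi-ball and short-circuit the ancestor search at one of the $k$ moves; I would handle this by refining $E$ so that such close-by stationary vertices are absorbed into prescribed coalescences, and verify that the probability lower bound is preserved. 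Second, after a failed trial the state at step $\tau_j + k$ may violate $L(H) \leq \kappa$, weakening the direct lower bound on $p$ for the next block. For this I would enlarge $E$ into a two-stage construction that first spends a bounded number of extra steps driving the particles upward --- so that old semi-balls are trimmed by the $\mathbb{H}^+$-intersection in the recursive definition of $H_n$ and the inequality $L(H) \leq \kappa$ is restored --- before applying the main construction; the monotonicity of $W_n^{\text{move}}(2)$ will make this recovery itself happen with positive probability bounded uniformly below, and this recovery step is the delicate point of the proof.
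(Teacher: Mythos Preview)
Your favorable event $E$ at a good step is sound: on $E$ the $k$ nearly-vertical jumps do yield $W^{\text{move}}_{\tau_j+k}(2)\geq W^{\text{move}}_{\tau_j}(2)+\kappa+1$ and $L(H_{\tau_j+k})\leq\kappa$, and $\P(E\mid\mathcal{F}_{\tau_j})\geq p>0$ uniformly. The difficulty you flag under ``second obstacle'' is real, however, and your proposed fix does not close it.

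The gap is in the iteration ``block by block''. The inequality $\P(\tau_{j+1}-\tau_j>mk\mid\mathcal{F}_{\tau_j})\leq(1-p)^m$ would require that, conditionally on the first trial failing, a fresh trial from the state at step $\tau_j+k$ still succeeds with probability at least $p$. But your lower bound on $\P(E\mid\cdot)$ uses crucially that the current history height is at most $\kappa$, so that the $k$ exclusion semi-balls have radii at most $\kappa+C+\epsilon$. On $E^c$ the state at step $\tau_j+k$ has no such control: one of the $k$ moves may have produced a semi-ball of arbitrarily large radius, so $L(H_{\tau_j+k})$ is unbounded. Your proposed recovery---``a bounded number of extra steps driving the particles upward''---cannot restore $L(H)\leq\kappa$ with probability bounded away from zero \emph{uniformly} over the post-failure state: if $L(H)=L$, any scheme that erases the old semi-balls in a fixed number $N$ of steps must produce at least one jump of length $\gtrsim L/N$, and forcing such a jump requires the PPP to avoid a fresh region of area $\Theta(L^2/N^2)$ (the history set need not cover a positive fraction of the new semi-ball), which has probability $e^{-\Theta(L^2)}\to 0$ as $L\to\infty$.

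What is actually needed is a quantitative control of $L(H_n)$ between good steps: an upper bound on its growth at each step (with exponential tails) together with a drift event, of uniform positive probability, on which $L(H_n)$ decreases by at least a fixed amount. This is precisely the content of the paper's Lemmas~\ref{lem:IncreaseLn} and~\ref{lem:EnLn}, which feed into a dominating Markov chain $(M_n)$ with a Foster--Lyapunov bound (Lemma~\ref{lem:M_tail}) giving exponential tails for its return time to $\kappa$. Your construction of $E$ could replace the paper's Lemma~\ref{lem:EnLn} as the ``good'' drift event, but you still need the companion control on the growth of $L(H)$ during bad blocks, and a Lyapunov-type argument to combine them; the recovery cannot be done in a state-independent bounded number of steps.
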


We will prove Proposition \ref{propo:Tau_tail} through a sequence of lemmas. To understand how our proof is organized, we start with describing the evolution of the height of the history set during a single step. Two situations may actually occur. If the semi-ball $B^{+}(W^{\text{move}}_{n},\|h(W^{\text{move}}_{n}) - W^{\text{move}}_{n}\|_2)$ created during the $(n+1)$-th move, does not exceed the horizontal line $\{\x : \x(2)=W^{\text{move}}_{n}(2)+L(H_{n})\}$ then
$$
L(H_{n+1}) = L(H_{n}) - \big( W^{\text{move}}_{n+1}(2) - W^{\text{move}}_{n}(2) \big) <L(H_n) ~.
$$
In this case, the height of the history set is decreasing and, on some suitable events (occurring with positive probability), we will be able to quantify its decrease. See Lemmas \ref{lem:ProbIn}, \ref{lem:E_nW_n} and \ref{lem:EnLn}.

Otherwise, the new height $L(H_{n+1})$ is realized by the last created semi-ball $B^{+}(W^{\text{move}}_{n},\|h(W^{\text{move}}_{n}) - W^{\text{move}}_{n}\|_2)$ and
$$
L(H_{n+1}) = \|h(W^{\text{move}}_{n}) - W^{\text{move}}_{n}\|_2 - \big( W^{\text{move}}_{n+1}(2) - W^{\text{move}}_{n}(2) \big) ~.
$$
In this second case, the height of the history set may increase or not. A priori, a large distance $\|h(W^{\text{move}}_{n}) - W^{\text{move}}_{n}\|_2$ should occur with small probability since this would force the PPP to avoid the (large) semi-ball $B^{+}(W^{\text{move}}_{n},\|h(W^{\text{move}}_{n}) - W^{\text{move}}_{n}\|_2)$. However, a large part of that semi-ball can be already covered by the history set $H_{n}$, which by definition avoids the PPP. In this case, having a large distance $\|h(W^{\text{move}}_{n}) - W^{\text{move}}_{n}\|_2$ becomes quite possible. Lemmas \ref{lem:UnexploredCone} and \ref{lem:IncreaseLn} allow us to overcome this obstacle and to control the growth of $L(H_n)$.

In both situations, the sequence $\{L(H_n) : n\geq 0\}$ satisfies the following fundamental and useful induction relation: a.s. and for any $n$,
\begin{equation}
\label{InductionLHn}
L(H_{n+1}) \leq \max\{ L(H_{n}), \|h(W^{\text{move}}_{n}) - W^{\text{move}}_{n}\|_2 \} ~.
\end{equation}

At the end of this section, we will combine these results in Lemmas \ref{lem:MngeqLn} and \ref{lem:M_tail} to get Proposition \ref{propo:Tau_tail}.

\subsection{How much is $L(H_n)$ increasing?}

Let us introduce some notations. For a real number $l>0$ and an integer $n\geq 0$, let us set
$$
g^{\uparrow,l}_n := W_n^{\text{move}} + (0,l)
$$
(recall that $W_n^{\text{move}}$ a.s. denotes a single point). Let
$$
C_{\pi/2}(\mathbf{0}) := \{ re^{\i\theta} :\, r > 0 , \, \theta \in [\pi/4,3\pi/4] \}
$$
be the cone with apex $\bf{0}$ and making an angle $\pi/4$ with the vertical axis. We also define, for $\x\in\R^2$, $C_{\pi/2}(\x):=\x + C_{\pi/2}(\mathbf{0})$.

Conditional on the current configuration $(g_{n}(\u_1),\ldots,g_{n}(\u_k), H_{n})$, the next lemma exhibits deterministic regions avoiding the history set $H_{n}$. Such regions are unexplored yet and will allow us to control how the history set grows (see Lemma \ref{lem:IncreaseLn}). Notice that Baccelli and Bordenave used in \cite[Lemma 4.2]{BB07} a similar geometric argument, which is false. Actually, it is impossible to exhibit a cone, with a positive and deterministic angle and with apex at the moving vertex $W_n^{\text{move}}$, which almost surely avoids the history set $H_n$. To get such a property, the cone has to be pushed upward and this is what we do with $g^{\uparrow,l}_n$.

\begin{lemma}
\label{lem:UnexploredCone}
For all $n\geq 0$ and for any $l\geq L(H_n)/2$, the cone $C_{\pi/2}(g^{\uparrow,l}_n)$ a.s. avoids the history set $H_n$, i.e. $C_{\pi/2}(g^{\uparrow,l}_n)\cap H_n = \emptyset$.
\end{lemma}

Remark that although the unexplored cone $C_{\pi/2}(g^{\uparrow,L(H_n)/2}_n)$ avoids the history set $H_n$, it could contain a starting point $g_n(\u_i)=\u_i$ which has not moved yet {(until step $n$)} and could still be outside $H_n$.

\begin{proof}[Proof of Lemma \ref{lem:UnexploredCone}]
{Let $l\geq L(H_n)/2$. By definition of the history set $H_n$, we have to check that the cone $C_{\pi/2}(g^{\uparrow,l}_n)$ avoids each semi-ball $B^{+}(W_m^{\text{move}},\|W_m^{\text{move}}-h(W_m^{\text{move}})\|_{2})$ created at a previous step $m<n$ and each semi-ball contributing to $H_0$ (recall Assumption (\textbf{H1})). Let us denote by $B^{+}(A,R)$ such generic semi-ball.}

By translation and symmetry, we can assume without loss of generality that $g^{\uparrow,l}_n=(0,0)$. So $W_n^{\text{move}}=(0,-l)$. {Here, we use in a crucial way that $W_n^{\text{move}}$ belongs to the boundary of $\mathbb{H}^+(W_n^{\text{move}}(2)) \setminus H_n$, i.e. Assumption (\textbf{H2}). Also, by Assumption (\textbf{H1}), $A(2)\leq W_n^{\text{move}}(2)=-l$ and $B^{+}(A,R)$ is below the horizontal line with ordinate $l$ as $l\geq L(H_n)/2$.} So, the worst case is obtained when the semi-ball $B^{+}(A,R)$ realizes the height $L(H_n)$ and is tangent to $W_n^{\text{move}}$ with a maximal ordinate $A(2)$, i.e. $A=(W_n^{\text{move}}(1)+2l,W_n^{\text{move}}(2))=(2l,-l)$ and $R=2l$. See Figure \ref{fig:unexplored} for an illustration of this worst situation.

Finally, an elementary geometric computation allows to conclude. If the cone $C_{\pi/2}(g^{\uparrow,l}_n)$ overlaps $B^{+}(A,2l)$ then the point $M=(l/2,l/2)$ has to belong to $B^{+}(A,2l)$ since it is the closest point to $A$ in the cone. But $\|A-M\|_{2}^{2}=18l^{2}/4>(2l)^{2}$. So this concludes the proof.
\end{proof}

For $n\geq 0$, we denote by $\zeta_{n+1}$ the distance between $g^{\uparrow,L(H_n)/2}_n$ and its nearest Poisson point inside the unexplored cone $C_{\pi/2}(g^{\uparrow,L(H_n)/2}_n)$:
\begin{equation}
\label{zeta(1)_n}
\zeta_{n+1} := \inf \left\{ \|g^{\uparrow,L(H_n)/2}_n - \x\|_2 : \, \x \in {\cal N}_{n+1} \cap C_{\pi/2}(g^{\uparrow,L(H_n)/2}_n) \right\} ~.
\end{equation}
As we will consider blocks of $k$ consecutive steps in the sequel, let us introduce for $n\geq 0$,
\begin{equation}
\label{def:X1}
X_{n+1} := \sum_{j = 1}^{k} \left( \lfloor 2 \zeta_{kn+j} \rfloor + {1} \right) ~.
\end{equation}
{The random variable $X_{n+1}$ is an integer-valued random variable and the reason for choosing $\lfloor 2 \zeta_{kn+j} \rfloor + 1$ will appear in the proof of Lemma \ref{lem:IncreaseLn}. Later, this integer valued random variable $X_{n+1}$ will be used to construct a discrete state space Markov chain to dominate the `height' process $L(H_n)$.}

\begin{figure}[!ht]
\begin{center}
\psfrag{x}{$\small{W^{\text{move}}_n}$}
\psfrag{y}{$\small{\x}$}
\psfrag{z}{$\small{L(H_{n})}$}
\psfrag{m}{$\small{\zeta_{n+1}}$}
\psfrag{n}{$\small{C_{\pi/2}(\x)}$}
\includegraphics[width=13.2cm,height=4.4cm]{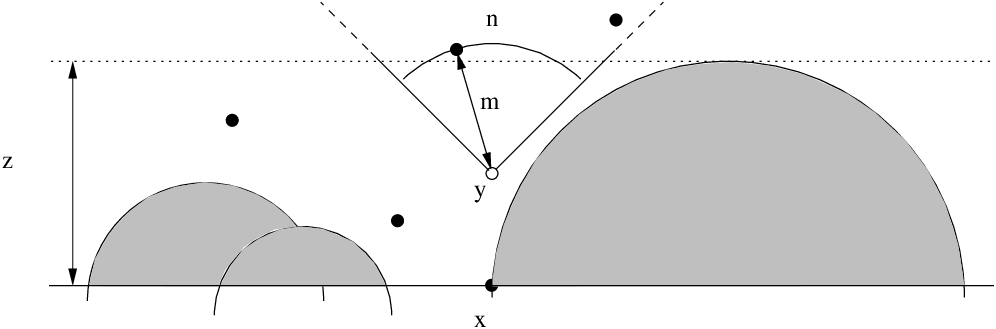}
\caption{\label{fig:unexplored} {\small \textit{Black vertices are Poisson points. The gray area corresponds to the history set $H_{n}$. The white point-- denoted by $\x$ --is $g^{\uparrow,L(H_n)/2}_n$. The cone with apex $\x$ and bisector the ordinate axis is the unexplored cone and avoids the history set $H_{n}$.}}}
\end{center}
\end{figure}

The next result says that when the height of the history set increases between steps $kn$ and $k(n+1)$ then the new height $L(H_{k(n+1)})$ is bounded from above by the r.v. $X_{n+1}$ which admits an exponential tail.

\begin{lemma}
\label{lem:IncreaseLn}
Using the previous notations:
\begin{itemize}
\item[(i)] For all $n \geq 0$, the following inequality holds with probability $1$:
$$
L(H_{k(n+1)}) \mathbf{1}_{\{L(H_{k(n+1)}) > L(H_{kn})\}} \leq X_{n+1} ~.
$$
\item[(ii)] The r.v.'s $\{X_{n+1} : n \geq 0\}$ are i.i.d. and satisfy $\forall n,m \geq 0$,
\begin{equation}
\label{X^(1)ExpoTail}
\P( X_{n+1} > m ) \leq C_0 e^{-C_1 m} ~.
\end{equation}
\end{itemize}
\end{lemma}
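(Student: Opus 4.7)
The plan is to bound $L(H_{k(n+1)})$ on the event $\{L(H_{k(n+1)}) > L(H_{kn})\}$ by iterating the one-step induction \eqref{InductionLHn} together with the cone-avoidance property of Lemma \ref{lem:UnexploredCone}; the i.i.d.\ structure and exponential tail of $X_{n+1}$ will then follow from the homogeneity of the Poisson point process and the use of a fresh PPP $\NN_{m+1}$ at each step of the auxiliary construction.

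For part (i), fix $m \in \{kn, \ldots, k(n+1)-1\}$ and apply Lemma \ref{lem:UnexploredCone} with $l := L(H_m)/2 \vee 1$: the cone $C_{\pi/2}(g^{\uparrow, l}_m)$ is disjoint from $H_m$, so the point of $\NN_{m+1}$ realising $\zeta_{m+1}$ is a legitimate candidate for the ancestor of $W_m^{\text{move}}$. The triangle inequality yields
\begin{equation*}
\|h(W_m^{\text{move}}) - W_m^{\text{move}}\|_2 \leq l + \zeta_{m+1} = \bigl( L(H_m)/2 \vee 1 \bigr) + \zeta_{m+1}.
\end{equation*}
Setting $A_m := L(H_m)/2 \vee 1 \geq 1$, so that $L(H_m) \leq 2 A_m$, the induction \eqref{InductionLHn} gives
\begin{equation*}
L(H_{m+1}) \leq \max\bigl(2A_m,\ A_m + \zeta_{m+1}\bigr) \leq 2\max(A_m, \zeta_{m+1}),
\end{equation*}
hence $A_{m+1} \leq \max(A_m, \zeta_{m+1})$ (using $A_m \geq 1$). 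Iterating from $m=kn$ to $m=k(n+1)-1$ produces
\begin{equation*}
L(H_{k(n+1)}) \leq 2 \max\bigl(A_{kn},\, \zeta_{kn+1},\, \ldots,\, \zeta_{k(n+1)}\bigr).
\end{equation*}
If the maximum on the right is $A_{kn}$, then $L(H_{k(n+1)}) \leq L(H_{kn}) \vee 2$; on the event $\{L(H_{k(n+1)}) > L(H_{kn})\}$ this forces $L(H_{kn}) < 2$, whence $L(H_{k(n+1)}) \leq 2 \leq 2k \leq X_{n+1}$. Otherwise the maximum is some $\zeta_{kn+j^\ast}$ and then $L(H_{k(n+1)}) \leq 2 \zeta_{kn+j^\ast} \leq \lfloor 2 \zeta_{kn+j^\ast}\rfloor + 2 \leq X_{n+1}$. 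This settles (i).

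For part (ii), conditionally on $(\NN_1, \ldots, \NN_m, H_0, \NN_0^{\text{extra}})$ the apex $g^{\uparrow, L(H_m)/2 \vee 1}_m$ is deterministic, and $\zeta_{m+1}$ is the distance from this apex to the nearest point of the fresh, independent process $\NN_{m+1}$ inside a cone of aperture $\pi/2$. By homogeneity this distance has the fixed law
\begin{equation*}
\P(\zeta_{m+1} > r) = e^{-\lambda \pi r^2 / 4}, \qquad r \geq 0,
\end{equation*}
independently of the past. Hence $\{\zeta_m\}_{m \geq 1}$ is an i.i.d.\ sequence with Gaussian tails. Since the $X_{n+1}$'s are built from disjoint blocks of $k$ consecutive $\zeta$'s, they are i.i.d.\ as well, and a union bound $\P(X_{n+1} > m) \leq k\,\P(\lfloor 2\zeta_1\rfloor + 2 > m/k)$ delivers the desired bound $C_0 e^{-C_1 m}$ for suitable constants depending only on $\lambda$ and $k$.

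The delicate ingredient is really the iterated geometric estimate of part (i): one must carry the `$\vee 1$' inside the definition of $A_m$ to match the threshold of Lemma \ref{lem:UnexploredCone}, and then separately handle the degenerate subcase $L(H_{kn}) < 2$ in the final case analysis. Once this bookkeeping is settled, the probabilistic content of part (ii) is essentially free, courtesy of the fresh Poisson processes used in the auxiliary construction.
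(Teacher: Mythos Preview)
Your proof is correct and follows the same overall architecture as the paper: use Lemma~\ref{lem:UnexploredCone} to guarantee that the nearest cone point is an admissible candidate for the ancestor, derive a one-step recursion bounding $L(H_{m+1})$ in terms of $L(H_m)$ and $\zeta_{m+1}$, then iterate over the $k$ steps of the block. The i.i.d.\ structure and tail bound in~(ii) are handled exactly as in the paper.

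The one genuine difference lies in the one-step bound. The paper proves the slightly sharper inequality $L(H_{n+1}) \leq \max\{L(H_n), \lfloor 2\zeta_{n+1}\rfloor + 2\}$ (labelled \eqref{InductionZeta}) by splitting into two cases: when $L(H_n)<2$ it uses precisely your triangle-inequality argument, but when $L(H_n)\geq 2$ and $L(H_{n+1})>L(H_n)$ it instead observes the geometric inclusion $B^+(g_n^{\uparrow,L(H_n)/2},\|h-W\|_2/2)\subset B^+(W_n^{\text{move}},\|h-W\|_2)$, which directly lower-bounds $\zeta_{n+1}$ by $\|h-W\|_2/2$. You avoid this nested-ball step entirely by running the triangle inequality uniformly through the auxiliary quantity $A_m=L(H_m)/2\vee 1$, at the price of carrying the ``$\vee 2$'' correction into the final case analysis. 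Your route is arguably more elementary; the paper's yields the cleaner recursion \eqref{InductionZeta}, which it reuses later (e.g.\ in the proof of Proposition~\ref{prop:IIDMaxSubExpTail}).
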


\begin{proof}
{Let us first show $(i)$. For a single trajectory (i.e. $k=1$), we have a.s.
\begin{equation}
\label{Item(i)k=1}
L(H_{n+1}) \mathbf{1}_{\{L(H_{n+1}) > L(H_{n})\}} \leq \lfloor 2\zeta_{n+1}\rfloor + 1 ~.
\end{equation}
This implies that $L(H_{n+1}) \leq \max\{ L(H_{n}),\lfloor 2\zeta_{n+1}\rfloor +1\}$. Applying this inequality $k$ times leads to Item $(i)$ for any $k\geq 2$.}

Denoting by $\y$ the element of the unexplored cone realizing the r.v. $\zeta_{n+1}$, it follows:
{\begin{eqnarray}
\label{LHnZeta}
\| h(W_n^{\text{move}}) - W_n^{\text{move}} \|_2 & \leq & \| \y - W_n^{\text{move}}\|_2 \nonumber\\
& \leq & \zeta_{n+1} + \| g^{\uparrow,L(H_n)/2}_n - W_n^{\text{move}}\|_2 \nonumber\\
& = & \zeta_{n+1} + L(H_n)/2 ~.
\end{eqnarray}
If $L(H_{n+1}) > L(H_{n})$ then the last created semi-ball increases the history set. So,
$$
\| h(W_n^{\text{move}}) - W_n^{\text{move}}\|_{2} \geq L(H_{n+1}) \geq L(H_n) ~.
$$
With (\ref{LHnZeta}), we get $\zeta_{n+1}\geq L(H_n)/2$ and $L(H_{n+1})\leq 2\zeta_{n+1}$. And (\ref{Item(i)k=1}) follows.}

Item $(ii)$ is mainly based on the independence between the random variables $\zeta_{n+1}$, $n\geq 0$, which is due to the fact that independent PPP's are used for each step of the joint process $\{(g_n(\u_1),\ldots,g_n(\u_k),H_n) : n \geq 0\}$. Moreover, by Lemma \ref{lem:UnexploredCone}, the r.v. $\zeta_{n+1}$'s are i.i.d. with an exponential tail distribution since
$\P(\zeta_{n+1}>r)$ is the probability that there is no Poisson point in $C_{\pi/2}(\mathbf{0})\cap B(\mathbf{0},r)$. The same holds for the $X_{n+1}$'s.
\end{proof}

\subsection{How much is $L(H_n)$ decreasing?}

Now let us show that $(L(H_n))_{n\geq 0}$ is submitted to a `negative drift' so that the sequence regularly returns to small values. {We introduce an event of positive probability on which the ordinate of the moving vertex indeed increases of at least 1 between the $kn$-th and $k(n+1)$-th steps. Working a bit more, we will obtain as a consequence  that the height of the history set decreases by at least 1 on this event if it is greater than $\kappa$.} Notice that such event also allows to control the number of steps needed for the ordinate of the moving vertex to reach a distance at least $\kappa+1$ from the last good step.\\

For $\x\in\R^2$ and for $w,l>0$, the rectangle of width $2w$ and of height $l$, whose base is centered at $\x$, is denoted by
$$
\text{Rec}(\x;w,l) := \x + [-w,w]\times [0,l] ~.
$$
Thus we set
\begin{equation}
\label{def:ln}
l_n := \inf\{l \geq 0: \text{Area}(\text{Rec}(g^{\uparrow,1}_n;1,l) \setminus H_n) \geq 1/2 \} ~.
\end{equation}
In other words, $l_n$ is the random height of the rectangle centered at $W_n^{\text{move}}+(0,1)$ with width $2$ so that the area of its unexplored part becomes at least $1/2$. The justification of the constant $1/2$ in the definition of $l_n$ will appear in the proof of Lemma \ref{lem:E_nW_n}. Besides, the overlap of $\text{Rec}(g^{\uparrow,1}_n;1,L(H_n)/2)$ with the unexplored cone $C_{\pi/2}(g^{\uparrow,L(H_n)/2}_n)$ has area $1$. Thanks to Lemma \ref{lem:UnexploredCone}, this means that a.s.
\begin{equation}
\label{lnBound}
l_n \leq \frac{L(H_n)}{2} ~.
\end{equation}

For any integer $n\geq 0$, $I_{n+1}$ is the indicator random variable defined as
$$
I_{n+1} := \mathbf{1}_{\{(\text{Rec}(g^{\uparrow,1}_n;1,l_n) \setminus H_n) \cap {\cal N}_{n+1}\neq\emptyset \,\mbox{ and }\, \text{Rec}(W^{\text{move}}_n;5,1) \cap {\cal N}_{n+1}=\emptyset \}} ~.
$$
Let us now explain the ideas behind Lemmas \ref{lem:ProbIn}, \ref{lem:E_nW_n} and \ref{lem:EnLn}. First notice that $\text{Rec}(g^{\uparrow,1}_n;1,l_n) \setminus H_n$ and $\text{Rec}(W^{\text{move}}_n;5,1)$ are two disjoint regions with area $1/2$ and $10$ respectively. So the events indicated by the $I_{n+1}$'s all occur with the same fixed positive probability, denoted by $p_0$ in Lemma \ref{lem:ProbIn}. Such an event will be pleasant in the sense that, provided there is no point of $W_n^{\text{stay}}$ in the horizontal rectangle $\text{Rec}(W^{\text{move}}_n ; 5,1)$, the ancestor $h(W_n^{\text{move}})$ advances by at least 1 in ordinate w.r.t. $W_n^{\text{move}}$. Combining this with (\ref{lnBound}) should force the height of the history set to decrease by at least $1$ during the $(n+1)$-th move. However, it can happen that some points of $W_n^{\text{stay}}$ are in $\text{Rec}(W^{\text{move}}_n;5,1)$ (or $\text{Rec}(g^{\uparrow,1}_n;1,l_n)$) as illustrated in Figure \ref{fig:tricky}. In this case, $h(W_n^{\text{move}})\in W_n^{\text{stay}}$ and the increment $h(W_n^{\text{move}})(2)-W_n^{\text{move}}(2)$ cannot be bounded from below. But, this situation corresponds to the coalescence of two paths among the $\pi^{\u_1},\ldots,\pi^{\u_k}$. Here is the reason why we consider blocks of $k$ consecutive steps: on the event $\{\prod_{j=1}^k I_{kn+j}=1\}$ where such pleasant events occur between the $kn$-th and the $k(n+1)$-th steps, the ordinate of the current moving vertex is forced to progress by at least $1$ (Lemma \ref{lem:E_nW_n}) and the history set to decrease by at least $1$ (Lemma \ref{lem:EnLn}).

\begin{figure}[!ht]
\begin{center}
\psfrag{x}{$\small{W^{\text{move}}_n}$}
\psfrag{y}{$\small{Y}$}
\psfrag{z}{$\small{Z}$}
\psfrag{a}{$\small{l_n}$}
\psfrag{b}{$\small{1}$}
\includegraphics[width=5cm,height=6cm]{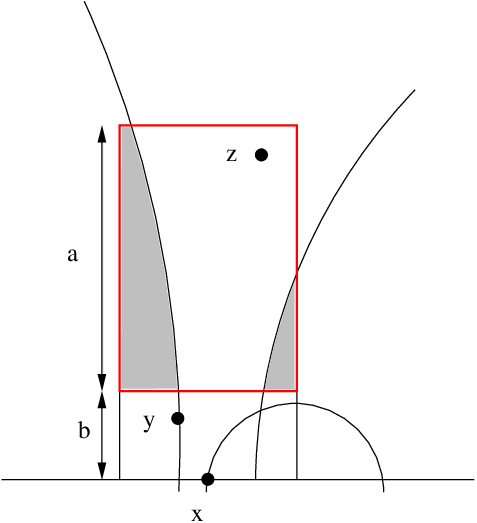}
\caption{\label{fig:tricky} {\small \textit{The red rectangle represents $\text{Rec}(g^{\uparrow,1}_n;1,l_n)$, where $g^{\uparrow,1}_n=W^{\text{move}}_{n}+(0,1)$, which is partially covered by the history set $H_n$ (the gray regions). By definition of $l_n$ the area of $\text{Rec}(g^{\uparrow,1}_n;1,l_n)\setminus H_n$ is equal to $1/2$. The vertex $Y$ is an element of $W^{\text{stay}}_{n}$ while $Z$ is a Poisson point of $\NN_{n+1}$. This picture illustrates the tricky situation occurring in the proof of Lemma \ref{lem:E_nW_n}: although $I_{n+1}=1$, $W^{\text{move}}_{n+1}(2)$ is not larger than $W^{\text{move}}_{n}(2)+1$ since $h(W^{\text{move}}_{n})=Y$.}}}
\end{center}
\end{figure}
\begin{lemma}
\label{lem:ProbIn}
Let $p_0:=(1-e^{-\lambda/2})e^{-10\lambda}>0$ where $\lambda$ denotes the (common) intensity of the Poisson point processes. Then, for any $n\geq 0$,
$$
\P(I_{n} = 1) = p_0 \,\text{ and }\, \P \Big( \prod_{j = 1}^{k} I_{kn + j} = 1 \Big) = p_0^k ~.
$$
\end{lemma}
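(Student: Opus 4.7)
The plan is to condition on the $\sigma$-algebra $\FF_n$ (which encodes $H_n$, $W_n^{\text{move}}$, $l_n$, and all earlier trajectory data) and to exploit the fact that the move at step $n+1$ uses a fresh, independent PPP $\NN_{n+1}$ in the auxiliary construction. Under this conditioning, the two geometric regions appearing in the definition of $I_{n+1}$ are deterministic, so their Poisson point counts behave as an ordinary Poisson sampling on disjoint sets.

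First I would check the two geometric facts behind the value $p_0 = (1-e^{-\lambda/2}) e^{-10\lambda}$. Observe that
\[
\text{Rec}(W^{\text{move}}_n;5,1) = W^{\text{move}}_n + [-5,5]\times[0,1]
\quad\text{while}\quad
\text{Rec}(g^{\uparrow,1}_n;1,l_n) = W^{\text{move}}_n + [-1,1]\times[1,1+l_n],
\]
so these rectangles share only the horizontal edge at ordinate $W^{\text{move}}_n(2)+1$, which has zero Lebesgue measure. In particular $(\text{Rec}(g^{\uparrow,1}_n;1,l_n)\setminus H_n)$ is disjoint from $\text{Rec}(W^{\text{move}}_n;5,1)$ up to a null set. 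Secondly, by the continuity of the Lebesgue measure in the parameter $l$, the infimum in the definition \eqref{def:ln} of $l_n$ is attained, and the area of $\text{Rec}(g^{\uparrow,1}_n;1,l_n)\setminus H_n$ equals exactly $1/2$. The area of $\text{Rec}(W^{\text{move}}_n;5,1)$ is clearly $10$.

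Next, conditionally on $\FF_n$, the regions $A_n := \text{Rec}(g^{\uparrow,1}_n;1,l_n)\setminus H_n$ and $B_n := \text{Rec}(W^{\text{move}}_n;5,1)$ are deterministic Borel sets. Since $\NN_{n+1}$ is independent of $\FF_n$ by the auxiliary construction of Section~\ref{sect:JointProcess}, and since $A_n$ and $B_n$ are disjoint, the point counts $\NN_{n+1}(A_n)$ and $\NN_{n+1}(B_n)$ are conditionally independent Poisson random variables with means $\lambda/2$ and $10\lambda$ respectively. Hence
\[
\P(I_{n+1} = 1 \mid \FF_n) = \P\bigl(\NN_{n+1}(A_n) \geq 1 \mid \FF_n\bigr)\,\P\bigl(\NN_{n+1}(B_n) = 0 \mid \FF_n\bigr)
= (1 - e^{-\lambda/2})\,e^{-10\lambda} = p_0,
\]
almost surely. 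Taking expectations yields $\P(I_{n+1}=1) = p_0$ for every $n \geq 0$.

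Finally, for the product identity I would iterate the tower property. Since each $I_{kn+j}$ is $(\FF_{kn+j-1}\vee\sigma(\NN_{kn+j}))$-measurable and we have just shown $\P(I_{kn+j}=1\mid \FF_{kn+j-1}) = p_0$ almost surely, conditioning on $\FF_{k(n+1)-1}$ gives
\[
\E\Bigl[\prod_{j=1}^{k} I_{kn+j}\Bigr]
= \E\Bigl[\prod_{j=1}^{k-1} I_{kn+j}\,\cdot\,\P(I_{kn+k}=1\mid \FF_{k(n+1)-1})\Bigr]
= p_0\,\E\Bigl[\prod_{j=1}^{k-1} I_{kn+j}\Bigr],
\]
and iterating $k$ times yields $p_0^k$. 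There is no genuine obstacle here; the only point that requires a bit of care is the verification that the two rectangles are essentially disjoint and that the area in \eqref{def:ln} is exactly $1/2$ at the infimum, both of which are immediate from the definitions and the continuity of Lebesgue measure.
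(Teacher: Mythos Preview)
Your proof is correct and follows essentially the same approach as the paper's: condition on $\FF_n$, use the independence of the fresh PPP $\NN_{n+1}$ to compute the conditional probability as $(1-e^{-\lambda/2})e^{-10\lambda}$ from the disjointness and areas of the two regions, then iterate the tower property for the product. You are in fact slightly more explicit than the paper in verifying that the two rectangles are disjoint up to a null set and that the infimum in the definition of $l_n$ is attained with area exactly $1/2$; the paper simply asserts these facts.
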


\begin{proof}
Recall that the process $\{(g_n(\u_1),\ldots,g_n(\u_k),H_n) : n \geq 0\}$ is an ${\cal F}_n$-Markov chain. {Since $\text{Rec}(g^{\uparrow,1}_n;1,l_n)\setminus H_n$ and $\text{Rec}(W^{\text{move}}_n;5,1)$ are disjoint sets with constant areas,} we get a.s.
\begin{eqnarray*}
\E \big( I_{n+1} | {\cal F}_{n} \big) & = & \P \big( (\text{Rec}(g^{\uparrow,1}_n;1,l_n) \setminus H_n) \cap {\cal N}_{n+1} \neq \emptyset \,|\, {\cal F}_{n} \big) \\
 & & \hspace*{1.5cm}\times \P \big( \text{Rec}(W^{\text{move}}_n;5,1) \cap {\cal N}_{n+1} = \emptyset \,|\, {\cal F}_{n} \big) \\
& = & (1-e^{-\lambda/2}) e^{-10\lambda} =: p_0~.
\end{eqnarray*}
Taking expectation, $\P(I_{n+1}=1)$ also equals $p_0$.

Thus, the conditional expectation $\E(\prod_{j = 1}^{k} I_{kn + j} | {\cal F}_{kn})$ can be written as
$$
\E \Big( \prod_{j = 1}^{k-1} I_{kn + j} \E \Big( I_{k(n+1)} \,|\, {\cal F}_{kn} , \NN_{kn+1} , \ldots , \NN_{k(n+1)-1} \Big) \,|\, {\cal F}_{kn} \Big)
$$
where
$$
\E \Big( I_{k(n+1)} \,|\, {\cal F}_{kn} , \NN_{kn+1} , \ldots , \NN_{k(n+1)-1} \Big) = \E \Big( I_{k(n+1)} \,|\, {\cal F}_{k(n+1)-1} \Big) = p_0
$$
a.s. thanks to the previous computation. Taking expectation, we get
$$
\P \Big( \prod_{j = 1}^{k} I_{kn + j} = 1 \Big) = p_0 \, \P \Big( \prod_{j = 1}^{k-1} I_{kn + j} = 1 \Big)=p_0^k\quad \mbox{ a.s.}
$$
by an immediate induction.
\end{proof}

\begin{lemma}
\label{lem:E_nW_n}
On the event $\{\prod_{j=1}^{k} I_{kn+j}=1\}$, the ordinate of the moving vertex increases by at least $1$ between the $kn$-th and the $k(n+1)$-th steps:
\begin{equation}
\label{E_nW_n}
W^{\text{move}}_{k(n+1)}(2) \geq W^{\text{move}}_{kn}(2) + 1 \quad \mbox{ a.s.}
\end{equation}
\end{lemma}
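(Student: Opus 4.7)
My plan is to prove the desired ordinate increase via a slot-counting argument that exploits both the finite pool of distinct vertex positions ($\leq k$) and the geometric information encoded in $I_{m+1}=1$. Observe first that $\{W_m^{\text{move}}(2)\}_{m\geq 0}$ is almost surely non-decreasing, since at each step the unique smallest-ordinate vertex is replaced by its ancestor, which has strictly larger ordinate. Fix the threshold $\theta := W_{kn}^{\text{move}}(2)+1$ and call a \emph{slot} at step $m$ any distinct position among $\{g_m(\u_1),\ldots,g_m(\u_k)\}$ whose ordinate is strictly less than $\theta$; denote by $S_m$ the number of such slots. By construction $S_{kn}\leq k$.

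The key geometric claim is the following: on $\{I_{m+1}=1\}\cap\{S_m\geq 1\}$, the updated position $g_{m+1}(W_m^{\text{move}})$ is either an element of $W_m^{\text{stay}}$ (a coalescence) or has ordinate at least $W_m^{\text{move}}(2)+1\geq\theta$. Indeed, $I_{m+1}=1$ supplies both a Poisson point $Q$ of $\NN_{m+1}$ in $\text{Rec}(g^{\uparrow,1}_m;1,l_m)\setminus H_m$, lying at distance at most $\sqrt{1+(1+l_m)^2}$ from $W_m^{\text{move}}$ and with ordinate at least $W_m^{\text{move}}(2)+1$, and the absence of $\NN_{m+1}$-points from $\text{Rec}(W_m^{\text{move}};5,1)$. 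Any competing Poisson point with ordinate in $(W_m^{\text{move}}(2),W_m^{\text{move}}(2)+1)$ must therefore lie outside the wide rectangle, hence at horizontal (and so Euclidean) distance strictly greater than $5$ from $W_m^{\text{move}}$; provided the working regime forces $l_m\leq L(H_m)/2\vee 1$ to be small enough that $\sqrt{1+(1+l_m)^2}\leq 5$, such a far point cannot beat $Q$ as the nearest candidate. Points of $\NN_0^{\text{extra}}$ are dealt with by \eqref{cond:LH0}, which confines them to ordinates at most $W_0^{\text{move}}(2)+1$ and hence below $W_m^{\text{move}}(2)$ once one has left the initial phase.

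Granted this claim, I obtain $S_{m+1}=S_m-1$ whenever $I_{m+1}=1$ and $S_m\geq 1$: by minimality of its ordinate, $W_m^{\text{move}}$ is itself a slot, and all vertices at that position move together to the ancestor; in the coalescence case the slot $W_m^{\text{move}}$ merges into an already-counted position, while in the non-coalescence case the geometric claim places the ancestor at ordinate $\geq\theta$, so no new slot is created. Summing across the $k$ steps on $\{\prod_{j=1}^k I_{kn+j}=1\}$ yields $S_{k(n+1)}\leq S_{kn}-k\leq 0$, so every $g_{k(n+1)}(\u_i)$ has ordinate at least $\theta$, proving $W_{k(n+1)}^{\text{move}}(2)\geq W_{kn}^{\text{move}}(2)+1$. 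The main obstacle is the geometric claim above, where one must arbitrate between the $\sqrt{1+(1+l_m)^2}$ upper bound on the distance to $Q$ and the forced $>5$ separation of any low-ordinate Poisson candidate, which is exactly why the widths $1$ and $5$ appear in the definition of $I_{m+1}$.
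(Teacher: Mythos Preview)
Your slot-counting framework is a clean way to organize the $k\geq 2$ reduction, and it parallels the paper's induction on the number of remaining paths. However, the heart of the matter is the single-step geometric claim, and there your argument has a genuine gap.

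You write that a low-ordinate Poisson competitor (ordinate in $(W_m^{\text{move}}(2),W_m^{\text{move}}(2)+1)$) must lie at horizontal distance $>5$, and that it therefore cannot beat the witness point $Q$ \emph{provided} $\sqrt{1+(1+l_m)^2}\leq 5$. But this proviso is equivalent to $l_m\leq \sqrt{24}-1\approx 3.9$, and nothing in the setup forces it: the only a priori bound is $l_m\leq L(H_m)/2\vee 1$, and $L(H_m)$ can be arbitrarily large. When $l_m$ is large, $Q$ may sit at distance close to $l_m+2$ from $W_m^{\text{move}}$, while a Poisson point at horizontal distance just over $5$ in the low band would be much closer. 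Your comparison simply fails in that regime, and the widths $1$ and $5$ in the definition of $I_{m+1}$ were not chosen to make it succeed.

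The paper resolves the large-$l_m$ case by a different geometric argument that you are missing. Placing $W_m^{\text{move}}$ at the origin, the definition of $l_m$ as an infimum forces both points $(\pm 1, l_m+\tfrac12)$ to lie in $H_m$ (otherwise one of the half-rectangles $[0,1]\times[l_m+\tfrac12,l_m+1]$ or $[-1,0]\times[l_m+\tfrac12,l_m+1]$, of area $\tfrac12$, would already lie in $\text{Rec}(g_m^{\uparrow,1};1,l_m)\setminus H_m$). One then checks that any semi-ball of the history containing $(1,l_m+\tfrac12)$ but not the origin in its interior must, when $l_m>3$, swallow the entire strip $[1,l_m+2]\times[0,1]$; symmetrically for the left side. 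Hence the whole set $\{1\leq |x|\leq l_m+2,\ 0\leq y\leq 1\}$ is contained in $H_m$, so low-ordinate Poisson competitors there are excluded from the ancestor search (which runs over $\NN_{m+1}\setminus H_m$), and competitors beyond horizontal distance $l_m+2$ are strictly farther than $Q$ since $(l_m+2)^2>1+(1+l_m)^2$. This is the missing ingredient; once you supply it, your slot decrement $S_{m+1}=S_m-1$ is justified and the rest of your argument goes through.
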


\begin{proof}
Let us first prove it for only one path, i.e. $k=1$. On the event $\{I_{n+1}=1\}$, the rectangle $\text{Rec}(g^{\uparrow,1}_n;1,l_n)$ contains at least one Poisson point. So, $X:=h(W^{\text{move}}_n)=W^{\text{move}}_{n+1}$ belongs to $B^{+}(W^{\text{move}}_n,l_{n}+2)$. Let us prove that $X(2)\geq W^{\text{move}}_{n}(2)+1$. This is clear by definition of $\{I_{n+1}=1\}$ whenever $l_n \leq 3$. We can now focus on the case $l_n>3$. Without loss of generality we assume that $W^{\text{move}}_n = (0,0)$. Since $k=1$ (and so $W^{\text{stay}}_{n}=\emptyset$), it suffices to prove that the set
$$
U := \{\x\in\R^{2} : \, 1 \leq |\x(1)| \leq l_n + 2 \,\mbox{ and }\, 0 \leq \x(2) \leq 1 \}
$$
is included in $H_n$ {and, consequently, contains no Poisson point. This means that the moving vertex will make a vertical progress of at least $1$.} To do so, let us remark that both points $A:=(1,l_n+1/2)$ and $B:=(-1,l_n+1/2)$ belong to the history set $H_n$. Otherwise, the region $\text{Rec}(g^{\uparrow,1}_n ; 1, l_n)\setminus H_n$ would contain at least one of the two rectangles $[B(1),0]\times [l_n + 1/2,l_n+1]$ or $[0, A(1)]\times [l_n + 1/2,l_n+1]$, each of area $1/2$, which is impossible by definition of $l_n$ (recall (\ref{def:ln})). Now, it is not difficult to check that any semi-ball $B^{+}(g_{m}(\u_{1}),\cdot)$, for $0\leq m\leq n-1$, which contains $A$ but not $W^{\text{move}}_n=(0,0)$ in its interior, also contains the strip $[1, l_n+2]\times [0,1]$ when $l_n>3$. By symmetry, the same holds for the left part of $U$.

It remains to prove (\ref{E_nW_n}) for any $k\geq 2$. If $W^{\text{move}}_{kn + 1}(2)$ is already larger than $W^{\text{move}}_{kn}(2)+1$ then this is also the case for $W^{\text{move}}_{k(n+1)}(2)$. Otherwise, the ancestor of $W^{\text{move}}_{kn}$ coincides with an element of $W^{\text{stay}}_{kn}$: this is the tricky situation described in Figure \ref{fig:tricky}. Actually the worst case is the following: $W^{\text{move}}_{kn},\ldots,W^{\text{move}}_{k(n+1)-2}$ are $k-1$ different vertices which have all merged with $W^{\text{move}}_{k(n+1)-1}$ during the $k-1$ last steps. In other words, the $k$ paths starting from $\u_{1},\ldots,\u_{k}$ were still disjoint at the $kn$-th step but have all coalesced $k-1$ steps after. Then, it remains to apply the argument for $k=1$ to the only remaining path, i.e. to $W^{\text{move}}_{k(n+1)-1}$:
$$
W^{\text{move}}_{k(n+1)}(2) = h(W^{\text{move}}_{k(n+1)-1})(2) \geq W^{\text{move}}_{k(n+1)-1}(2)+1 \geq W^{\text{move}}_{kn}(2) + 1~.
$$

It is the above tricky situation, described in Figure \ref{fig:tricky}, which justifies that we consider blocks of $k$ steps when defining the $\tau_j$'s.
\end{proof}

{Lemma \ref{lem:E_nW_n} leads to the next result which provides a `drift condition': on the event $\{\prod_{j=1}^{k} I_{kn+j}=1\}$, the height of the history set has to decrease by at least $1$ between the steps $kn$ and $k(n+1)$, if it is larger than $\kappa$.}

\begin{lemma}
\label{lem:EnLn}
Without loss of generality we will assume that the constant $\kappa$ appearing in the definition of the $\tau_j$'s \eqref{def:tau_j} is an integer larger than $6$. For any $n\geq 0$, on the event $\{\prod_{j=1}^{k} I_{kn+j}=1\}$, we have a.s. that
$$
L(H_{k(n+1)}) \leq \max \{L(H_{kn}) - 1, \kappa \} ~.
$$
\end{lemma}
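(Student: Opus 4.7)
The plan is to establish a per-move drift inequality
\begin{equation*}
L(H_{m+1}) \leq \max\{L(H_m) - \delta_{m+1},\; \kappa\} \quad \text{on } \{I_{m+1}=1\},
\end{equation*}
where $\delta_{m+1} := W^{\text{move}}_{m+1}(2) - W^{\text{move}}_{m}(2) \geq 0$, and then iterate this inequality over the $k$ moves between step $kn$ and step $k(n+1)$. Combined with the lower bound $\sum_{j=1}^k \delta_{kn+j} \geq 1$ provided by Lemma \ref{lem:E_nW_n}, this yields the stated conclusion.

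First I would sharpen the fundamental induction \eqref{InductionLHn}. Writing $r_{m+1} := \|h(W^{\text{move}}_m) - W^{\text{move}}_m\|_2$, only the semi-ball $B^+(W^{\text{move}}_m, r_{m+1})$ is added to $H_m$ at step $m+1$. Since $H_m \subset \mathbb{H}^+(W^{\text{move}}_m(2))$ with the bottom attained at height $W^{\text{move}}_m(2)$, the top ordinate of $H_m \cup B^+(W^{\text{move}}_m, r_{m+1})$ is exactly $W^{\text{move}}_m(2) + \max\{L(H_m), r_{m+1}\}$. Intersecting with $\mathbb{H}^+(W^{\text{move}}_{m+1}(2))$ and subtracting the new base level gives the identity
\begin{equation*}
L(H_{m+1}) = \max\{L(H_m), r_{m+1}\} - \delta_{m+1}.
\end{equation*}

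The main step is then to show $r_{m+1} \leq \max\{L(H_m), \kappa\}$ on $\{I_{m+1}=1\}$. By definition of $I_{m+1}$, there is a point of $\NN_{m+1}$ inside $\text{Rec}(g^{\uparrow,1}_m;1,l_m)\setminus H_m$, which lies at Euclidean distance at most $\sqrt{1 + (1+l_m)^2}$ from $W^{\text{move}}_m$. As the ancestor of $W^{\text{move}}_m$ is the nearest admissible point, $r_{m+1} \leq \sqrt{1+(1+l_m)^2}$, and by \eqref{lnBound} we have $l_m \leq (L(H_m)/2)\vee 1$. An elementary case analysis now concludes: if $L(H_m) \leq 2$ then $l_m \leq 1$ and $r_{m+1} \leq \sqrt{5} < 3 \leq \kappa$; if $L(H_m) \geq 3$ then $l_m \leq L(H_m)/2$ and the inequality $2 + L \leq 3L^2/4$ gives $r_{m+1} \leq L(H_m)$. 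Together with the identity from the previous paragraph, this yields the per-move drift inequality announced above, since $\max\{L(H_m),\kappa\} - \delta_{m+1} \leq \max\{L(H_m) - \delta_{m+1}, \kappa\}$.

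To finish, I would iterate the per-move inequality over $m = kn, kn+1, \dots, k(n+1)-1$ on $\{\prod_{j=1}^k I_{kn+j}=1\}$. A straightforward induction on $j$, splitting into the two cases depending on which term realizes the outer maximum, proves
\begin{equation*}
L(H_{kn+j}) \leq \max\bigl\{L(H_{kn}) - (W^{\text{move}}_{kn+j}(2) - W^{\text{move}}_{kn}(2)),\; \kappa\bigr\}
\end{equation*}
for every $0 \leq j \leq k$. Setting $j = k$ and invoking Lemma \ref{lem:E_nW_n} to bound $W^{\text{move}}_{k(n+1)}(2) - W^{\text{move}}_{kn}(2) \geq 1$ concludes the proof. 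The main obstacle is the geometric estimate $r_{m+1} \leq \max\{L(H_m),\kappa\}$; the assumption $\kappa \geq 6$ is tailored precisely so that the small-$L$ and large-$L$ regimes both fit under the same upper envelope, after which the iteration is pure bookkeeping.
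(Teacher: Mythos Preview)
Your proof is correct, with one cosmetic gap: your case analysis covers $L(H_m)\leq 2$ and $L(H_m)\geq 3$ but not the interval $(2,3)$. This is harmless, since for $2<L(H_m)<3$ one has $l_m\leq L(H_m)/2<3/2$ and hence $r_{m+1}\leq\sqrt{1+(5/2)^2}<3\leq\kappa$, so the bound $r_{m+1}\leq\max\{L(H_m),\kappa\}$ still holds. You should simply merge this into the first case.

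Your route differs from the paper's in packaging rather than in substance. You exploit the exact identity $L(H_{m+1})=\max\{L(H_m),r_{m+1}\}-\delta_{m+1}$ to extract a clean per-move drift inequality, then telescope over the block of $k$ moves and invoke Lemma~\ref{lem:E_nW_n} once at the end. The paper instead splits from the outset on whether $L(H_{kn})\leq\kappa$ or $L(H_{kn})\geq\kappa$, and in the latter case further distinguishes whether any semi-ball overshoots the previous top level; it uses the coarser bound $r_{m+1}\leq l_m+2$ throughout. Both arguments rest on the same three ingredients (the control of $r_{m+1}$ via $I_{m+1}=1$, the bound \eqref{lnBound}, and Lemma~\ref{lem:E_nW_n}), but your formulation is more streamlined and, incidentally, only needs $\kappa\geq 3$ rather than $\kappa\geq 6$---so your closing sentence about $\kappa\geq 6$ being ``tailored precisely'' overstates the constraint your own argument actually imposes.
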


\begin{proof}
Let $n\geq 0$. Let us first assume that $L(H_{kn})\leq \kappa$. If $I_{kn+1}=1$ there is a Poisson point in $\text{Rec}(g^{\uparrow,1}_{kn};1,l_{kn})$. So, by (\ref{lnBound}),
$$
\| h(W^{\text{move}}_{kn}) - W^{\text{move}}_{kn} \|_2 \leq l_{kn} + 2 \leq L(H_{kn})/2 + 2 \leq \kappa
$$
since $\kappa\geq 6$. By (\ref{InductionLHn}), we deduce that $L(H_{kn+1})$ is also smaller than $\kappa$. By induction, the same holds for $L(H_{k(n+1)})$.\\

From now on, let us assume that $L(H_{kn})\geq \kappa$. Two cases must be distinguished.\\
If none of the semi-balls
$$
B^{+}(W^{\text{move}}_{kn+j} , \| h(W^{\text{move}}_{kn+j}) - W^{\text{move}}_{kn+j} \|_{2}) , \, j=0,\ldots,k-1 ,
$$
generated between the $(kn+1)$-th and the $k(n+1)$-th steps exceed the horizontal line $\{\x : \x(2)=W^{\text{move}}_{kn}(2)+L(H_{kn})\}$ then
$$
L(H_{k(n+1)}) \leq L(H_{kn}) - \big( W^{\text{move}}_{k(n+1)}(2) - W^{\text{move}}_{kn}(2) \big)\leq L(H_{kn})-1,
$$
by Lemma \ref{lem:E_nW_n}.\\
Otherwise, we necessarily have
\begin{equation}
\label{etape1}
L(H_{k(n+1)}) \leq \max_{0\leq j\leq k-1} \| h(W^{\text{move}}_{kn+j}) - W^{\text{move}}_{kn+j} \|_{2} ~.
\end{equation}
Combining $I_{kn+1}=1$ and (\ref{lnBound}), we get
$$
\| h(W^{\text{move}}_{kn}) - W^{\text{move}}_{kn} \|_2 \leq l_{kn} + 2 \leq L(H_{kn})/2 + 2 \leq L(H_{kn}) - 1
$$
whenever $L(H_{kn})\geq \kappa \geq 6$. Here is the justification for the choice of $\kappa\geq 6$. This and \eqref{InductionLHn} imply that $L(H_{kn+1})\leq L(H_{kn})$. Then, $I_{kn+2}=1$ and 
$$
\| h(W^{\text{move}}_{kn+1}) - W^{\text{move}}_{kn+1} \|_2 \leq L(H_{kn+1})/2 + 2 \leq L(H_{kn})/2 + 2 \leq L(H_{kn}) - 1
$$
since $L(H_{kn})\geq \kappa$. By induction, we have on $\{\prod_{j=1}^{k} I_{kn+j}=1\}$ that, for all $j\in \{0,\dots k-1\}$, $L(H_{kn+j})\leq L(H_{kn})$ and $\| h(W^{\text{move}}_{kn+j}) - W^{\text{move}}_{kn+j} \|_2 \leq L(H_{kn})-1$. Hence:
$$
\max_{0\leq j\leq k-1} \| h(W^{\text{move}}_{kn+j}) - W^{\text{move}}_{kn+j} \|_{2} \leq L(H_{kn}) - 1 ~,
$$which by \eqref{etape1} concludes the proof.
\end{proof}

\subsection{Synthesis}

Now we are going to define inductively a discrete time integer-valued process $\{M_n = M_n(\u_1, \ldots, \u_k) : n \geq 1\}$ whose role is to dominate the height of the history set. Set $M_0:=\kappa{+1}$, where $\kappa$ is the integer introduced in \eqref{def:tau_j}. For $n \geq 0$ given $M_n$, we define $M_{n+1}$ as follows:
\begin{equation}
\label{defM_n}
M_{n+1} := \begin{cases}
\max \{M_n - 1, \kappa\} & \text{ if } \prod_{j = 1}^{(\kappa+1)k} I_{(\kappa+1)kn + j} = 1,\\
\max \{M_n ,  \sum_{j=1}^{(\kappa+1)} X_{(\kappa+1)n+j} , (\kappa+1)\} & \text{ otherwise,}\\
\end{cases}
\end{equation}
where the r.v.'s $X_{n}$ are defined in (\ref{def:X1}). The sequence $\{M_n : n \geq 1\}$ is a Markov chain with state space $\{\kappa,\kappa + 1, \kappa + 2, \cdots \}$. Let $\tau^{M}$ be the hitting time of $\kappa$:
\begin{equation}
\label{def:sigmaM}
\tau^{M} := \inf \{n \geq 1 : M_n = \kappa \} ~.
\end{equation}

{In order to hit the level $\kappa$ starting from $M_0=\kappa+1$, the Markov chain $(M_n)$ has to decrease by $1$ at least once, i.e. the event
$$
\prod_{j = 1}^{(\kappa+1)k} I_{(\kappa+1)kn + j} = 1
$$
has to occur at least once before $\tau^{M}$. This happens with probability $p_0^{(\kappa+1)k}$. By Lemma \ref{lem:E_nW_n}, considering blocks of $(\kappa+1)k$ steps guarantees a progress of at least $\kappa+1$ for the ordinate of the moving vertex when this event happens, which is the last condition required for good steps. See \eqref{def:tau_j}.}

The r.v. $M_n$ is built in order to dominate the height $L(H_{(\kappa+1)kn})$.

\begin{lemma}
\label{lem:MngeqLn}
The inequality $L(H_{(\kappa+1)kn}) \leq M_{n}$ holds a.s. for all $n \geq 0$. As a consequence, the random time $\tau_1$ defined in (\ref{def:tau_j}) satisfies a.s. $\tau_1 \leq (\kappa+1)k \tau^{M}$.
\end{lemma}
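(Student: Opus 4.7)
The plan is to establish the pointwise bound $L(H_{(\kappa+1)kn}) \leq M_n$ by induction on $n$, and then to deduce the bound on $\tau_1$ by checking that $(\kappa+1)k\tau^M$ meets both defining conditions in \eqref{def:tau_j}. For the base case $n=0$, assumption \eqref{cond:LH0} together with $\kappa\geq 6$ yields $L(H_0) \leq 1 \leq \kappa = M_0$.

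For the inductive step, fix $n \geq 0$, assume $L(H_{(\kappa+1)kn}) \leq M_n$, and split the $(\kappa+1)k$ intervening steps into $(\kappa+1)$ consecutive blocks of $k$ steps each. On the event $\{\prod_{j=1}^{(\kappa+1)k} I_{(\kappa+1)kn+j} = 1\}$ defining Case~1 of \eqref{defM_n}, each of these blocks has its $k$ indicators all equal to $1$, so Lemma \ref{lem:EnLn} applies to every block in turn. A straightforward telescoping (using the identity $\max\{\max\{a-1,\kappa\}-1,\kappa\} = \max\{a-2,\kappa\}$ iterated $\kappa+1$ times) gives
\begin{equation*}
L(H_{(\kappa+1)k(n+1)}) \leq \max\bigl\{L(H_{(\kappa+1)kn}) - (\kappa+1),\, \kappa\bigr\} \leq \max\{M_n - 1, \kappa\} = M_{n+1}.
\end{equation*}
On the complementary Case~2, I rewrite Lemma \ref{lem:IncreaseLn}(i) in the unconditional form $L(H_{k(m+1)}) \leq \max\{L(H_{km}), X_{m+1}\}$ (which holds trivially by splitting on whether the height increases at step $m+1$), and iterate it over the $(\kappa+1)$ sub-blocks to obtain
\begin{equation*}
L(H_{(\kappa+1)k(n+1)}) \leq \max\bigl\{L(H_{(\kappa+1)kn}),\, X_{(\kappa+1)n+1},\,\dots,\, X_{(\kappa+1)(n+1)}\bigr\} \leq \max\Bigl\{M_n,\, \textstyle\sum_{j=1}^{\kappa+1} X_{(\kappa+1)n+j}\Bigr\} \leq M_{n+1},
\end{equation*}
where the second inequality uses the induction hypothesis together with the non-negativity of the $X_j$'s.

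For the consequence, evaluating the first part at $n = \tau^M$ gives $L(H_{(\kappa+1)k\tau^M}) \leq M_{\tau^M} = \kappa$, which is the height condition in \eqref{def:tau_j}. Moreover, the chain $(M_n)$ can enter state $\kappa$ only through Case~1 of \eqref{defM_n}, so $\prod_{j=1}^{(\kappa+1)k} I_{(\kappa+1)k(\tau^M - 1) + j} = 1$. Applying Lemma \ref{lem:E_nW_n} to each of the $(\kappa+1)$ sub-blocks of $k$ consecutive indicators yields a cumulative increase of at least $\kappa+1$ in the ordinate of the moving vertex between times $(\kappa+1)k(\tau^M-1)$ and $(\kappa+1)k\tau^M$; combined with the almost-sure monotonicity of $n \mapsto W_n^{\text{move}}(2)$, this produces $W_{(\kappa+1)k\tau^M}^{\text{move}}(2) \geq W_0^{\text{move}}(2) + \kappa + 1$. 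Since $(\kappa+1)k\tau^M$ is a positive multiple of $k$ satisfying both defining conditions of $\tau_1$, we conclude $\tau_1 \leq (\kappa+1)k\tau^M$.

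The main subtlety is simply bookkeeping: the block length $(\kappa+1)k$ is chosen so that a single descent of $(M_n)$ to its minimum value $\kappa$ simultaneously forces the history set to have small height (via the $\kappa+1$ applications of Lemma \ref{lem:EnLn}) and the moving vertex to have advanced by at least $\kappa+1$ (via the same $\kappa+1$ applications of Lemma \ref{lem:E_nW_n}), exactly matching the two conditions in \eqref{def:tau_j}.
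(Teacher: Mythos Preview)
Your proof is correct and follows essentially the same approach as the paper's: induction on $n$ with the same two-case split according to \eqref{defM_n}, using Lemma~\ref{lem:EnLn} in Case~1 and the unconditional form of Lemma~\ref{lem:IncreaseLn}(i) in Case~2, then deducing the bound on $\tau_1$ by exhibiting a block of $(\kappa+1)k$ consecutive indicators equal to $1$ before time $\tau^M$. The only cosmetic differences are that in Case~1 you iterate all $\kappa+1$ applications of Lemma~\ref{lem:EnLn} to get the sharper (but unneeded) bound $\max\{M_n-(\kappa+1),\kappa\}$, and for the consequence you identify the specific index $m'=\tau^M-1$ where the paper merely asserts existence of some $m'<\tau^M$.
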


\begin{proof}
Let us first prove $L(H_{(\kappa+1)kn}) \leq M_{n}$ by induction. Assumption (\textbf{H1}) says $L(H_{0}) \leq \kappa \leq M_0$. Assume that $L(H_{(\kappa+1)kn})\leq M_{n}$ for some $n\geq 0$. Either $\prod_{j=1}^{(\kappa+1)k} I_{(\kappa+1)kn+j}=1$, and then by Lemma \ref{lem:EnLn},
$$
L(H_{(\kappa+1)kn+k}) \leq \max \{L(H_{(\kappa+1)kn}) - 1, \kappa \} \leq \max \{M_{n} - 1, \kappa \} = M_{n+1} ~.
$$
We can easily iterate the argument, still applying Lemma \ref{lem:EnLn}:
$$
L(H_{(\kappa+1)kn+2k}) \leq \max \{L(H_{(\kappa+1)kn+k}) - 1, \kappa\} \leq M_{n+1}
$$
to finally get $L(H_{(\kappa+1)k(n+1)})\leq M_{n+1}$. \\
{Or, $\prod_{j=1}^{(\kappa+1)k} I_{(\kappa+1)kn+j}=0$ and then, }by Lemma \ref{lem:IncreaseLn} (i),
\begin{eqnarray*}
L(H_{(\kappa+1)k(n+1)}) & \leq & \max \{ L(H_{(\kappa+1)kn+\kappa k}) , X_{(\kappa+1)n+(\kappa+1)}\} \\
 & \leq & \max \{ L(H_{(\kappa+1)kn+(\kappa-1)k}) , X_{(\kappa+1)n+\kappa}+X_{(\kappa+1)n+(\kappa+1)}\} \\
 & \leq & \max \{ L(H_{(\kappa+1)kn}) , X_{(\kappa+1)n+1}+\ldots+X_{(\kappa+1)n+(\kappa+1)}\} \\
 & \leq & M_{n+1} ~.
\end{eqnarray*}
This completes the proof by induction. As a consequence, at the $(\kappa+1)k\tau^{M}$-th step the height $L(H_{(\kappa+1)k\tau^{M}})$ is smaller than $\kappa$. Moreover, by construction of the chain $(M_n)$, there exists an integer $m'<\tau^{M}$ such that $\prod_{j=1}^{(\kappa+1)k} I_{(\kappa+1)km'+j}=1$. Lemma \ref{lem:E_nW_n}, applied $(\kappa+1)$ times, implies
$$
W^{\text{move}}_{(\kappa+1)k\tau^{M}}(2) \geq W^{\text{move}}_{(\kappa+1)k(m'+1)}(2) \geq W^{\text{move}}_{(\kappa+1)km'}(2)+(\kappa+1) \geq W^{\text{move}}_{0}(2)+(\kappa+1) ~.
$$
We finally get $\tau_1 \leq (\kappa+1)k\tau^{M}$.
\end{proof}

A proof similar to the one of Lemma 2.6 in \cite{RSS15} leads to the next result.

\begin{lemma}
\label{lem:M_tail}
For any $n\in\N$ we have
$$
\P( \tau^{M} > n) \leq C_0 e^{-C_1 n} ~.
$$
\end{lemma}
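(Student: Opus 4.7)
The proof follows the strategy of [RSS15, Lemma~2.6]. Inspecting \eqref{defM_n}, the chain $(M_n)$ evolves as follows: at each step, independently of the past, with probability $p := p_0^{(\kappa+1)k} > 0$ (Lemma~\ref{lem:ProbIn}) it decrements by $1$ whenever $M_n > \kappa$, and with probability $1-p$ it jumps to $\max\{M_n, S^{(n)}, \kappa+1\}$, where $S^{(n)} := \sum_{j=1}^{\kappa+1} X_{(\kappa+1)n+j}$ has exponentially decaying tail by \eqref{X^(1)ExpoTail}. The plan is to exploit this via an exponential Lyapunov function argument.

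I would first pick $\theta > 0$ small enough so that $q := \E[e^{\theta S^{(0)}}] < \infty$, which is possible since each $X_j$ has an exponential tail, and set $V(m) := e^{\theta(m-\kappa)}$. Writing the one-step drift for $m \geq \kappa+1$:
\begin{align*}
\E\bigl[V(M_{n+1}) \mid M_n = m\bigr]
&= p\, e^{-\theta}\, V(m) + (1-p)\, \E\bigl[V\bigl(\max\{m, S^{(n)}\}\bigr)\bigr] \\
&\leq \beta\, V(m) + b_m,
\end{align*}
where $\beta := 1 - p(1-e^{-\theta}) \in (0,1)$ and $b_m := (1-p)\, \E\bigl[V(S^{(n)})\, \mathbf{1}_{S^{(n)} > m}\bigr]$. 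Since $\E[V(S^{(0)})] = e^{-\theta\kappa}\, q < \infty$, dominated convergence yields $b_m \to 0$ as $m \to \infty$.

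Next, I would choose $m^\star$ large enough that $b_m \leq \tfrac{1-\beta}{2}\, V(m)$ for all $m > m^\star$, obtaining the genuine geometric contraction $\E[V(M_{n+1}) \mid M_n = m] \leq \tfrac{1+\beta}{2}\, V(m)$ on $\{M_n > m^\star\}$. Setting $\gamma := 2/(1+\beta) > 1$, the process $V(M_n)\, \gamma^n$ becomes a nonnegative supermartingale up to the entrance time $\tau_\star := \inf\{n \geq 0 : M_n \leq m^\star\}$. Optional stopping and Markov's inequality then give $\P(\tau_\star > n) \leq C\, \gamma^{-n}$, with $C$ controlled via $\E[V(M_1)] < \infty$ (since $V(M_1) \leq V(S^{(0)}) + V(\kappa+1)$).

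Finally, from any state in the finite set $\{\kappa+1, \ldots, m^\star\}$, a run of $m^\star-\kappa$ consecutive ``down'' events brings the chain to $\kappa$ with probability at least $p^{m^\star-\kappa} > 0$. A geometric-trials argument combining the strong Markov property with the exponential-tail estimate for $\tau_\star$ yields the desired bound $\P(\tau^M > n) \leq C_0 e^{-C_1 n}$. The main obstacle is the two-regime nature of the drift --- truly geometric only for large $m$ but merely bounded near the boundary $\kappa+1$ --- which is handled by splitting the state space at $m^\star$ and combining the Foster--Lyapunov contraction for large values with the uniform lower bound on the down-probability in the bounded portion.
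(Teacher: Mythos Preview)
Your proposal is correct and follows essentially the same route as the paper: both use the exponential Lyapunov function $V(m)=e^{\theta m}$, establish a geometric drift inequality $\E[V(M_{n+1})\mid M_n=m]\le \rho\, V(m)$ for $m$ larger than some threshold $m^\star$ (the paper's $n_0$), and then couple this with a uniform positive probability of reaching $\kappa$ from the bounded set $\{\kappa+1,\ldots,m^\star\}$. The paper invokes Asmussen's criterion \cite[Prop.~5.5, Ch.~1]{A03} directly, whereas you spell out the supermartingale $\gamma^n V(M_n)$ and the optional stopping step by hand; the content is the same.

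One small point: your displayed equality
\[
\E\bigl[V(M_{n+1}) \mid M_n = m\bigr]=p\,e^{-\theta}V(m)+(1-p)\,\E\bigl[V(\max\{m,S^{(n)}\})\bigr]
\]
tacitly assumes that the down-event $\{\prod_j I_{(\kappa+1)kn+j}=1\}$ and $S^{(n)}$ are conditionally independent, which the paper does not assert (both are built from the same PPP's $\NN_{(\kappa+1)kn+1},\dots,\NN_{(\kappa+1)k(n+1)}$). The paper handles this by writing the inequality $\E[e^{\alpha Y}\mathbf{1}_{Y>l}\mathbf{1}_{\prod I_j=0}]\le \E[e^{\alpha Y}\mathbf{1}_{Y>l}]$ instead; making the same replacement in your computation yields $\E[V(M_{n+1})\mid M_n=m]\le \beta V(m)+\E[V(S^{(n)})\mathbf{1}_{S^{(n)}>m}]$, which differs from your $b_m$ only by the harmless factor $(1-p)^{-1}$ and leaves the rest of your argument unchanged.
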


\begin{proof}
Thanks to \cite[Proposition 5.5, Chapter 1]{A03} (see also \cite[Chap. 15]{meyntweediebook}) it is enough to show that there exists a function $f:\N \to \R_{+}$, an integer $n_0$ and real numbers $r>1$, $\delta > 0$ such that
\begin{itemize}
\item[$\bullet$] $f(l)>\delta$ for any $l\in\N$;
\item[$\bullet$] $\E[f(M_{1})| M_0 = l] < \infty$ for any $l\leq n_0$;
\item[$\bullet$] and $\E[f(M_{1})| M_0 = l] \leq f(l)/r$ for any $l>n_0$.
\end{itemize}
Indeed, this implies the existence of some $r>1$ such that $\E(r^{\tau^M(n_0)}|M_0 = n_0)<\infty$ where $\tau^M(n_0):=\inf\{n \geq 1: M_n \in [0,n_0]\}$. In other words, the hitting time $\tau^M(n_0)$ admits an exponential moment. Finally, Lemma \ref{lem:M_tail} follows from the fact that starting from any $l\leq n_0$, $p_0^{(\kappa+1)kn_0}>0$ gives a lower bound for the probability that the chain hits the state $\kappa$ within the next $n_0$ steps where $p_0$ is defined in Lemma \ref{lem:ProbIn}.

We take $f:\N\to\R_{+}$ to be $f(l):=e^{\alpha l}$ where $\alpha>0$ is small enough so that $\E[e^{\alpha Y}]<\infty$ with $Y:=\max\{X_{1}+\ldots+X_{(\kappa+1)},(\kappa+1)\}$. This is possible by Lemma \ref{lem:IncreaseLn}. So, for any $l\leq n_0$,
$$
\E[f(M_{1})| M_0 = l] \leq e^{\alpha n_0} \E[e^{\alpha(M_{1}-M_0)}| M_0 = l] \leq e^{\alpha n_0} \E[e^{\alpha Y}] < \infty ~.
$$
Then, pick $r>1$ such that $e^{-\alpha}p_0^{(\kappa+1)k} + (1-p_0^{(\kappa+1)k}) < 1/r$. Using (\ref{defM_n}), we can write for $l\geq n_0>\kappa$:
\begin{multline*}
\E[e^{\alpha(M_{1} - M_{0})}|M_0 = l] \\
\begin{aligned}
 = & \E[e^{\alpha(M_{0}-1 - M_{0})} \ind_{\prod_{j=1}^{(\kappa+1)k} I_{j}=1} |M_0 = l]+
\E[e^{\alpha(\max(M_0,Y) - M_{0})} \ind_{\prod_{j=1}^{(\kappa+1)k} I_{j}=0} |M_0 = l]\\
 \leq & e^{-\alpha} p_0^{(\kappa+1)k} + (1 - p_0^{(\kappa+1)k}) + e^{-\alpha l}\E[{\mathbf 1}_{\{Y>l\}} e^{\alpha Y}] \\
 < & 1/r' ,
\end{aligned}
\end{multline*}
for $n_0$ large enough and $r'\in (1,r)$. This completes the proof.
\end{proof}

We are now able to prove Proposition \ref{propo:Tau_tail}. As suggested by Lemma \ref{lem:MngeqLn}, the dominating r.v. $T$ occurring in Proposition \ref{propo:Tau_tail} is given by $(\kappa+1)k\tau^M$.

\begin{proof}[Proof of Prop. \ref{propo:Tau_tail}]
Let us first start with the case $j=0$. Lemmas \ref{lem:MngeqLn} and \ref{lem:M_tail} ensure that
{$$
\P \big(\tau_{1}-\tau_0 \geq n \ |\ \mathcal{F}_{\tau_0} \big) = \P \big( \tau_{1} \geq n \ |\ \mathcal{F}_{0} \big) \leq \P \big( (\kappa+1) k \tau^M \geq n \big) \leq C_0 e^{-C_1 n}
$$}
for suitable positive constants $C_0,C_1$. So, conditional to $\mathcal{F}_{0}$ (which contains the information given by the initial configuration $(\u_1,\ldots,\u_k,H_0)$), $\tau_{1}-\tau_0$ is stochastically dominated by {$T:=(\kappa+1) k \tau^M$}.

Now, let us prove the result for $j=1$; we will deduce the result for any $j$ similarly. The idea consists in working conditionally on the $\sigma$-algebra $\FF_{\tau_1}$ and applying the previous strategy (i.e. Lemmas \ref{lem:UnexploredCone} to \ref{lem:M_tail}) to the ``new starting configuration'' $(g_{\tau_{1}}(\u_1),\ldots, g_{\tau_{1}}(\u_k), H_{\tau_{1}})$. {First remark that the elements $g_{\tau_{1}}(\u_1),\ldots, g_{\tau_{1}}(\u_k)$ as well as the history set $H_{\tau_{1}}$ are measurable w.r.t. $\FF_{\tau_1}$: they are deterministic conditionally to $\FF_{\tau_1}$. Assumptions (\textbf{H1}) and (\textbf{H2}) are also clearly satisfied by the definition of the hitting time $\tau_1$ and by the construction of the joint exploration process (see Lemma \ref{lem:ElemProperties}).}


Then, from step $\tau_1$ onwards, we can apply the strategy developed throughout this section, and dominate the height of the history set by a new Markov chain, say $(M'_n)$, built as in (\ref{defM_n}) and distributed as $(M_n)$. Hence, conditionally to $\FF_{\tau_1}$, the increment $\tau_2-\tau_1$ is stochastically dominated by a r.v. {$\tau^{M'}$} where $\tau^{M'}$ is the hitting time of $\kappa$ for the chain $(M'_n)$. Of course, $\tau^{M'}$ and $\tau^{M}$ are identically distributed.
\end{proof}


\section{Renewal steps}
\label{sect:Renewal}

From now on, we come back to the original joint exploration process built with a single PPP $\NN$ and starting from an initial configuration $(\u_1,\ldots,\u_k,H_0)$ satisfying (\textbf{H1}) and (\textbf{H2}).

Before describing the mathematical details of renewal steps, we provide
the general idea. Let us first introduce some notations. Consider the $j$-th good step with $j\geq 1$.
Let us set for any $1\leq i\leq k$,
\begin{equation*}
g_{\tau_j}^{\downarrow}(\u_i) := (g_{\tau_j}(\u_i)(1) ,
W^{\text{move}}_{\tau_j}(2)) \; \mbox{ and } \; g_{\tau_j}^{\uparrow}(\u_i)
:= (g_{\tau_j}(\u_i)(1) , W^{\text{move}}_{\tau_j}(2) + \kappa)
\end{equation*}
respectively the projections of $g_{\tau_j}(\u_i)$ onto the horizontal
axes with ordinates $W^{\text{move}}_{\tau_j}(2)$ and
$W^{\text{move}}_{\tau_j}(2)+\kappa$. The vertices $g_{\tau_j}(\u_i)$'s lie in the horizontal strip delimited by these two axes.

Let us now define the following `renewal' events.

\begin{definition}
\label{def:renewalStep}
For $j \geq 1$, the $j$-th good step is called a renewal step, if the
following event, henceforth called the renewal event, $A_j=A_j(\u_1, \ldots, \u_k)$
occurs:
\begin{equation}
\label{def:EventAj}
\begin{split}
A_{j}  := \bigcap_{i=1}^k \Big\{ & \Card \bigl(
\bigl( B^+(g_{\tau_j}^{\downarrow}(\u_i), \kappa+1) \setminus\!
H_{\tau_j} \bigr) \cap \NN  \bigr) =  1\\
 & \mbox{and }\, \Card \bigl( B^+(g_{\tau_j}^{\uparrow}(\u_i), 1) \cap \NN \bigr) = 1 \Big\}.\end{split}
\end{equation}
\end{definition}

Note that the definition of $A_j $ is associated with the $j$-th good step and then, only good steps can be renewal steps.

The event $A_j$ asserts that for any index $i$, the semi-ball $B^+(g_{\tau_j}^{\downarrow}(\u_i), \kappa+1)$ contains only one Poisson point which is actually included in $B^+(g_{\tau_j}^{\uparrow}(\u_i), 1)$. Besides, the semi-balls $B^+(g_{\tau_j}^{\uparrow}(\u_i),1)$'s may have non-empty intersections and the corresponding Poisson points (involved by $A_j$) may not necessarily be distinct in this case.



Thus we set $\gamma_0=0$ and for $\ell \geq 1$, let $\gamma_\ell=\gamma_\ell(\u_1,\dots,\u_k)$ be the number of good steps required for the $\ell$-th renewal step:
$$
\gamma_\ell := \inf \big\{ j > \gamma_{\ell - 1} : \mbox{ the event } A_j \mbox{ occurs} \big\} ~.
$$
Moreover
\begin{equation}
\label{def:betaell}
\beta_\ell := \tau_{\gamma_\ell}
\end{equation}
denotes the total number of steps required for $\ell$-th renewal step. {In the sequel, we will break down the DSF paths starting from $\u_1,\ldots,\u_k$ according to these renewal steps $\beta_\ell$, $\ell \geq 1$, in order to obtain the searched decay of the tail distribution for the coalescence time of two paths (Theorem \ref{thm:CoalescingTimetail} of Section \ref{sec:Tail}) and the convergence of scaled DSF paths to coalescing Brownian motions (Section \ref{sect:(I1)}).}

In this section, we first show that the renewal times are a.s. finite and establish an exponential decay for the tail distribution of the number of steps between two consecutive renewal steps (Section \ref{sect:TailNbSteps}) as well as the size of the region explored by the DSF paths between two consecutive renewal steps (Section \ref{sect:Widths}). Though we define the renewal events for $k$ paths, with a general integer $k$, only two cases will be important for us: the single path case in Section \ref{sect:OnePath}, in which the role of the renewal event $A_j$ is explained, and the two paths case in Section \ref{sect:TwoPaths}.\\

Because of the event $A_j$ concerns the PPP $\NN$ inside $\mathbb{H}^{+}(W_{\tau_{j}}^{\text{move}}(2))\setminus H_{\tau_j}$, it does not belong to the $\sigma$-field $\mathcal{F}_{\tau_j}$. {Hence, for any $\ell\geq 1$, the r.v. $\gamma_\ell$ is \textit{not} a $(\mathcal{F}_{\tau_j})$-stopping time. This is the reason why we enrich the $\sigma$-field $\mathcal{F}_{\tau_j}$ by including the events $A_1, A_2,\dots, A_j$. Let ${\cal S}_0=\FF_0$ and, for $j\geq 1$, we consider the enhanced $\sigma$-field:
\begin{equation}
\label{def:FiltrationGl}
{\cal S}_{j} := \sigma \big( {\cal F}_{\tau_j}, A_1, A_2, \dots , A_j \big) ~.
\end{equation}}
For any $\ell \geq 0$, the r.v. $\gamma_\ell$ is then a stopping time w.r.t. the filtration $\{{\cal S}_j : j \geq 0\}$. Next we introduce the filtration $\{{\cal G}_\ell : \ell\geq 0\}$ where
\begin{equation}
\label{Filtration}
{\cal G}_\ell  := {\cal S}_{\gamma_\ell} ~.
\end{equation}

\begin{lemma}
The sequence of r.v.'s $\{\beta_\ell : \ell \geq 0\}$ denoting the total number of steps required for $\ell$-th renewal step is adapted to the filtration $\{{\cal G}_\ell : \ell \geq 0\}$.
\end{lemma}

\subsection{Tail distribution of the number of steps between consecutive renewal steps (for $k$ paths)}
\label{sect:TailNbSteps}

We first show that the probability of a renewal event occurring at a good step is bounded strictly away from $0$ as well as $1$, conditionally on the previous steps. This will be used to show that the renewal steps must occur and that a geometric number of good steps at most are required to reach a renewal step.
The main result of the current section is:

\begin{proposition}
\label{propo:GoodRegStepTail}
There exist positive constants $C_0, C_1$ such that for any $\ell\geq 0$, for any $n\geq 1$,
\begin{equation}
\label{Beta_tail}
\P \big( \beta_{\ell+1} - \beta_{\ell} \geq n \ |\ {\cal G}_{\ell} \big) \leq C_0 e^{-C_1 n} ~.
\end{equation}
\end{proposition}

In order to prove the above proposition, we need the following result.

\begin{lemma}
\label{lem:AjProbBound}
{There exist $0<p_1 \leq p_2<1$ depending only on $k, \kappa, \lambda$ such that, for any $j\geq 1$, the following holds:
\begin{equation}
\label{A_jEvent_Bound}
\P \big( A_j \ |\ {\cal F}_{\tau_j}, \mathbf{1}_{A_1},\dots, \mathbf{1}_{A_{j-1}} \big) = \P \big( A_j \ |\ {\cal F}_{\tau_j} \big) \in [ p_1 , p_2 ] ~.
\end{equation}}
\end{lemma}

\begin{proof}
We first show that (\ref{A_jEvent_Bound}) holds for $j=1$ (where we set $\mathbf{1}_{A_0}=1$). Observe that $ {\cal F}_{\tau_1}$ does not contain any information about the PPP in the half plane $\mathbb{H}^+(W^{\text{move}}_{\tau_1}(2) + \kappa + 1)$. We can choose
$$
p_2 := \P( \Card (B^+(g_{\tau_1}^{\uparrow}(\u_1), 1) \cap \NN) = 1 ) = \frac{\lambda \pi}{2} \exp (-\lambda \pi/2)
$$
as an upper bound strictly smaller than $1$.

For a single path, the lower bound is straight forward. In fact the
event $ \{  \NN \cap \bigl( B^+(g_{\tau_1}^{\downarrow}(\u_1), \kappa+1)
\setminus\! B^+(g_{\tau_1}^{\uparrow}(\u_1), 1) \bigr) = \emptyset,
\Card  (\NN \cap B^+(g_{\tau_1}^{\uparrow}(\u_1), 1)) = 1 \} $
implies that renewal occurs and hence provides a lower bound.
However, finding a strictly positive lower bound for more than one path requires more work
when the paths are close and when the regions overlap.

We assume here without loss of generality that all the $g_{\tau_1}(\u_i)$'s are different.
Then the same holds for $g_{\tau_1}^{\downarrow}(\u_i)$'s and
$g_{\tau_1}^{\uparrow}(\u_i)$'s a.s.  Let us set
$$
F := \bigcup_{1\leq i\leq k} \bigl( B^+(g_{\tau_1}^{\downarrow}(\u_i),
\kappa+1) \setminus\! B^+(g_{\tau_1}^{\uparrow}(\u_i), 1) \bigr) ~.
$$

The probability that $F \cap \NN$ is empty is easily bounded from below
by a positive constant depending on $k,\kappa$ and the intensity
$\lambda$ of the PPP. But it requires tedious geometric computations
to show that each $B^+(g_{\tau_1}^{\uparrow}(\u_i), 1)$ contains
exactly one Poisson point with positive probability, because
$g_{\tau_j}(u_i)$'s can be very close to each other. So, we
give the main arguments and skip the details.

Let $\epsilon>0$. If
\begin{equation}
\label{Condition:p1}
\min_{i\not= i'} | g_{\tau_1}^{\uparrow}(\u_i)(1) - g_{\tau_1}^{\uparrow}(\u_{i'})(1) | \geq \epsilon
\end{equation}
then it is possible to exhibit deterministic (conditional on
$\mathcal{F}_{\tau_1}$) regions $\Lambda_{1},\ldots,\Lambda_k$ such
that their areas are equal to some constant $c(\epsilon)>0$, they do not overlap with $F$ and each $\Lambda_{i}$ satisfies
$$
\Lambda_{i} \subset B^+(g_{\tau_1}^{\uparrow}(\u_i), 1) \setminus\! \Big( \bigcup_{i'\not= i} B^+(g_{\tau_1}^{\uparrow}(\u_{i'}, 1) \Big) ~.
$$
From there, putting exactly one Poisson point inside each $\Lambda_{i}$, it is not difficult to conclude. When (\ref{Condition:p1}) is no longer true, we split $\{1,\ldots,k\}$ into disjoint subsets in which consecutive elements $g_{\tau_1}^{\uparrow}(\u_i)$ and $g_{\tau_1}^{\uparrow}(\u_{i+1})$ are at a distance smaller than $\epsilon$. We treat these subsets separately. Let us consider the first of them, say $\{g_{\tau_1}^{\uparrow}(\u_1),\ldots,g_{\tau_j}^{\uparrow}(\u_{s})\}$ with $1\leq s\leq k$ and $|g_{\tau_1}^{\uparrow}(\u_i)(1)-g_{\tau_1}^{\uparrow}(\u_{i+1})(1)| < \epsilon$ for $1\leq i\leq s-1$. For $\epsilon>0$ small enough (depending on $\kappa$ and $k$), the semi-balls $B^+(g_{\tau_1}^{\uparrow}(\u_i), 1)$, $1\leq i\leq s (\leq k)$, have a non-empty intersection. Hence, we can exhibit a deterministic region $\Lambda^{(1)}$ with area $c'(\epsilon)>0$ such that
$$
\Lambda^{(1)} \subset \bigcap_{1\leq i\leq s} B^+(g_{\tau_1}^{\uparrow}(\u_i), 1) \;
\mbox{ and } \; \Lambda^{(1)} \cap \Big( F \cup
\Big( \bigcup_{i>s} B^+(g_{\tau_1}^{\uparrow}(\u_i), 1) \Big) \Big) = \emptyset ~.
$$
Now we repeat the argument with the remaining set $\{g_{\tau_1}^{\uparrow}(\u_{s+1}) ,
\dotsc,g_{\tau_1}^{\uparrow}(\u_{k})\}$ (possibly empty). We finally exhibit
disjoint subsets $\Lambda^{(1)},\ldots,\Lambda^{(n)}$ with good properties.
Putting exactly one Poisson point inside each of them and no point in $F$,
we may obtain the lower bound $p_1$.\\

Next consider Equation (\ref{A_jEvent_Bound}) for $j = 2$. Recall that the random
variable $\mathbf{1}_{A_1}$ depends only on the PPP in the half plane
$\mathbb{H}^-(W^{\text{move}}_{\tau_1}(2) + \kappa +1)$, where $\mathbb{H}^-(l):=\{\x\in\R^2 : \x(2)\leq l\}$, and where by
definition we have $W^{\text{move}}_{\tau_2}(2) - W^{\text{move}}_{\tau_1}(2)
> \kappa +1$.  Given ${\cal F}_{\tau_2}$, since the subsequent steps as well as the event $A_2$ depend
only on the PPP in the half plane $\mathbb{H}^+(W^{\text{move}}_{\tau_2}(2))$ we have
$$
\P \big( A_2 \ |\ {\cal F}_{\tau_2}, \mathbf{1}_{A_1} \big)
= \P \big( A_2\ |\ {\cal F}_{\tau_2} \big),
$$
and then the proof follows using the same argument as in the case of $j = 1$. Finally for general $j \geq 1$, the proof follows by method of induction.
\end{proof}

Now we are ready to prove Proposition \ref{propo:GoodRegStepTail}.

\begin{proof}
We work conditionally on ${\cal G}_{\ell}$, for $\ell\geq 0$. Let us first show that
$$
\P(\gamma_{\ell + 1} - \gamma_\ell > j \ |\ {\cal G}_\ell ) \leq \P(G > j)
$$
where $G$ is a geometric r.v. with success probability $p_1$. In other words, the r.v. $\gamma_{\ell + 1}-\gamma_\ell$ is stochastically dominated by $G$. First we prove it for $\ell=0$. The argument for general $\ell\geq 0$ is the same. In what follows, $A^c$ denotes the complement event of $A$.
\begin{eqnarray*}
\P(\gamma_1 - \gamma_0 > j  \mid  {\cal G}_0) & = & \P(A_1^c,\, A_2^c, \dotsc, A_j^c  \mid  {\cal G}_0) \\
& = & \E \bigl[ \prod_{i=1}^{j-1} \mathbf{1}_{A^c_i} \E [\mathbf{1}_{A^c_j} \mid {\cal S}_{j-1}, {\cal F}_{\tau_j} ] \mid  {\cal G}_0 \bigr] \\
& = & \E\bigl[ \prod_{i=1}^{j-1} \mathbf{1}_{A^c_i} \E [\mathbf{1}_{A^c_j} \mid {\cal F}_{\tau_j}] \mid {\cal G}_0\bigr] \\
& \leq & (1- p_1) \E\bigl[ \prod_{i=1}^{j-1} \mathbf{1}_{A^c_i} \mid {\cal G}_0\bigr] \\
& \leq & (1- p_1)^j ~,
\end{eqnarray*}
by Lemma \ref{lem:AjProbBound} and a direct recursion.
Next we show that given the $\sigma$-field ${\cal S}_j$, the difference $\tau_{j+1}-\tau_j$ still decays exponentially fast. Observe that the events $A_{1},\dots,A_{j-1}$ depend only on the Poisson points in $\mathbb{H}^{-}(W^{\text{move}}_{\tau_{j-1}} (2)+\kappa+1)$ while $\tau_{j+1}-\tau_{j}$ depends on the points in $\mathbb{H}^{+}(W^{\text{move}}_{\tau_{j}} (2))$ and hence are independent given ${\cal F}_{\tau_j}$.
{\begin{multline}
\P(\tau_{j+1} - \tau_j \geq n \mid {\cal S}_j) = \P(\tau_{j+1} - \tau_j \geq n \mid \sigma({\cal F}_{\tau_j}, A_{j-1}) ) \\
\begin{aligned}
& = \frac{\P(\tau_{j+1} - \tau_j \geq n, A_{j}^c \mid {\cal F}_{\tau_j})}{\P(A_{j}^c \mid {\cal F}_{\tau_j})} \mathbf{1}_{A_j^c} + \frac{\P(\tau_{j+1} - \tau_j \geq n, A_j \mid {\cal F}_{\tau_j})}{\P(A_{j} \mid {\cal F}_{\tau_j})} \mathbf{1}_{A_j} \\
& \leq \P(\tau_{j+1} - \tau_j \geq n \mid {\cal F}_{\tau_j}) \left( \frac{1}{\P(A_{j}^c \mid {\cal F}_{\tau_j})} + \frac{1}{\P(A_{j} \mid {\cal F}_{\tau_j})} \right) ~.
\end{aligned}
\label{etape2}
\end{multline}}
So, using Proposition \ref{propo:Tau_tail} and Lemma \ref{lem:AjProbBound} we obtain the expected decay:
$$
\P(\tau_{j+1} - \tau_j \geq n \mid {\cal S}_j) \leq C_0 \exp{(- C_1 n)} ~,
$$
where $C_0,C_1 > 0$ are constants only depending on $k,\kappa,\lambda$.

{Therefore, we can construct a random variable $T$ satisfying $\P(T\geq n)\leq C_0 \exp {(- C_1 n)}$ and stochastically dominating the difference $\tau_{j+1}-\tau_j \mid {\cal S}_j$. Further, we can stochastically bound, for any $\ell,m$, the sum
$$
\sum_{j=0}^{m-1} \tau_{\gamma_{\ell}+j+1} - \tau_{\gamma_{\ell}+j}
$$
conditionally to ${\cal G}_{\ell}$ by $T_1+\ldots+T_m$ where the $T_j$'s are i.i.d. copies of the r.v. $T$ defined above.} Now, let $\vartheta>0$ small enough so that $\E( e^{\vartheta T})<\infty$. Then, for any constant $c>0$, we write
\begin{multline*}
 \P ( \beta_{\ell+1} - \beta_{\ell} \geq n \mid {\cal G}_{\ell} ) \\
\begin{aligned}
&\leq
\P \Big( \sum_{j= 0}^{\lfloor cn \rfloor }
\tau_{ \gamma_{\ell} + j+1} - \tau_{\gamma_{\ell} +j} \geq n \mid {\cal G}_{\ell}  \Big)
+ \P \big( G > \lfloor cn \rfloor
\mid {\cal G}_{\ell}  \big)  \\
& \leq  \P \Big( \sum_{j=0}^{\lfloor cn \rfloor}  T_ j \geq n ) + (1-p_1)^{\lfloor cn \rfloor} \\
& \leq e^{-\vartheta n} \E( e^{\vartheta T})^{\lfloor cn \rfloor}+ (1-p_1)^{\lfloor cn \rfloor} ~.
\end{aligned}
\end{multline*}
This completes the proof of \eqref{Beta_tail} by choosing $c=c(\vartheta)$ sufficiently small.
\end{proof}

\subsection{Size of the renewal blocks (for $k$ paths)}
\label{sect:Widths}

{Let $\u_1,\ldots,\u_k$ be the starting points and fix $\ell\geq 0$. In this section, our goal is to exhibit random rectangles containing the regions explored by the $k$ trajectories of the joint exploration process $\{(g_n(\u_1),\ldots,g_n(\u_k)) : n\geq 0\}$  between the $\ell$-th and the $(\ell+1)$-th renewal steps. We define
\begin{equation}
\label{def:WidthJtRegenration}
W_{\ell+1} = W_{\ell+1}(\u_1,\cdots,\u_k) := \sum^{\beta_{\ell+1}-1}_{m=\beta_{\ell}}
\| W^{\text{move}}_{m} - h(W^{\text{move}}_{m}) \|_2 ~.
\end{equation}}
By construction the r.v. $W_{\ell+1}$ is such that, for any $1\leq i\leq k$, the random set
\begin{equation}
\label{EnsFonda}
\bigcup_{m=\beta_{\ell}}^{\beta_{\ell+1}-1} B^+ \left( g_m(\u_i) , \| g_m(\u_i) - g_{m+1}(\u_i) \|_2 \right) ~,
\end{equation}
where the union is made up with all the semi-balls created by the path starting at $\u_i$ between the $\ell$-th and the $(\ell+1)$-th renewal steps, is included in $g_{\beta_{\ell}}(\u_i)+[-W_{\ell+1},W_{\ell+1}] \times [0,W_{\ell+1}]$. This rectangle is called a \textit{renewal block}. This is the reason why $W_{\ell+1}$ is termed as the \textit{size} of these renewal blocks.

It is important to remark that the $k$ trajectories between the $\ell$-th and the $(\ell+1)$-th renewal steps depend only on the Poisson points inside the random set (\ref{EnsFonda}). Hence, these $k$ trajectories are not altered by any change of the
PPP $\NN$ outside the renewal blocks $g_{\beta_{\ell}}(\u_i)+[-W_{\ell+1},W_{\ell+1}]\times [0,W_{\ell+1}]$'s. This suggests that  two paths far from each other evolve almost independently; such argument will be used in the proof of Theorem \ref{thm:CoalescingTimetail}.

\begin{proposition}
\label{prop:IIDMaxSubExpTail}
There exist constants $C_0,C_1>0$ such that for any $\ell\geq 0$, for all $n \geq 1$,
\begin{equation}
\label{eq:JtRegWidthExpTail}
\P( W_{\ell+1} \geq n \ |\ {\cal G}_{\ell} ) \leq C_0 e^{-C_1 n^{1/2}} ~.
\end{equation}
\end{proposition}

This shows that the conditional distribution of $W_{\ell+1} \ |\ {\cal G}_{\ell}$ is dominated by a random variable $ {\cal W} $ with sub-exponential tail given by $C_0 e^{-C_1 n^{1/2}}$. With more works, it could be possible to show that the distribution of ${\cal W}$ admits an exponentially decaying tail, but (\ref{eq:JtRegWidthExpTail}) will be sufficient for our purpose.

\begin{proof}
Let $\ell\geq 0$. We will work conditionally on ${\cal G}_{\ell}$. Let us recall the definition of the random variables $\{\zeta_{m+1} : m\geq 0\}$ in (\ref{zeta(1)_n}) which are i.i.d. with exponentially decaying tails. Let us now show by recursion that, for any $m\geq 0$,
\begin{equation}
\label{inductionWl}
\| W^{\text{move}}_{\beta_{\ell}+m} - h(W^{\text{move}}_{\beta_{\ell}+m}) \|_2 \leq \max_{0\leq n\leq m}
(\lfloor 2\zeta_{n+1}\rfloor + 1) + \kappa + 1 ~.
\end{equation}
First, \eqref{inductionWl} holds for $m=0$ since, on the renewal event,
\begin{equation*}
\| g_{\beta_{\ell}} ( \u_i) - h (g_{\beta_{\ell}}(\u_i)) \|_2 \leq \kappa + 1 ~.
\end{equation*}
Thus, assume that \eqref{inductionWl} holds for a given integer $m$. If
$$
\|W^{\text{move}}_{\beta_{\ell}+m+1} - h(W^{\text{move}}_{\beta_{\ell}+m+1}) \|_2 \leq \max_{0\leq n\leq m} \|W^{\text{move}}_{\beta_{\ell}+n} - h(W^{\text{move}}_{\beta_{\ell}+n})\|_2 ~,
$$
then \eqref{inductionWl} is obviously satisfied for $m+1$. Otherwise, we have
$$
\|W^{\text{move}}_{\beta_{\ell}+m+1} - h(W^{\text{move}}_{\beta_{\ell}+m+1})\|_2 > \max_{0\leq n\leq m} \|W^{\text{move}}_{\beta_{\ell}+n} - h(W^{\text{move}}_{\beta_{\ell}+n})\|_2 \geq L(H_{\beta_{\ell}+m}) ~,
$$
which forces, via similar arguments to those developed in the proof of Lemma \ref{lem:IncreaseLn}, part $(i)$, that
$$
\|W^{\text{move}}_{\beta_{\ell}+m+1} - h(W^{\text{move}}_{\beta_{\ell}+m+1})\|_2 \leq \lfloor 2\zeta_{m+2} \rfloor + 1 ~.
$$
This concludes the proof of \eqref{inductionWl}.

The fact that the r.v.'s $\{\max_{n\leq m}\lfloor 2\zeta_{n+1}\rfloor + \kappa + 2 : m\in \Z_+\}$ are not identically distributed prevents us from immediately obtaining exponential decay for the r.v. $ W_{\ell+1} \mid {\cal G}_{\ell}$. So we content ourself with the following computation leading to sub-exponential decay. First,
\begin{equation}
\label{W-FinalComput}
\begin{split}
\P(W_{\ell+1} \geq n \mid {\cal G}_{\ell}) & \leq \P \left(\sum_{m=0}^{\lfloor n^{1/2}\rfloor}
\max_{l\leq m}(\lfloor 2\zeta_{l+1}\rfloor + \kappa + 2) \geq n \right) \\
& + \P( \beta_{\ell+1} - \beta_{\ell} \geq n^{1/2} \mid {\cal G}_{\ell}) ~.
\end{split}
\end{equation}
The second term of the l.h.s. of (\ref{W-FinalComput}) is bounded from above
by $C_0 e^{-C_1 n^{1/2}}$ thanks to Proposition \ref{propo:GoodRegStepTail}
while the first one is treated as follows:
\begin{align*}
& \P \left( \sum_{m=0}^{\lfloor n^{1/2}\rfloor} \max_{l\leq m}(\lfloor
2\zeta_{l+1}\rfloor + \kappa  + 2) \geq n \right)  \\
& \leq  \P \left( \bigcup_{m\leq \lfloor n^{1/2}\rfloor}
\big\{ \lfloor 2\zeta_{m+1}\rfloor  + \kappa  + 2 \geq n^{1/2}-1 \big\} \right) \\
& \leq  ( \lfloor n^{1/2}\rfloor + 1 ) \P \left( \lfloor 2\zeta_{1}\rfloor + \kappa
+ 2 \geq n^{1/2}-1 \right) \\
& \leq  ( \lfloor n^{1/2}\rfloor + 1 ) C_0 e^{-C_1 (n^{1/2} - \kappa -3)}
\end{align*}
by (\ref{X^(1)ExpoTail}). We conclude by adjusting the constants $C_0,C_1>0$.
\end{proof}

\subsection{Renewals for a single path ($k=1$)}
\label{sect:OnePath}

Consider the path started from the vertex $\u_1$. Suppose that we are at the $j$-th good step $\tau_j$ and let us set
$$
\y_1 := g_{\tau_j}(\u_1) = g_{\tau_j}^{\downarrow}(\u_1) = \y_{1}^{\downarrow} \, \mbox{ and} \, \y_{1}^{\uparrow} := g_{\tau_j}^{\uparrow}(\u_1) ~.
$$
The realization of the renewal event $A_j$ means that the semi-ball $B^{+}(\y_{1}^{\downarrow},\kappa+1)$ contains exactly one Poisson point, say $X$, which is actually included in $B^{+}(\y_{1}^{\uparrow},1)$. It is important to remark that conditionally to the occurrence of $A_j$, the location of the Poisson point $X$ is completely free inside $B^{+}(\y_{1}^{\uparrow},1)$ and it is uniformly distributed on $B^{+}(\y_{1}^{\uparrow},1)$. We call such a good step a \textit{renewal step}. Let $\tau_{j^{\prime}}$ with $j'>j$ be another good step which is the next renewal step after $\tau_j$. Let $\y_2:=g_{\tau_{j^{\prime}}}(\u_1)$. On the renewal event $A_j$, the vertex $\y_1$ is connected in one step to the (unique) Poisson point $X$ inside $B^{+}(\y_1^{\uparrow},1)$. Now let us consider a new DSF path, called a \textit{regenerated path}, starting from the projected point $\y_1^{\uparrow}$. Like $\y_1$, the projected point $\y_1^{\uparrow}$ is also connected in one step to $X$ so that the original path (from $\u_1$) and the regenerated path (from $\y_1$) coincide beyond $X$. See Figure \ref{fig:renewalEventAndz0}.

\begin{figure}[httb]
\begin{center}
\psset{unit=0.9}
\begin{pspicture}(-2,-0.5)(12.5,5)
\psarc[fillcolor=gray,fillstyle=solid](0.5,0.5){1.4}{0}{180}
\psarc[fillcolor=white,fillstyle=solid](0.5,1.5){0.4}{0}{180}
\psarc[fillcolor=gray,fillstyle=solid](2.5,3.3){1.4}{0}{180}
\psarc[fillcolor=white,fillstyle=solid](2.5,4.3){0.4}{0}{180}
\psdots[dotsize=0.15,fillstyle=solid](0,0)
\psdots[dotsize=0.15,fillstyle=solid](0.5,0.5)
\psdots[dotsize=0.15,fillstyle=solid](0.68,1.65)
\psdots[dotsize=0.15,fillstyle=solid](2.68,2)
\psdots[dotsize=0.15,fillstyle=solid](3,2.5)
\psdots[dotsize=0.15,fillstyle=solid](2.5,3.3)
\psdots[dotsize=0.15,fillstyle=solid](2.2,4.4)
\psdots[dotsize=0.15, linecolor = red](0.5,1.5)
\psdots[dotsize=0.15, linecolor=red](2.5,4.3)
\psline(0,0)(.5,.5)
\psline(0.5,.5)(0.68,1.65)
\psline(0.68,1.65)(2.68,2)
\psline(2.68,2)(3,2.5)
\psline(3,2.5)(2.5,3.3)
\psline[linestyle=dashed,linecolor=red](0.5,1.5)(0.68,1.65)
\psarc(2.68,2){.60}{0}{180}
\psarc(0,0){.71}{0}{180}
\psarc(0.68,1.65){2.03}{0}{180}
\psarc(3,2.5){0.97}{0}{180}
\rput[tr](0.5,1.45){$\y_1^{\uparrow}$} 
\rput(0.5,0.2){$\mathbf{y}_1$}
\rput[tl](2.7,3.2){$\mathbf{y}_2$}
\psline[linewidth=0.5pt](-1.7,1.5)(3.5,1.5)
\psline[linewidth=0.5pt](-1.7,0.5)(3.5,0.5)
\psline[linewidth=0.5pt]{<->}(3.4,0.5)(3.4,1.5)
\rput(3.6,1){$\kappa$}
\psline[linewidth=0.5pt](0,3.3)(5,3.3)
\psline[linewidth=0.5pt](0,4.3)(5,4.3)
\psline[linewidth=0.5pt]{<->}(4.9,3.3)(4.9,4.3)
\rput(5.1,3.8){$\kappa$}
\psarc[fillcolor=gray,fillstyle=solid](7.5,0.5){1.4}{0}{180}
\psarc[fillcolor=white,fillstyle=solid](7.5,1.5){0.4}{0}{180}
\psarc[fillcolor=gray,fillstyle=solid](9.5,3.3){1.4}{0}{180}
\psarc[fillcolor=white,fillstyle=solid](9.5,4.3){0.4}{0}{180}
\psline[linewidth=0.5pt](7,3.3)(11.5,3.3)
\psline[linewidth=0.5pt](7,4.3)(11.5,4.3)
\psline[linewidth=0.5pt]{<->}(11.4,3.3)(11.4,4.3)
\rput(11.6,3.8){$\kappa$}
\psdots[dotsize=0.15,fillstyle=solid](9.68,2)
\psline(7.68,1.65)(9.68,2)
\psarc(7.68,1.65){2.03}{0}{180}
\psframe*[linecolor=white](5,0)(9.5,1.5)
\psdots[dotsize=0.15,fillstyle=solid](7.68,1.65)
\psline(7.5,1.5)(7.68,1.65)
\psdots[dotsize=0.15,fillstyle=solid](10,2.5)
\psline(9.68,2)(10,2.5)
\psarc(9.68,2){.60}{0}{180}
\psdots[dotsize=0.15,fillstyle=solid](9.5,3.3)
\psline(10,2.5)(9.5,3.3)
\psarc(10,2.5){0.97}{0}{180}
\psdots[dotsize=0.15,fillstyle=solid](9.2,4.4)
\psdots[dotsize=0.15,linecolor=red](7.5,1.5)
\psdots[dotsize=0.15,linecolor=red](9.5,4.3)
\psline[linewidth=0.5pt](5.3,1.5)(10.5,1.5)
\rput(7.5,1.2){$(0,0)$}
\rput[tl](9.7,3.2){$\mathbf{z}_0$}
\end{pspicture}
\caption{{\small \textit{Left hand picture represents the two successive renewal
steps with positions of the path being $\y_1$ and $\y_2$ respectively at those
steps. The new path started from $ \y_1^{\uparrow} $ continues together with
the original path. The right hand picture shows how translating to the origin
provides the distribution of the increment $ \z_0 $.
}}}
\label{fig:renewalEventAndz0}
\end{center}
\end{figure}

{Let us remark that by construction the two semi-balls generated by the connections of $\y_1$ and $\y_1^{\uparrow}$ to the same ancestor $X\in B^{+}(\y_1^{\uparrow},1)$ are included in $B^{+}(\y_1,\kappa+1)$. So no information about the PPP
$$
\NN \cap \left( \mathbb{H}^+(\y_1^{\uparrow}) \setminus B^{+}(\y_1,\kappa+1) \right)
$$
is revealed.}

Thus, let us think of the projected point $\y_1^{\uparrow}$ as the origin. So, by translation invariance of the Poisson process, the evolution of the regenerated path from $\y_1^{\uparrow}$ till $\tau_{j^{\prime}}$ (the next renewal step) can be constructed as follows. Start a path from ${\bf 0}$ until the occurrence of the first renewal event with the following set of initial information:
\begin{itemize}
\item[a)] Distribute a single point uniformly in $B^{+}({\bf 0},1)$;
\item[b)] The set $ (B^{+} ( (0, -\kappa), \kappa +1) \setminus
B^{+} ( {\bf 0},  1)) \cap \mathbb{H}^+ (0) $ has no point;
\item [c)] Take an independent Poisson process on $ \mathbb{H}^+ (0) \setminus (B^{+} ( (0, -\kappa), \kappa +1) $.
\end{itemize}
Let $\z_{\bf 0}$ be the position of the above path be at the first renewal step (see Figure \ref{fig:renewalEventAndz0}). Now, translate this path to the projected point $\y_1^{\uparrow} $ to get the position of the next renewal. Therefore, we must have that
\begin{equation}
\label{eqn:IncrementDistrn}
\y_2 - \y_1 \eqdist \z_{\bf 0}  + (0, \kappa ) .
\end{equation}
Let us show the latter rigorously. Consider the process $\{g_{\beta_\ell}(\u_1) : \ell \geq 0\}$, where the renewal times $\beta_\ell$ have been introduced in \eqref{def:betaell}. Define
\begin{equation}
\label{eq:SinglePtRwalk}
Y_{\ell} = Y_{\ell}(\u_1) := g_{\beta_\ell}(\u_1) \text{ for }\ell \geq 0.
\end{equation}

\begin{prop}
\label{prop:SinglePtRWalk}
The process $\{ Y_{\ell} - Y_{\ell - 1} : \ell\geq 2\}$ is a sequence
of i.i.d. random vectors, whose distribution is given by
$ \z_0 + (0, \kappa) $ where $ \z_0$ is as defined
in equation (\ref{eqn:IncrementDistrn}).
\end{prop}

Because $\u_1$ does not benefit from a renewal environnement, the first increment $Y_1-Y_0=Y_1-\u_1$ is not distributed according to the other increments $Y_{\ell}-Y_{\ell - 1}$, for $\ell\geq 2$. Since the definition of $ \z_0 $ does not depend on the starting vertex $\u_1$, the same holds for the increment distribution, for $\ell\geq 2$.

\begin{proof}
Fix $ m \geq 3 $ and Borel subsets $ B_2, \dotsc, B_m $ of $ \R^2$. Let
$ I_{\ell} ( B_{\ell} ) $ be the indicator random variable of the event $ \{
Y_{\ell} - Y_{\ell - 1} \in B_{\ell } \} $. Then, we have
\begin{align*}
& \P (  Y_{\ell} - Y_{\ell - 1} \in B_{\ell } \text{ for } \ell =  2, \dotsc, m )
= \E (  \prod_{ \ell = 2}^{m } I_{\ell} ( B_{\ell} )) \\
&  = \E \Bigl( \E  \bigl( \prod_{ \ell = 2}^{m } I_{\ell} ( B_{\ell} )
\mid {\cal G}_{m-1} \bigl) \Bigr) = \E \Bigl( \prod_{ \ell = 2}^{m-1 } I_{\ell} ( B_{\ell} )
\E  \bigl(  I_{m} ( B_{m} ) \mid {\cal G}_{m-1} \bigl) \Bigr)
\end{align*}
as the random variables $ I_{\ell} ( B_{\ell} )  $ are measurable w.r.t. $ {\cal G}_{m-1} $
for $ \ell =  2, \dotsc, m-1 $. Note that the $\sigma$-algebra
${\cal G}_{m-1} = {\cal G}_{m-1}(\u_1)$
contains the information brought by the single path started at $\u_1$ until its
$(m-1)$-th renewal step.

Since any renewal step is first a good step, history regions are bounded
below the horizontal line  $ y =  g_{\beta_{m-1}}(\u_1)(2) + \kappa $. Therefore only
information of relevance carried at the $(m-1)$-th renewal step for constructing the path from
$ g^{\uparrow}_{\beta_{m-1}}(\u_1) $  is that
there is exactly one point in $ B^{+} ( g^{\uparrow}_{\beta_{m-1}}(\u_1), 1) $
(hence it must be uniformly distributed inside $ B^{+} ( g_{\beta_{m-1}}^{\uparrow}(\u_1), 1) $)
and no point in $  (B^{+} ( g_{\beta_{m-1}}^{\downarrow}(\u_1), \kappa+ 1)
\setminus B^{+} ( g^{\uparrow}_{\beta_{m-1}}(\u_1), 1) )
\cap {\mathbb H}^{+} ( g^{\uparrow}_{\beta_{m-1}}(\u_1)(2) ) $.
Therefore, all information that is used
to construct the path till the good step resulting in the $(m-1)$-th renewal
is no more required (see  Figure \ref{fig:renewalEventAndz0}).

Together with the properties of the PPP, these observations allow us
to use the discussion before equation (\ref{eqn:IncrementDistrn}) to say that
the conditional distribution of $ g_{\beta_m}(\u_1) - g^{\uparrow}_{\beta_{m-1}}(\u_1) $
given $ {\cal G}_{m-1}$ is given by $ \z_0 $. Therefore, we have
$$  g_{\beta_m}(\u_1) - g_{\beta_{m-1}}(\u_1) \mid {\cal G}_{m-1}(\u_1)  \eqdist
\z_0 + (0, \kappa) $$
so that we obtain
\begin{align*}
& \P (  Y_{\ell} - Y_{\ell - 1} \in B_{\ell } \text{ for } \ell =  2, \dotsc, m )
=  \E \Bigl( \prod_{ \ell = 2}^{m-1 } I_{\ell} ( B_{\ell} )
\E  \bigl(  I_{m} ( B_{m} ) \mid {\cal G}_{m-1} \bigl) \Bigr) \\
& =
\P ( (\z_0 + (0, \kappa) \in B_m ) \E \Bigl( \prod_{ \ell = 2}^{m - 1 } I_{\ell} ( B_{\ell} ) \Bigr).
\end{align*}
Now, induction on $m$ completes the proof.
\end{proof}

The distribution of $ \z_{\bf 0} $ depends only on the  uniformly
distributed point in $  B^{+} ( {\bf 0},  1) $ and an independent Poisson process
on $ \mathbb{H}^+ (0) \setminus B^{+} ( (0, -\kappa), \kappa +1) $.
Therefore, from the symmetry property of Poisson process and symmetry of the regions
considered above (by reflecting the configuration with respect to the $ y$-axis), we have
that the first coordinate of $ \z_{\bf 0} $ has a \textit{symmetric} distribution, which we sum up in the:

\begin{corollary}
\label{cor:FirstCoOrdRW}
The process $\{ Y_{\ell+1}(1) - Y_{\ell }(1) : \ell\geq 1\}$ is a sequence
of i.i.d. random variables, whose distribution is symmetric and independent of
the starting point.
\end{corollary}

In the next section, we will show that the increment random variable $Y_{\ell+1}-Y_\ell$ has moments of all orders. This and Corollary \ref{cor:FirstCoOrdRW} imply that a (single) diffusively scaled DSF path converges to the Brownian motion.

\subsection{Renewal for two paths ($k=2$)}
\label{sect:TwoPaths}

Let us consider two starting points $\u_1,\u_2$ with $\u_2(1)\geq\u_1(1)$ and $|\u_2(2) - \u_1(2)|\leq \kappa$. Like the single path case, our idea is again to start two regenerated paths from the projected points $g_{\beta_{\ell}}^{\uparrow}(\u_1)$ and $g_{\beta_{\ell}}^{\uparrow}(\u_2)$. For $\ell\geq 0$ and $i\in\{1,2\}$, we define
$$
Y_{\ell}(\u_i) := g_{\beta_{\ell}}^{\uparrow}(\u_i) ~.
$$
Moreover the random variable $Z_\ell$ denotes the distance between the two trajectories started at $\u_1$ and $\u_2$ projected on the horizontal axis and at the $\ell$-th renewal step:
\begin{equation}
\label{def:ZProcess}
Z_{\ell} = Z_\ell(\u_1, \u_2) :=   g_{\beta_{\ell}}^{\uparrow}(\u_2)(1) - g_{\beta_{\ell}}^{\uparrow}(\u_1)(1) ~.
\end{equation}
As the DSF paths are non-crossing, the process $\{Z_{\ell} : \ell \geq 0\}$ is non-negative with zero being an absorbing state.

Writting
\begin{eqnarray*}
| Z_{\ell+1} - Z_{\ell} | & \leq & | g_{\beta_{\ell+1 }}^{\uparrow}(\u_2)(1) - g_{\beta_{\ell}}^{\uparrow}(\u_2)(1) | + | g_{\beta_{\ell+1 }}^{\uparrow}(\u_1)(1) - g_{\beta_{\ell}}^{\uparrow}(\u_1)(1) | \\
& \leq & \| Y_{\ell+1}(\u_2) - Y_{\ell}(\u_2) \|_{2} + \| Y_{\ell+1}(\u_1) - Y_{\ell}(\u_1) \|_{2} \\
& \leq & 2 W_{\ell+1} ~,
\end{eqnarray*}
where $W_{\ell+1}$ is the size of the $(\ell+1)$-th renewal block, defined in (\ref{def:WidthJtRegenration}), we immediatly deduce from Proposition \ref{prop:IIDMaxSubExpTail} the next result.

\begin{corol}
\label{cor:IncrementMoments}
For any two vertices $\u_1, \u_2 \in \R^2$ with $\u_1(1) < \u_2(1)$ and $|\u_1(2) - \u_2(2)| \leq \kappa$,
\begin{itemize}
\item[(1)] the random variables $Y_{\ell+1}(\u_i)-Y_{\ell}(\u_i)$, for $\ell\geq 0$ and $i\in\{1, 2\}$ have moments of all orders and
\item[(2)] the increments of the process $\{ Z_\ell : \ell \geq 0 \}$ have moments of all orders.
\end{itemize}
\end{corol}

There is a crucial difference between the single path case and the two paths case. Indeed, at the $\ell$-th renewal step, if the distance between $g_{\beta_{\ell}}(\u_1)$ and $g_{\beta_{\ell}}(\u_2)$ is smaller than $\kappa+1$, {it may happen that the two paths coalesce during the next step: see Figure \ref{fig:Aj}. When this is the case, the two original paths (from $\u_1$ and $\u_2$) coincide beyond this renewal step and are no longer equal to the two regenerated paths (from $g_{\beta_{\ell}}^{\uparrow}(\u_1)$ and $g_{\beta_{\ell}}^{\uparrow}(\u_2)$). They are actually equal to one of the regenerated paths. This means in particular that coalescence between the two original paths may occur before that the process $\{Z_{\ell} : \ell \geq 0\}$ hits zero.}

\begin{figure}[httb]
\begin{center}
\psset{unit=1.3}
\begin{pspicture}(-3,-0.5)(5,2)
\psarc[fillcolor=gray,fillstyle=solid](0.5,0.5){1.4}{0}{180}
\psarc[fillcolor=gray,fillstyle=solid](1.35,0.5){1.4}{0}{180}
\psarc[fillcolor=white,fillstyle=solid](0.5,1.5){0.4}{0}{180}
\psarc[fillcolor=white,fillstyle=solid](1.35,1.5){0.4}{0}{180}
\psarc(0.5,0.5){1.4}{0}{180}
\psarc(1.35,0.5){1.4}{0}{180}
\psdots[dotsize=0.15,fillstyle=solid](2.1,0.08)
\psdots[dotsize=0.15,fillstyle=solid](1.35,0.7)
\psline(2.1,0.08)(1.35,0.7)
\psarc(2.1,0.08){.965}{0}{180}
\psline(0.5,0.5)(1.35,0.7)
\psdots[dotsize=0.15,fillstyle=solid](1.5,1.7)
\psline(1.35,0.7)(1.5,1.7)
\psline[linecolor = red](1.35,1.5)(1.5,1.7)
\psdots[dotsize=0.15,fillstyle=solid](1.35,0.7)
\psline(0,0)(.5,.5)
\psline(-2.7,0.5)(5,0.5)
\psline(-2.7,1.5)(5,1.5)
\rput(0.1,0.27){$g_{\beta}(\mathbf{u}_1)$}
\rput(1.03,0.9){$g_{\beta}(\mathbf{u}_2)$}
\psline{<->}(5,0.5)(5,1.5)
\rput(5.2,1){$\kappa$}
\psdots[dotsize=0.15,fillstyle=solid](0,0)
\psdots[dotsize=0.15,fillstyle=solid](0.5,0.5)
\psdots[dotsize=0.15,fillstyle=solid](0.38,1.65)
\psarc(0,0){.71}{0}{180}
\psarc(0.5,0.5){.87}{0}{180}
\psdots[dotsize=0.15, linecolor = red](0.5,1.5)
\psdots[dotsize=0.15, linecolor = red](1.35,1.5)
\end{pspicture}
\caption{{\small \textit{This picture represents a renewal step of the joint exploration process $\{(g_n(\u_1), g_n(\u_2),H_n) : n \geq 0\}$ (with only $k=2$ trajectories): the event $A_j$ occurs where $\beta=\tau_j$. The red dots represent $g_{\beta}^{\uparrow}(\u_1)$ and $g_{\beta}^{\uparrow}(\u_2)$.
Observe that the vertex $g_{\beta}(\u_1)$ connects to $g_{\beta}(\u_2)$ and not to the uniformly distributed point in $B^+(g_{\beta}^{\uparrow}(\u_1), 1)$, i.e. the two trajectories of the DSF merge.}}}
\label{fig:Aj}
\end{center}
\end{figure}


However, if the vertices $g_{\beta_{\ell}}(\u_1)$ and $g_{\beta_{\ell}}(\u_2)$ are far away (at least $\kappa+1$), it is easy to observe that the original paths and the regenerated paths starting from the projected points $g_{\beta_{\ell}}^{\uparrow}(\u_1)$ and $g_{\beta_{\ell}}^{\uparrow}(\u_2)$ would proceed together.

{Let us focus now on the case where $Z_{\ell}=g_{\beta_{\ell}}^{\uparrow}(\u_2)(1)-g_{\beta_{\ell}}^{\uparrow}(\u_1)(1)$ is large, precisely on $F_\ell:=\{ W_{\ell+1}\leq Z_\ell/3\}$. On this event, the regions explored by the DSF paths between the $\ell$-th and the $(\ell+1)$-th renewal steps are each included in rectangles centered at the respective projected vertices $g_{\beta_{\ell}}^{\uparrow}(\u_1)$ and $g_{\beta_{\ell}}^{\uparrow}(\u_2)$, and of width $W_{\ell+1}$ smaller than $Z_\ell/3$. So they are disjoint. We can then proceed to the following transformation : we interchange the point configurations of both disjoint rectangles without changing the outside. Let us denote $\NN^{\ast}$ the resulting PPP. This transformation provides:
$$
Z_{\ell+1}(\NN^{\ast}) - Z_{\ell}(\NN^{\ast}) = - \big( Z_{\ell+1}(\NN) - Z_{\ell}(\NN) \big) ~,
$$
i.e. the distribution of the increment $Z_{\ell+1}-Z_{\ell}$ is symmetric on the event $F_\ell=\{ W_{\ell+1}\leq Z_\ell/3\}$. Details are given in the proof below.} This is the main result of Corollary \ref{corol:ZprocessRwalkProperties} and this will be crucially used to obtain the tail decay of the coalescing time. See Section \ref{sec:Tail}.

In other words, the next result says that, far from the origin, the process $\{Z_\ell : \ell \geq 0\}$ behaves like a symmetric random walk satisfying certain moment bounds.

\begin{corol}
\label{corol:ZprocessRwalkProperties}
Fix any two vertices $\u_1,\u_2\in\R^2$ with $\u_1(1)\leq\u_2(1)$ and $|\u_2(2) - \u_1(2)| \leq \kappa$. Then there exist positive constants $M_0, C_0, C_1, C_2$ and $C_3$ such that:
\begin{itemize}
\item[(i)] For any $\ell\geq 0$, let us set $F_\ell:=\{W_{\ell +1} \leq Z_\ell/3\}$. Then, on the event $\{Z_\ell \geq M_0\}$, we have $\P(F^c_\ell \mid {\cal G}_{\ell} ) \leq C_3/(Z_\ell)^3$ and
\begin{equation*}
\E \big[ (Z_{\ell +1} - Z_{\ell}) \mathbf{1}_{F_\ell} \mid {\cal G}_{\ell} \big] = 0 ~.
\end{equation*}
\item[(ii)] For any $\ell \geq 0$, on the event $\{Z_\ell \leq M_0\}$,
\begin{equation*}
\E \big[ (Z_{\ell +1} - Z_{\ell})  \mid {\cal G}_\ell \big] \leq C_0 ~.
\end{equation*}
\item[(iii)] For any $\ell\geq 0$ and $m>0$, there exists $c_m>0$ such that, on the event $\{Z_\ell\in (0, m]\}$,
\begin{equation*}
\P \big( Z_{\ell+1} = 0 \mid {\cal G}_\ell \big) \geq c_m ~.
\end{equation*}
\item[(iv)] For any $\ell\geq 0$, on the event $\{Z_\ell>M_0\}$,
\begin{equation*}
\E \bigl[ ( Z_{\ell+1} - Z_{\ell} )^2 \mid {\cal  G}_\ell \bigr] \geq C_1 \; \text{ and } \;
\E \bigl[ |Z_{\ell+1} - Z_{\ell} |^3 \mid {\cal  G}_\ell  \bigr] \leq C_2 ~.
\end{equation*}
\end{itemize}
\end{corol}

\begin{proof}
For part (i), take $M_0$ sufficiently large and Proposition \ref{prop:IIDMaxSubExpTail}
gives us that on the event $\{Z_\ell \geq M_0\}$, we have $\P(F^c_\ell \mid {\cal G}_\ell)
\leq C_3/(Z_\ell)^3$ for some positive constant $C_3$.

Now let us consider the trajectories of two paths starting from the vertices $\u_1$ and $\u_2$ between the $\ell$-th and $(\ell + 1)$-th renewal steps. Note that the trajectories of these regenerated paths can be constructed with a resampled PPP over the region $\mathbb{H}^+(W^\text{move}_{\beta_\ell}(2)+\kappa)\setminus \bigl( B^+(g^{\downarrow}_{\beta_\ell} (\u_1), \kappa + 1)\cup B^+(g^{\downarrow}_{\beta_\ell} (\u_2), \kappa + 1) \bigr)$ and resampled independent uniform distributions over the (disjoint) semi-balls $B^+(g^{\uparrow}_{\beta_\ell}(\u_1),1)$ and $B^+(g^{\uparrow}_{\beta_\ell}(\u_2),1)$ without changing the joint distribution of the trajectories. Recall that there are no Poisson points in the region
$$\bigcup_{i=1}^2 \bigl (\mathbb{H}^+(W^\text{move}_{\beta_{\ell}}(2) + \kappa ) \cap B^+(g^{\downarrow}_{\beta_\ell} (\u_i), \kappa + 1) \bigr )\setminus B^+(g^{\uparrow}_{\beta_\ell} (\u_i),  1) ~.
$$
We construct a new point process in the following way:
\begin{itemize}
\item[(1)] Given ${\cal G}_\ell$, the realizations of the point process in the rectangles
$R_1 = g^{\uparrow}_{\beta_\ell}(\u_1) + [- Z_\ell / 3, Z_\ell / 3] \times
[ 0, Z_\ell  / 3] $ and  $R_2 = g^{\uparrow}_{\beta_\ell}(\u_2) + [- Z_\ell / 3, Z_\ell / 3]
\times [ 0, Z_\ell  / 3] $ are interchanged.
\item[(2)] The realization of the PPP $\NN$ outside these two rectangles is kept as it is.
\end{itemize}
We should note that both these rectangles $ R_1 $ and $ R_2 $ contain the unique Poisson point uniformly distributed over the semi-balls $B^+(g^{\uparrow}_{\beta_\ell} (\u_1), 1)$ and $B^+(g^{\uparrow}_{\beta_\ell} (\u_2), 1)$ as well. We observe that the newly constructed point process $\NN^{\ast}$ has the same distribution as a PPP conditioned to have a unique point in the
semi-balls $B^+(g^{\uparrow}_{\beta_\ell} (\u_i), 1)$ and no point in the regions $\mathbb{H}^+(W^\text{move}_{\beta_{\ell}}(2) + \kappa ) \cap B^+(g^{\downarrow}_{\beta_\ell}(\u_i),\kappa+1)\bigr) \setminus B^+(g^{\uparrow}_{\beta_\ell}(\u_i),1)$ for $i=1,2$.

Now, we restrict our attention to the event $F_\ell = \{W_{\ell + 1} \leq Z_\ell /3\}$ and consider the trajectories in between $\ell$-th and $(\ell+1)$-th renewal steps using the newly constructed PPP $\NN^{\ast}$. We remark that for this ``new'' regenerated paths, the number of steps until the next renewal step and the size of the corresponding renewal block have not changed. Moreover, the increment of each path between the $\ell$-th and the $(\ell+1)$-th renewal steps have been interchanged. This means that the increment $Z_{\ell+1}-Z_{\ell}$ is become $-(Z_{\ell+1}-Z_{\ell})$. This completes the proof of Item $(i)$.

Item $(ii)$ follows readily from the fact that
\begin{align*}
\E[(Z_{\ell + 1} - Z_\ell) \mid {\cal G}_\ell ] \leq \E[|Z_{\ell + 1} - Z_\ell| \mid {\cal G}_\ell ] \leq \E(2 W_{\ell + 1} \mid {\cal G}_\ell) < \infty ~,
\end{align*}
since conditionnaly on ${\cal G}_\ell$, $W_{\ell+1}$ admits sub-exponential decay (see Proposition \ref{prop:IIDMaxSubExpTail} for details).

For Item $(iii)$, we recall the fact that the $\sigma$-field ${\cal G}_\ell$ does not contain any information about the PPP in  the region $\mathbb {H}^+(W^{\text{move}}_{\beta_\ell}(2) + \kappa) \setminus \bigl ( B^+(g^{\downarrow}_{\beta_\ell} (\u_1), \kappa + 1) \cup B^+(g^{\downarrow}_{\beta_\ell} (\u_2), \kappa + 1)\bigr )$. Hence, it is not difficult to convince oneself that the  conditional probability $\P(Z_{\ell + 1} = 0 \mid {\cal G}_\ell)$ is strictly positive (suitable configurations are easy to build).

It then remains to check Item $(iv)$. We observe that
\begin{align*}
\E \bigr [ (Z_{\ell + 1} - Z_\ell)^2 \mid {\cal G}_\ell \bigl] & \geq
\E \bigr [ (Z_{\ell + 1} - Z_\ell)^2\mathbf{1}_{ ( |Z_{\ell + 1} - Z_\ell |^2 \geq 1 )} \mid {\cal G}_\ell \bigl]\\
& \geq \P((Z_{\ell + 1} - Z_\ell )^2 \geq 1 \mid {\cal G}_\ell) ~.
\end{align*}
Again on the event $\{Z_\ell > M_0\}$, it is not difficult to observe that the probability $\P((Z_{\ell + 1} - Z_\ell )^2 \geq 1 \mid {\cal G}_\ell)$ is  strictly positive. For the third moment,
\begin{align*}
\E[(Z_{\ell + 1} - Z_\ell)^3 \mid {\cal  G}_\ell ] \leq \E[|Z_{\ell + 1} - Z_\ell|^3 \mid {\cal G}_\ell ] \leq \E((2 W_{\ell + 1})^3\mid {\cal G}_\ell ) < \infty ~.
\end{align*}
\end{proof}

\section{Tail distribution for the coalescence time of two paths}
\label{sec:Tail}

In this section we start with two points $\u_1,\u_2$  in $\R^{2}$ such that $\u_1(1)<\u_2(1)$ and $\u_1(2)=\u_2(2)=0$. The initial history set $H_0$ is assumed empty. As explained in the introduction, a key result for proving the convergence of the DSF to the BW, lies in a precise estimate for the tail distribution of the coalescence time of two paths of the DSF:
 \begin{equation}
 \label{CoalTime:u1u2}
 T(\u_1,\u_2) := \inf \{t\geq 0 : \, \pi^{\u_1}(t) = \pi^{\u_2}(t) \}
 \end{equation}
 where $\pi^{\u_i}=(\pi^{\u_i}(t))_{t\geq 0}$ denotes the parametrization of the path $\pi^{\u_i}$. This random time is known to be almost surely finite \cite{CT13}.
 In this section we prove the following theorem on tail decay of coalescing time $T(\u_1, \u_2)$ of two DSF paths $\pi^{\u_1}$ and $\pi^{\u_2}$.

\begin{theorem}
\label{thm:CoalescingTimetail}
Assume $\u_2(1)-\u_1(1) > 0$ and $\u_2 (2) = \u_1(2) = 0$.
There exists a constant $C_{0}>0$ which does not depend on $\u_1,\u_2$ such that, for any $t>0$,
$$
\P(T(\u_1,\u_2) > t ) \leq {\frac{C_{0}}{\sqrt{t}} \max\{ 1 , \u_2(1)-\u_1(1) \} ~.}
$$
\end{theorem}

In order to prove Theorem \ref{thm:CoalescingTimetail} we develop a robust technique to obtain such an estimate for certain class of processes which need not be Markov and can be applied to a large class of models (see Remark \ref{rmk:GeneralLaplace}). We will show that processes which behave like symmetric random walks away from the origin and satisfy certain moment bounds belong to this class (for a precise statement we refer to Corollary \ref{corol:LaplaceCoalescingTimeTailNew3}). As a consequence, Corollary \ref{corol:ZprocessRwalkProperties} allows us to apply this technique for the DSF paths and gives a suitable tail decay in terms of number of renewal steps. With some additional work, we obtain the tail distribution of coalescing time.

{For the sequel, it will be crucial that the factor $\u_{2}(1)-\u_{1}(1)$ occurs in the upper bound of $\P(T(\u_1,\u_2)>t)$. Theorem \ref{thm:CoalescingTimetail} will be applied, in the proofs of criteria $(ii)$ and $(iv)$ of Theorem \ref{thm:JtConvBwebGenPath}, to starting points satisfying at each time $\u_{2}(1)-\u_{1}(1) \geq 1$.}

\subsection{A general result for upper bounding hitting time tails}
\label{sec:HittingTimeEst}

For this section, we introduce the following  notation: for a {discrete-time} process $ \{ Y_t : t \geq 0 \}$
taking non-negative values, let $ \nu^Y $ be the first hitting time to $0$, i.e.,
\begin{equation}
\label{def:genericHittingTime}
\nu^Y := \inf\{t \geq 1 : Y_t = 0\}.
\end{equation}

In this section we obtain tail decay for the hitting time of $0$ for certain class of processes
 which need not to be Markov. To start with we assume that the process is supermartingale.

 \begin{theorem}
 \label{thm:LaplaceCoalescingTimeTailNew}
 Let $\{Y_t : t \geq 0\}$ be a $\{{\cal G}_t : t \geq 0\}$ discrete-time adapted
 stochastic process taking values in $\R_+$. Suppose that:
 \begin{itemize}
 \item[(i)] For any $t\geq 0$,
 \begin{equation*}
  \E\big[(Y_{t+1}-Y_t) \mid {\cal G}_t \big] \leq 0 \text{ a.s.}
 \end{equation*}

 \item[(ii)] There exist constants $C_0,C_1>0$ such that for any $t \geq 0$,
 we have
 \begin{equation*}
 \E \bigl[ (Y_{t+1} - Y_t)^2 \mid {\cal  G}_t \bigr] \geq C_0 \; \text{ and } \;
 \E \bigl[ |Y_{t+1} - Y_t|^3 \mid {\cal  G}_t \bigr] \leq C_1,
 \end{equation*}
 on the event $\{Y_t>0\}$.
  \end{itemize}
 Then, $ \nu^Y  < \infty $ almost surely. Further,  there exists a constant $C_2>0$
 such that for any $y>0$ and any integer $n$,
 \begin{equation*}
 \P( \nu^Y > n \mid Y_0 = y) \leq \frac{C_2 y}{\sqrt{n}} .
 \end{equation*}
 \end{theorem}


 \begin{proof}
 We will denote the conditional probability and the conditional expectation given $ Y_0 = y $ as $ \P_y $ and $ \E_y $  respectively.
The proof is divided into three steps.\\

 \noindent
 \textbf{Step 1:} Assume that there exist constants $C_3,\theta_0>0$ such that for all $0<\theta<\theta_0$
 \begin{equation}
 \label{laplace:step1}
 \E_y \big( \exp (- C_3\theta^2 \nu^Y) \big) \geq \exp( -\theta y).
 \end{equation}
 Using that $x\mapsto 1 - \exp (-C_3 \theta^2 x)$ is a non-decreasing function for any $\theta>0$,
 the Markov inequality and \eqref{laplace:step1}, we get:
 \begin{equation*}
 \P_y \big( \nu^Y > n \big) \leq \frac{ \E_y  \big( 1 - \exp(-C_3\theta^2 \nu^Y) \big)}
 {1 - \exp(-C_3\theta^2 n)} \leq \frac{1 - \exp(-\theta y)}{1 - \exp(-C_3\theta^2 n)}
 \end{equation*}
 provided that $\theta<\theta_0$. Hence, for $\theta=1/\sqrt{n}$ with $n>1/\theta_0^{2}$,
 \begin{equation*}
 \P_y \big( \nu^Y > n \big) \leq \frac{ 1 - \exp(-y/\sqrt{n})}{1 - \exp(-C_3)} \leq \frac{y}{\sqrt{n}(1 - \exp(-C_3))},
 \end{equation*}
 which is the announced result with $C_2=(1-\exp(-C_3))^{-1}$.\\

 \noindent
 \textbf{Step 2:} It remains to prove the estimate \eqref{laplace:step1} on the Laplace transform of $\nu^Y$.
 To do it, we use martingale techniques. For $\theta>0$ and $j\geq 0$, let us set
 \begin{equation*}
 \psi_{\theta,j} := \E \big( \exp(-\theta(Y_{j+1}-Y_j)) \ | \ \mathcal{G}_j \big).
 \end{equation*}
 Thus we define a discrete time process as follows: $Z_0:=\exp(-\theta Y_0)=\exp(-\theta y)$
 $\P$-a.s. and for $t\geq 1$,
 \begin{equation}
 \label{laplace:def_cond}
 Z_t := \frac{\exp(-\theta Y_t)}{\prod_{j=0}^{t-1} \psi_{\theta,j}}.
 \end{equation}
 This process is a $\{\mathcal{G}_t : t\geq 0\}$-martingale since
 \begin{align*}
 \E \big( Z_{t+1} \ |\ \mathcal{G}_t \big) & =  \E \Bigl[ \frac{\exp(-\theta (Y_{t+1}-Y_t))
 \exp(-\theta Y_t)}{\prod_{j=0}^{t} \psi_{\theta,j}} \ |\ \mathcal{G}_t \Bigr] \\
 & =  \frac{Z_t}{\psi_{\theta,t}} \, \E \big[ \exp(-\theta (Y_{t+1}-Y_t)) \ |\ \mathcal{G}_t \big]  =  Z_t.
 \end{align*}
 Then, $(Z_{t\wedge \nu^Y})_{t\geq 0}$ is also a non-negative $\{\mathcal{G}_t : t\geq 0\}$-martingale
 and for any $t\geq 0$,
 \begin{equation}
 \label{laplace:step3}
 \E_y \big( Z_{t\wedge \nu^Y} \big) = \E_y \big( Z_{0} \big) = \exp(-\theta y).
 \end{equation}

 For the moment, let us assume that there exist constants $C_3,\theta_0>0$ such that for
 all $\theta\in (0,\theta_0)$ and for all index $t$,
 \begin{equation}
 \label{laplace:step2}
 \exp \big(-\theta Y_{t\wedge \nu^Y} - (t\wedge \nu^Y) C_3\theta^2 \big) \geq Z_{t\wedge \nu^Y} .
 \end{equation}

 Assuming this, we first show that $ \P ( \nu^Y < \infty ) = 1 $.
 For any $ \omega $ such that $ \nu^Y (\omega) < \infty $, we obtain that, letting by $ t \uparrow \infty $,
 $$ \exp \big(-\theta Y_{t \wedge \nu^Y} - (t \wedge \nu^Y) C_3\theta^2 \big) \to \exp ( - \theta Y_{\nu^Y}
 - \nu^Y C_3 \theta^2 ) =  \exp ( - \nu^Y C_3 \theta^2 ) $$
 since $ Y_{\nu^{Y}  (\omega) } (\omega) = 0 $.
 On the other hand, for $ \omega $ such that $ \nu^Y (\omega)
 = \infty $, we get
 $$ \exp \big(-\theta Y_{t \wedge \nu^Y} - (t \wedge \nu^Y) C_3\theta^2 \big) \leq
  \exp \big(- (t \wedge \nu^Y) C_3\theta^2 \big) \to 0 ,$$
 as $ C_3 $ are positive and $ Y_t $ is non-negative.
 We observe that $\exp \big(-\theta Y_{t\wedge \nu^Y}-(t\wedge \nu^Y) C_3 \theta^2 \big)$
 is smaller than $1$ for all $t$ since $Y_t$ is non-negative. Applying the dominated convergence
 theorem, along with (\ref{laplace:step2}) and (\ref{laplace:step3}), we can write:
 \begin{align}
 \E_y  \big( \mathbf{1}_{ \{ \nu^Y < \infty \}}  \exp(-C_3\theta^2 \nu^Y) \big)
 & =  \lim_{t \rightarrow \infty} \E_y  \Big( \exp \big(-\theta Y_{t \wedge \nu^Y}
  - (t \wedge \nu^Y) C_{3} \theta^2 \big) \Big)
 \nonumber \\
 & \geq  \limsup_{t \rightarrow \infty} \E_y  \big( Z_{t \wedge \nu^Y} \big)
 =  \exp (-\theta y) .
 \label{laplace:step2_final_relation}
 \end{align}

 Next, we let $ \theta \downarrow 0 $ in (\ref{laplace:step2_final_relation}). Again applying  the dominated
 convergence theorem, we
 obtain that $ \E_y  \big( \mathbf{1}_{ \{ \nu^Y < \infty\}}  \exp(-C_3\theta^2 \nu^Y) \big)
 \to \P_y \big( \nu^Y < \infty \big) $ while the right hand side of the above inequality converges to $1$.
 Thus, we have $ \P_y \big( \nu^Y < \infty ) \geq 1 $. Now, using the fact that $ \{ \nu^Y < \infty \} $ almost
 surely in  (\ref{laplace:step2_final_relation}), we obtain the desired relation (\ref{laplace:step1}).\\


 \noindent
 \textbf{Step 3:} If there exist constants $C_3,\theta_0>0$ such that, for any
 $\theta\in (0,\theta_0)$ and any $j\in \{0,1,\dots, t\wedge\nu^Y -1\}$,
 \begin{equation}
 \label{laplace:step4}
 \log \big( \psi_{\theta,j} \big) \geq C_{3} \theta^2
 \end{equation}
 then $\sum_{j=0}^{t\wedge\nu^Y -1} \log \big(\psi_{\theta,j} \big)
 \geq (t\wedge \nu^Y)C_{3}\theta^2$ from which (\ref{laplace:step2}) easily follows.
 We observe that, for any $ x \in \R $,  by Taylor's expansion,  $ e^x = 1 + x + x^2/2 + x^3/6 + e^{x_0} x_{0}^{4}/4! $
 where $ x_0 $ is some point in between $ 0 $ and $ x $. Thus, for all $ x \in \R $, $ e^x \geq
  1 + x + x^2/2 + x^3/6 $. Now, fix any index  $j\in \{0,1,\dots, t\wedge\nu^Y -1\}$ so that $Y_j>0$.
 Using hypotheses $(i)$ and $(ii)$,  we have, for any $\theta \in [0, \infty)$
 \begin{align*}
 \psi_{\theta,j} & =  \E \big(e^{-\theta(Y_{j+1}-Y_j)}\ |\ \mathcal{G}_j\big) \\
 & \geq  1 -\theta\  \E\big(Y_{j+1}-Y_j \ |\ \mathcal{G}_j\big) +
 \frac{\theta^2}{2} \E\big((Y_{j+1}-Y_j)^2 \ |\ \mathcal{G}_j\big)-\frac{\theta^3}{6}
 \E\big((Y_{j+1}-Y_j)^3\ |\ \mathcal{G}_j\big) \\
 & \geq  1 + C_0 \frac{\theta^2}{2}-C_1 \frac{\theta^3}{6}.
 \end{align*}
 The constants $C_0,C_1$ do not depend on $j$. The function $\theta\in [0,\infty)
 \mapsto 1+C_0\theta^2/2 -C_1\theta^3/6$ is continuous, equal to 1 at
 $\theta=0$ and increasing on the neighbourhood of 1. Hence,
 it is possible to pick $\theta_0>0$ such that for all $0<\theta<\theta_0$,
 $1<1+C_0\theta^2/2 -C_1\theta^3/6<2$. Since $\log(x)\geq (x-1)/2$ for
 $x\in (1,2)$, we obtain for any $0<\theta<\theta_0$,
 \begin{equation*}
 \frac{1}{\theta^2} \log \big( \psi_{\theta,j} \big) \geq \frac{1}{\theta^2}
 \log \big( 1 + C_0 \frac{\theta^2}{2} - C_1 \frac{\theta^3}{6} \big)
 \geq \frac{C_0}{4} - \frac{C_1 \theta}{12}.
 \end{equation*}
 We then deduce (\ref{laplace:step4}) for $\theta_0>0$ small enough and $C_{3}=C_0/8$.
 \end{proof}

 \begin{remark}
 If we use the bound $ e^x \geq 1 + x + (x^+)^2/2 $ for $ x \in \R $
 where $ x^+ = \max( x, 0) $, the requirements in (ii) could be reduced to
 $ \E \bigl[ \bigl( (Y_{t+1} - Y_t)^{+} \bigr)^2 \mid {\cal  G}_t \bigr] \geq C_0 $
 on the set $ \{ Y_t > 0 \} $. \qed
 \end{remark}

 Theorem \ref{thm:LaplaceCoalescingTimeTailNew} assumes the supermartingale structure
 which is often not available for the whole set of values that the process may take. Next we
 prove a result which only assumes the supermartingale structure when the process is away
 from the origin.

 \begin{theorem}
 \label{thm:LaplaceCoalescingTimeTailNew2}
 Let $\{Y_t : t \geq 0\}$ be a $\{{\cal G}_t : t \geq 0\}$ adapted
 stochastic process taking values in $\R_+$. 
 Suppose that there exist positive constants $ M, C_0, C_1, C_2 $ and
 $ 0 < p_0 < 1 $ such that
 \begin{itemize}
 \item[(i)]  for any $t\geq 0$,
 \begin{equation*}
 \E \big[Y_{t+1}-Y_t \mid {\cal G}_t\big]
 \leq C_0\mathbf{1}_{\{Y_t \in (0, M] \}};
 \end{equation*}

 \item[(ii)]  for any $t\geq 0$,
 \begin{equation*}
  \P \big( Y_{t+1} = 0 \mid {\cal G}_t\big) \geq c_0
  \text{ on the event }  \{Y_t \in (0, M]\};
 \end{equation*}

 \item[(iii)]  for any $t \geq 0$,  on the event $\{Y_t> M\}$, a.s.  we have
 \begin{equation*}
 \E \bigl[ (Y_{t+1} - Y_t)^2 \mid {\cal  G}_t \bigr] \geq C_1 \; \text{ and } \;
 \E \bigl[ |Y_{t+1} - Y_t|^3 \mid {\cal  G}_t \bigr] \leq C_2 .
 \end{equation*}

 \end{itemize}
 Then, $ \nu^Y  < \infty $ almost surely. Further,  there exist positive constants $C_3, C_4 $
 such that for any $y>0$ and any integer $n$,
 \begin{equation*}
 \P( \nu^Y > n \mid Y_0 = y) \leq \frac{C_3 + C_4 y}{\sqrt{n}} .
 \end{equation*}
 \end{theorem}

 \begin{proof}
 The proof follows similar steps as in the proof of Theorem \ref{thm:LaplaceCoalescingTimeTailNew}. Again we
 use the same notations as in the previous Theorem and the proof is divided into steps.\\

 We will show in Step 1 that  there exist constants $C_4, C_5 > 0, 0 < \theta_0 < -\log(1- p_0)/(2C_5) $ such that for all $0<\theta<\theta_0 $
 \begin{equation}
 \label{laplace2:step1}
 \E_y \big( \exp (- C_4 \theta^2 \nu^Y) \big) \geq e^{- 2 \theta y}  e^{- 2 C_5 \theta} \frac{ 1 -
 (1-p_0) e^{ 2 C_5 \theta}}{p_0}.
 \end{equation}
 Choosing $ \theta=1/\sqrt{n}$ as earlier and making few algebraic simplifications, we have the desired
 result.\\

\noindent \textbf{Step 1:} To obtain (\ref{laplace2:step1}), we consider the same exponential martingale again, i.e.,
 for $\theta>0$, we set  $Z_0:=\exp(-\theta Y_0)=\exp(-\theta y)$
 $\P$-a.s. and for $t\geq 1$,
 \begin{equation*}
 Z_t := \frac{\exp(-\theta Y_t)}{\prod_{j=0}^{t-1} \psi_{\theta,j}}
 \end{equation*}
 where $ \psi_{\theta,j} $ is as defined in (\ref{laplace:def_cond}).

 Let us define $ T_t $ as the number of visits of the process to the set $ (0, M]$
 upto time $ t $, i.e.,
 \begin{equation}
 \label{laplace2:defTt}
 T_t = \sum_{j = 0}^t \mathbf{1}_{\{ Y_t \in (0, M] \}}.
  \end{equation}

 Assume that there exist constants $C_6,C_7$ and $\theta_0>0$ such that for
 all $\theta\in (0,\theta_0)$ and for all index $t \geq 0$,
 \begin{equation}
 \label{laplace2:step2}
 \exp \big(-\theta Y_{t\wedge \nu^Y} - (t\wedge \nu^Y) C_6\theta^2 + \theta C_7 T_{t\wedge \nu^Y - 1}
 \big) \geq Z_{t\wedge \nu^Y} .
 \end{equation}This will be proved in Step 2.\\

 First we argue that $ \nu^Y < \infty $ almost surely. Consider the case $ y > M$.
 Let us take $ Y_t^{(M)} := Y_t \mathbf{1}_{\{ Y_t > M\}} $. We observe that the conditions of Theorem
 \ref{thm:LaplaceCoalescingTimeTailNew} are satisfied by the process $ \{ Y_t^{(M)} : t \geq 0 \} $ and
 hence by the previous Theorem the process $ Y_t^{(M)} $ will hit $0 $ in finite time, i.e., the process
  $  Y_t $ will enter the set $ [0, M] $ in finite time. If, at this point, the process $\{  Y_t \} $ is already at $0$,
 we are done. Otherwise, by assumption (ii), it has  a strictly positive probability $ p_0 $ of hitting $0$
 at the next step. So, in finitely many steps, the process will either
 hit $ 0 $, which again shows that $ \nu^Y < \infty $ almost surely, or will go out to the set $ (M, \infty) $.
 In the latter case, we are back to the starting situation. So, again the process will hit the set $  [0, M] $
 in finite time and so on. Since $ p_0 > 0 $, the process can only go out of $ (0,M] $ to $ (M, \infty ) $
 finitely many times, before it hits $ 0 $. Thus, the process will hit the set $ 0 $ in finite time
 almost surely. When the process starts in  $ (0, M] $, the situation is as above without having first to hit
 the set $ [0, M] $. So, in all cases, $ \nu^Y < \infty $ almost surely.

 Now we observe that $ T_{t\wedge \nu^Y - 1} $
 is a non-decreasing in $ t $ and hence converges to $ T_{\nu^Y-1} $ as $t \uparrow \infty$.
 Since  at each time point, the process is in the set $ (0, M] $, there is at least
 a probability of $ p_0 $ of hitting $0$, the number of visits in $ (0, M] $ before hitting $0$
 is stochastically dominated by a geometric random variable $ G $ (the total number of trials before a success)
 with probability of success $ p_0$. Thus, $ T_{\nu^Y-1} $
  is finite almost surely and is stochastically dominated by the above described geometric random
 variable $G$.

 Now, we impose further restriction on $ \theta_0. $ Assume that $ \theta_0 < - \log (1-p_0)/ (2C_7) $.
 Hence, we note that, for $ 0 < \theta < \theta_0 $, we have $ \E_y \bigl( \exp( 2 \theta C_7 G) \bigr) < +\infty $.
 Letting $ t \uparrow \infty $ in (\ref{laplace2:step2}), we have that the left hand side converges to
 to $ \exp \bigl(- C_6\theta^2 \nu^Y +   \theta C_7 T_{\nu^Y - 1} \bigr). $ Furthermore, for any $ t \geq 0 $,
 the left hand side is bounded by $ \exp \big( \theta C_7 T_{t\wedge \nu^Y - 1}\big) \leq
  \exp \bigl( \theta C_7 T_{\nu^Y - 1}   \bigr) $. Since $ T_{\nu^Y - 1} $ is stochastically dominated by
 $ G $ and $ \E_y (\exp(\theta C_7 G)) < \infty $,  the dominated convergence theorem may be applied to
 conclude
 \begin{align*}
  \exp (-\theta y) & =  \lim_{t \rightarrow \infty} \E_y  \big( Z_{t \wedge \nu^Y} \big)  \\
 & \leq \lim_{t \rightarrow \infty} \E_y  \bigl[
 \exp \bigl( -\theta Y_{t\wedge \nu^Y} - (t\wedge \nu^Y) C_6\theta^2 + \theta C_7 T_{t\wedge \nu^Y - 1}
 \bigr) \bigr] \\
 & =  \E_y  \bigl[   \exp(-C_6\theta^2 \nu^Y +\theta C_7 T_{\nu^Y - 1}   ) \bigr] \\
 & \leq \Bigl( \E_y  \bigl[   \exp(- 2 C_6\theta^2 \nu^Y ) \bigr] \Bigr)^{1/2}
 \Bigl( \E_y  \bigl[   \exp( 2 \theta C_7 T_{\nu^Y - 1}   ) \bigr]  \Bigr)^{1/2} \\
 & \leq \Bigl( \E_y  \bigl[   \exp(- 2 C_6\theta^2 \nu^Y ) \bigr] \Bigr)^{1/2}
 \Bigl( \E_y  \bigl[   \exp( 2 \theta C_7 G   ) \bigr]  \Bigr)^{1/2}
 \end{align*}
 where the inequality in fourth line is obtained by applying Cauchy-Schwartz inequality and where the final
 inequality is obtained by stochastic domination. By choice of $ \theta_0 $, the moment
 generating function of $G$ is finite for $ 0 < \theta < \theta_0 $. Using the expression of $G$ and squaring the right hand side, we obtain the inequality in (\ref{laplace2:step1}).\\

\noindent \textbf{Step 2:} To obtain (\ref{laplace2:step2}), we follow  similar steps. Fix any index
 $j\in \{0,1,\dots, t\wedge\nu^Y -1\}$ so that $Y_j>0$. If $ Y_j > M$, we have, as earlier, for suitable
 choice of $ \theta_1 > 0 $,
 \begin{equation*}
 \log \big( \psi_{\theta,j} \big) \geq  C_8 \theta^2
 \end{equation*}
 for $0<\theta<\theta_1$. For the case $ Y_j \in (0, M] $, we note that $  e^x = 1 + x + e^{x_0} x^2/2  $
 for some $ x_0 $ in between $ 0 $ and $ x $ so that $ e^x \geq 1 + x $. Thus, we have
 \begin{equation*}
 \psi_{\theta,j}  =  \E \big(e^{-\theta(Y_{j+1}-Y_j)}\ |\ \mathcal{G}_j\big)
  \geq  1 -\theta   \E\big(Y_{j+1}-Y_j \ |\ \mathcal{G}_j\big) \geq 1 - C_0 \theta .
 \end{equation*}
 Noting that the function $ \theta \mapsto 1 - C_0\theta $ is continuous, equal to 1 at
 $\theta=0$ and decreasing on the neighbourhood of 1. Hence,
 it is possible to pick $\theta_2>0$ such that for all $0<\theta<\theta_2$,
 $1 > 1 - C_0\theta > 1/2$. Since $\log(x)\geq 2(x-1)$ for
 $x \in (1/2,1)$, we obtain for any $0<\theta<\theta_2$,
 \begin{equation*}
 \log \big( \psi_{\theta,j} \big) \geq - 2 C_0 \theta
 \end{equation*}
 for $0<\theta<\theta_2$.
 Combining these two inequalities,  for $ 0 < \theta  < \min \{ \theta_1, \theta_2, 1\} = \theta_0$, we have
 \begin{align*}
  \sum_{j=0}^{t \wedge \nu^Y-1} \log \bigl( \psi_{\theta,j} \bigr) & \geq
  C_8 \theta^2 \sum_{j=0}^{t \wedge \nu^Y-1} \mathbb{ I} (\{ Y_j \in (M, \infty) \} )
 -  2 C_0 \theta  \sum_{j=0}^{t \wedge \nu^Y-1} \mathbb{ I} (\{ Y_j \in (0, M] \} ) \\
 & =  C_8 \theta^2  (t \wedge \nu^Y - T_{t \wedge \nu^Y-1} ) - 2 C_0 \theta T_{t \wedge \nu^Y-1} \\
 & \geq  C_8 \theta^2  (t \wedge \nu^Y) - (2 C_0 + C_8)  \theta T_{t \wedge \nu^Y-1}.
 \end{align*}
 This completes the proof of (\ref{laplace2:step2}).
 \end{proof}

 \begin{remark}
 Here also, we can use  the bound $ e^x = 1 + x + (x^+)^2/2 $ for $ x \in \R $.
 In such a case we can replace condition (iii) by
 $ \E \bigl[ \bigl( (Y_{t+1} - Y_t)^{+} \bigr)^2 \mid {\cal  G}_t \bigr] \geq C_0 $
 on the set $ \{ Y_t > M \} $. \qed
 \end{remark}

Finally, we deal with situations where the increments of the process $\{Y_t : t \geq 0\}$ have a null expectation on an event with high probability. This is typically the case when the considered process closely resembles to a random walk when it is far from the origin.

\begin{corol}
\label{corol:LaplaceCoalescingTimeTailNew3}
Let $\{Y_t : t \geq 0\}$ be a $\{{\cal G}_t : t \geq 0\}$ adapted stochastic process taking values in $\R_+$. Let $\nu^Y := \inf\{t \geq 1 : Y_t = 0\}$ be the first hitting time to $0$. Suppose for any $t \geq 0$ there exist positive constants $ M_0, C_0, C_1, C_2, C_3 $ such that:
\begin{itemize}
\item[(i)] There exists an event $F_t$ such that, on the event $\{Y_t> M_0\}$, we have $\P(F^c_t \mid {\cal G}_t) \leq C_0 / Y^3_t$ and
\begin{equation*}
\E \big[ (Y_{t+1}-Y_t) \mathbf{1}_{F_t} \mid {\cal G}_t\big] = 0 ~.
\end{equation*}
\item[(ii)] For any $t\geq 0$, on the event $\{Y_t \leq M_0\}$,
\begin{equation*}
\E \big[ (Y_{t+1}-Y_t)  \mid {\cal G}_t\big] \leq C_1 ~.
\end{equation*}
\item[(iii)] For any $t\geq 0$ and $ m > 0 $, there exists $c_m > 0 $ such that, on the event $\{Y_t \in (0, m]\}$,
\begin{equation*}
\P \big( Y_{t+1} = 0 \mid {\cal G}_t\big) \geq c_m ~.
\end{equation*}
\item[(iv)] For any $t \geq 0$, on the event $\{Y_t> M_0\}$, we have
\begin{equation*}
\E \bigl[ (Y_{t+1} - Y_t)^2 \mid {\cal  G}_t \bigr] \geq C_2 \; \text{ and } \; \E \bigl[ |Y_{t+1} - Y_t|^3 \mid {\cal  G}_t \bigr] \leq C_3 ~.
\end{equation*}
\end{itemize}
Then, $\nu^Y<\infty $ almost surely. Further, there exist positive constants $C_4, C_5$ such that for any $y>0$ and any integer $n$,
\begin{equation*}
\P( \nu^Y > n \mid Y_0 = y) \leq \frac{C_4 + C_5 y}{\sqrt{n}} ~.
\end{equation*}
\end{corol}

{Recall that, by Corollary \ref{corol:ZprocessRwalkProperties}, the four hypotheses $(i)$-$(iv)$ of Corollary \ref{corol:LaplaceCoalescingTimeTailNew3} are satisfied by the process $\{Z_\ell : \ell \geq 0\}$ defined in (\ref{def:ZProcess}) by $Z_{\ell}=g_{\beta_{\ell}}^{\uparrow}(\u_2)(1)-g_{\beta_{\ell}}^{\uparrow}(\u_1)(1)$.}

\begin{proof}
Let us define $ \phi : [0, \infty ) \to [0, \infty ) $ by
\begin{equation*}
\phi (u) = 1 + \frac{ 1 }{ (1+u)^{1/3}} .
\end{equation*}
 Clearly $ \phi $ is positive, in fact,  $ 1 \leq \phi (u) \leq 2 $ for  all $ u \in [0, \infty ) $.
 Furthermore,
 \begin{align*}
  & \phi^{(1)} (u) = - \frac{ 1 }{ 3 (1+u)^{4/3} }, \qquad  \phi^{(2)} (u) =  \frac{ 4 }{ 9 (1+u)^{7/3} }\\
  & \text{ and }  \phi^{(3)} (u) =  - \frac{ 28 }{ 27 (1+u)^{10/3} } < 0.
 \end{align*}
 Now define the function $ f : [0, \infty ) \to [0, \infty ) $ by $ f (0) = 0 $ and for $ x > 0 $,
 $ f(x) = \int_0^x \phi (u) du $. Then, $ f^{(1)} (u) = \phi (u) $ and $  f^{(k)} (u) = \phi^{(k-1) } (u)
 $ for $ k \geq 2 $.   Since $ \phi $ is positive, the function $ f $ is strictly increasing.

 Define the process $ Z_t = f (Y_t) $ for all $ t \geq 0 $. Since $ f $ is strictly increasing,
 we observe that $ Z_t = 0 $ if and only if $ Y_t = 0 $. Therefore, we have that $ \nu^Y = \nu^Z $.
 We choose $ M \geq  f(M_0) $ so that the function $ C ( 1 + u)^{7/3} - 3 C_2 u^2 ( 1 + u) + 4C_3 u^2  < 0 $
 for all $ u > M $ where $ C = C_3 /  C_0^{2/3} $.
 We show that for $ M $, the process $ \{ Z_t : t \geq 0 \} $ satisfy the conditions of Theorem
  \ref{thm:LaplaceCoalescingTimeTailNew2}. Therefore, using Theorem \ref{thm:LaplaceCoalescingTimeTailNew2},
 for suitable constants $ C_4, C_5 > 0$, we conclude that
 \begin{equation*}
  \P ( \nu^Y > n | Y_0 = y) = \P ( \nu^Z > n | Z_0 = f(y) ) \leq \frac{ C_4 + C_5 f(y) }{ \sqrt{n}} \leq
  \frac{ C_4 + 2 C_5 y }{ \sqrt{n}}
 \end{equation*}
 using the fact that $ f^{(1)}(u) = \phi (u) \leq 2 $ for $ u > 0 $.

 To verify the conditions of Theorem \ref{thm:LaplaceCoalescingTimeTailNew2} for the process $\{ Z_t : t \geq 0 \}$,
 we have that $ (Z_{t+1} - Z_{t})^2 = (f(Y_{t+1}) - f(Y_t))^2 = \bigl( f^{(1)} (W)  ( Y_{t+1} - Y_t) \bigr)^2
 = \bigl( \phi (W) \bigr)^2 ( Y_{t+1} - Y_t)^2 \geq  ( Y_{t+1} - Y_t)^2 $ where $ W $ is some point between  $ Y_t $ and $ Y_{t+1} $.
 Similarly $  |Z_{t+1} - Z_{t}|^3 \leq 8  | Y_{t+1} - Y_{t}|^3 $.
 Thus, condition (iii) of Theorem \ref{thm:LaplaceCoalescingTimeTailNew2}
 follows easily from (iv) and the choice of $ M$.
  Also, condition (ii) is satisfied with $ p_0 = \alpha_M $ and it is easy to check that
\begin{align*}
& \E (  Z_{t+1} - Z_t  \mid {\cal G}_t)\\
&  =    \E (   f(Y_{t+1}) - f(Y_t)   \mid {\cal G}_t)  \\
&  \leq f^{(1)} (Y_t)  \E (   Y_{t+1} - Y_t \mid {\cal G}_t)\\
&  \leq 2 C_1 \mathbf{1}_{ \{ Y_t \in (0, M_0]\}} + 2 C_3^{1/3} \mathbf{1}_{\{ Y_t \in (M_0, \infty)\}} ,
\end{align*}
 where we have used the  fact that
 $ f^{(2)}(u) = \phi^{(1)} (u) < 0 $ for all $ u > 0 $ in the Taylor's expansion.  Finally, we have to
 show that when $ Z_t > M $, $ \E (  Z_{t+1} - Z_t  \mid {\cal G}_t) \leq 0 $. Using Taylor's expansion again and
 the fact that $ f^{(4)} (u) < 0 $, we have
 \begin{align*}
  & \E (  Z_{t+1} - Z_t  \mid {\cal G}_t)  = \E (   f(Y_{t+1}) - f(Y_t)   \mid {\cal G}_t) \\
 & \leq f^{(1)} (Y_t)  \E (   Y_{t+1} - Y_t \mid {\cal G}_t) + \frac{f^{(2)} (Y_t) }{2}
 \E \bigl(  (  Y_{t+1} - Y_t)^2  \mid {\cal G}_t \bigr)\\
 & \qquad \qquad   + \frac{f^{(3)} (Y_t) }{6} \E \bigl(  ( Y_{t+1} - Y_t)^3
 \mid {\cal G}_t \bigr) \\
 & \leq 2  |\E (   Y_{t+1} - Y_t \mid {\cal G}_t)| + \frac{f^{(2)} (Y_t) }{2}
 \E \bigl(  (  Y_{t+1} - Y_t)^2  \mid {\cal G}_t \bigr)\\
 & \qquad \qquad   + \frac{f^{(3)} (Y_t) }{6} \E \bigl(  ( Y_{t+1} - Y_t)^3
 \mid {\cal G}_t \bigr) .
 \end{align*}
 We observe the second term is bounded by $ - C_2 / \bigl[ 3 ( 1 +Y_t)^{4/3} \bigr] $ while the last term
 is bounded by $   4 C_3 / \bigl[ 9 ( 1 +Y_t)^{7/3} \bigr] $. The first term is broken into two parts
 and the choice $ M $ ensures that $ Y_t > M_0 $ and we have,
 \begin{align*}
  & | \E (   Y_{t+1} - Y_t \mid {\cal G}_t) | \\
 &=  | \E \bigl(   (Y_{t+1} - Y_t ) \mathbf{1}_{ F_t }\mid {\cal G}_t \bigr) +
\E \bigl(   (Y_{t+1} - Y_t ) \mathbf{1}_{ F^c_t }\mid {\cal G}_t \bigr) | \\
 &=  \E \bigl(   (Y_{t+1} - Y_t ) \mathbf{1}_{ F^c_t }\mid {\cal G}_t \bigr) | \\
 & \leq   \E \bigl(   |Y_{t+1} - Y_t | \mathbf{1}_{{ F^c_t }} \mid {\cal G}_t \bigr)  \\
 & \leq   \bigl[ \E \bigl(   |Y_{t+1} - Y_t |^3 \mid {\cal G}_t \bigr)  \bigr]^{1/3}
 \bigl[ \P ( F^c_t \mid {\cal G}_t \bigr)  \bigr]^{2/3}  \\
 & \leq  C_3^{1/3} \bigl[   C_0 / Y_t^3  \bigr]^{2/3}
 = \frac{ C^\prime_3 }{  Y_t^2 }.
 \end{align*}

  Putting back, we have that
 \begin{align*}
  & \E (  Z_{t+1} - Z_t  \mid {\cal G}_t) \\
  & \leq 2 \frac{ C^\prime_3 }{  Y_t^2 }
 - \frac{ C_2 }{ 3 ( 1 +Y_t)^{4/3} } + \frac{  4 C_3 }{ 9 ( 1 +Y_t)^{7/3} } \\
 & = \frac1{ 9 Y_t^2 ( 1 + Y_t)^{7/3} } \bigl[ C ( 1 + Y_t)^{7/3} - 3 C_2 Y_t^2 ( 1 + Y_t) + 4C_3 Y_t^2  \bigr] < 0
 \end{align*}
 whenever $ Y_t > M_0 $. This completes the proof.
 \end{proof}

\subsection{Tail distribution of coalescence time for DSF paths}
\label{sect:CoalTimeMain}

Let us denote by $\nu=\nu(\u_1,\u_2)$ the number of (renewal) steps required by the process $\{Z_\ell : \ell \geq 1\}$ to hit $0$:
\begin{equation}
\label{Defi:nu(u1,u2)}
\nu := \inf\{\ell \geq 1 : Z_\ell = 0\} ~.
\end{equation}
Clearly the ordinate $T_{\nu} := g_{\beta_\nu}(\u_1)(2) - \u_1(2)$ gives an upper bound for the coalescence time $T(\u_1,\u_2)$ of the two paths $\pi^{\u_1}$ and $\pi^{\u_2}$. To establish Theorem \ref{thm:CoalescingTimetail}, we first focus on $\nu$. Combining Corollaries \ref{corol:ZprocessRwalkProperties} and \ref{corol:LaplaceCoalescingTimeTailNew3}, we immediately get:

\begin{proposition}
\label{prop:CoalRenewalStepEst}
There exist a positive constant $C_0$ such that for any integer $n$,
\begin{equation}
\label{eq:CoalStepEst}
\P(\nu > n ) \leq {\frac{C_{0}}{\sqrt{n}} \max\{ 1 , \u_2(1)-\u_1(1) \} ~.}
\end{equation}
\end{proposition}

The above proposition allows us to prove Theorem \ref{thm:CoalescingTimetail}.

\begin{proof}[Proof of Theorem \ref{thm:CoalescingTimetail}]
It is easy to observe that
$$
g_{\beta_{\nu}}(\u_1) = g_{\beta_{\nu}}(\u_2)\text{ implies that } g_{m}(\u_1) = g_{m}(\u_2)
$$
for some $m$ such that $m\leq  \beta_{\nu}$. In other words
$$
T_\nu := g_{\beta_{\nu}}(\u_1)^\uparrow (2) - \u_1(2)  = g_{\beta_{\nu}}(\u_1)^\uparrow (2) - \u_2(2)
$$
dominates the actual coalescing time $T(\u_1, \u_2)$ of the two paths. For any $\ell \geq 0$,
clearly the time taken between $\ell$-th and $\ell + 1$-th renewals
are dominated by the width random variable $W_{\ell + 1}$ as defined
in (\ref{def:WidthJtRegenration}). Consider an i.i.d. sequence $\{{\cal W}_i : i \geq 1\}$,
each having the same distribution as ${\cal W}$, where ${\cal W}$ is a
random variable with sub-exponentially decaying tail
such that the conditional distribution of $W_{\ell + 1} \mid {\cal G}_{\ell}$
is dominated by ${\cal W}$ (for details see Proposition \ref{prop:IIDMaxSubExpTail}).
Choose $c = 1/(\E(2{\cal W}))$ and we have,
\begin{align*}
\P(T_{\nu} > t)  & \leq \P \Big( \sum_{\ell = 1}^{\lfloor ct \rfloor + 1} W_{\ell} \geq t \Big) + \P ( \nu > ct ) \\
& \leq \P \Big( \sum_{\ell = 1}^{\lfloor ct \rfloor + 1} ({\cal W}_{\ell} - \E({\cal W})) \geq t(1- c\E({\cal W})) \Big) + \frac{C_{0}}{\sqrt{ct}} \max\{ 1 , \u_2(1)-\u_1(1) \} \\
& \leq \frac{\text{Var} \Big( \sum_{\ell = 1}^{\lfloor ct \rfloor + 1} {\cal W}_{\ell} \Big)} { (t(1- c\E({\cal W}))^2} + \frac{C_{0}}{\sqrt{ct}} \max\{ 1 , \u_2(1)-\u_1(1) \} \\
& \leq \frac{(\lfloor ct \rfloor + 1)\text{Var}({\cal W})}{(t/2 )^2} + \frac{C_{0}}{\sqrt{ct}} \max\{ 1 , \u_2(1)-\u_1(1) \} \\
& \leq \frac{C_{1}}{\sqrt{t}} \max\{ 1 , \u_2(1)-\u_1(1) \} ~,
\end{align*}
for a suitable choice of constant $C_1 > 0$. This completes the proof.
\end{proof}

\begin{remark}
\label{rmk:GeneralLaplace}
Let us end this section with a final remark.
For the DSF, Coupier and Tran showed that the coalescence time between any two DSF paths is almost surely finite \cite{CT13}, which uses Burton-Keane argument. This method gives an independent proof that the coalescence time between DSF paths $\pi^{\u_1}$ and $\pi^{\u_2}$ is almost surely finite. It is also important to observe that this method is very robust.
Similar arguments as above show that the conditions of Theorem
\ref{thm:LaplaceCoalescingTimeTailNew2} hold for other drainage network models which are also in the basin of attraction of the BW \cite{colettidiasfontes,colettivalle,FFW05,RSS15,vallezuaznabar}. Actually both models studied in \cite{colettivalle,vallezuaznabar} have crossing paths and we can easily apply Corollary \ref{corol:LaplaceCoalescingTimeTailNew3} to deal with processes which arise from absolute distance between
paths which may cross over without coalescing. In such situations, if we have that the paths behave
nearly independently when they are far apart, almost all the conditions go through as above. While dealing with crossing paths, we may have to restrict that the individual paths are of small sizes as opposed to the difference
of increments that we consider for non-crossing paths.
 \end{remark}

\section{Convergence to the Brownian web}
\label{sec:cvBW}

This section is devoted to the proof of our main result, namely Theorem \ref{th:FF_BW}, but in fact we prove a stronger version stating that the sequence $\{({\cal X}_n, \widehat{\cal X}_n) : n \geq 1\}$, where ${\cal X}_n$ denotes the scaled DSF and $\widehat{\cal X}_n$ its scaled dual forest, converges in distribution to the BW and its dual $({\cal W}, \widehat{{\cal W}})$. Before stating Theorem \ref{th:FF_BW}, we define the scaled dual forest $\widehat{\cal X}_n$ and the dual Brownian web $\widehat{{\cal W}}$.\\

Let us first specify a dual forest $\widehat{\mathfrak{F}}$ to the DSF $\mathfrak{F}$. We start with the dual vertex set $\widehat{V}$. For any $(x,t)\in\R^2$, let $(x,t)_r\in\NN$ be the unique Poisson point such that
\begin{itemize}
\item[$\bullet$] $(x,t)_r(2)<t$, $h((x,t)_r)(2)\geq t$ and $\pi^{(x,t)_r}(t)>x$ where $\pi^{(x,t)_r}$ denotes the path in ${\cal X}$ starting from $(x,t)_r$;
\item[$\bullet$] there is no path $\pi\in {\cal X}$ with $\sigma_\pi<t$ and $\pi(t)\in (x,\pi^{(x,t)_r}(t))$.
\end{itemize}
Hence, $\pi^{(x,t)_r}$ is the nearest path in ${\cal X}$ to the right of $(x,t)$ starting strictly before time $t$. It is useful to observe that $\pi^{(x,t)_r}$ is defined for any $(x,t)\in\R^2$. Similarly, $\pi^{(x,t)_l}$ denotes the nearest path to the left of $(x,t)$ which starts strictly before time $t$. Now, for each $(x,t)\in {\cal N}$ the nearest left and right dual vertices are respectively defined as
$$
\widehat{r}_{(x,t)} := \bigl(( x + \pi^{(x,t)_r}(t))/2, t \bigr) \; \text{ and } \; \widehat{l}_{(x,t)} := \bigl(( x + \pi^{(x,t)_l}(t))/2, t \bigr) ~.
$$
Then, the dual vertex set $\widehat{V}$ is given by $\widehat{V}:=\{\widehat{r}_{(x,t)}, \widehat{l}_{(x,t)} : (x,t) \in {\cal N}\}$.

Next, let us define the dual ancestor $\widehat{h}(y,s)=\widehat{h}((y,s),\NN)$ of $(y,s)\in\widehat{V}$ as the unique vertex in $\widehat{V}$ given by
\begin{equation*}
\widehat{h}(y,s) :=
\begin{cases}
\widehat{l}_{(y,s)_r} & \text{ if }(y,s)_r(2) >  (y,s)_l(2)  \\
\widehat{r}_{(y,s)_l} & \text{ otherwise.}
\end{cases}
\end{equation*}
The dual edge set is then $\widehat{E}:=\{\langle (y,s), \widehat{h}(y,s) \rangle : (y,s)\in \widehat{V}\}$. Clearly, each dual vertex has exactly one outgoing edge which goes in the downward direction. Hence, the dual graph $\widehat{\mathfrak{F}}:= (\widehat{V},\widehat{E})$ does not contain any cycle. This forest is entirely determined from $\mathfrak{F}$ without extra randomness. We obtain a dual (or backward) path $\widehat{\pi}^{(y,s)}\in\widehat{\Pi}$ starting at $(y,s)$, by linearly joining the successive $\widehat{h}(\cdot)$ steps. Thus, $\widehat{{\cal X}}:=\{\widehat{\pi}^{(y,s)} : (y,s) \in \widehat{V}\}$ denotes the collection of all dual paths obtained from $\widehat{\mathfrak{F}}$.

\begin{figure}[!ht]
\begin{center}
\psfrag{a}{\small{$\x$}}
\psfrag{b}{\small{$\widehat{r}_{\x}$}}
\psfrag{c}{\small{$\widehat{l}_{\x}$}}
\psfrag{d}{\small{$\x_l$}}
\psfrag{e}{\small{$\x_r$}}
\includegraphics[width=7.5cm,height=6cm]{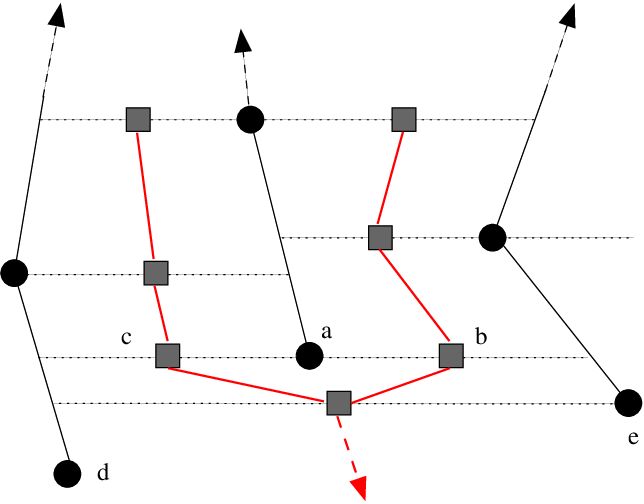}
\caption{{\small \textit{Here is a picture of the DSF $\mathfrak{F}$ (in upward direction) and its dual forest $\widehat{\mathfrak{F}}$ (in downward direction). Vertices of the DSF are black circles whereas dual vertices are grey squares. In particular, the vertex $\x$ produces two dual vertices $\widehat{l}_{\x}$ and $\widehat{r}_{\x}$. On this picture, $(\widehat{r}_{\x})_r=\x_r$ and $(\widehat{r}_{\x})_l=\x_l$ with $\x_r(2)>\x_l(2)$: this implies that $\widehat{h}(\widehat{r}_{\x})=\widehat{l}_{\x_r}$. The same is true for $\widehat{l}_{\x}$.}}}\label{fig:dual}
\end{center}
\end{figure}

Let us recall that ${\cal X}_n={\cal X}_n(\gamma,\sigma)$ for $\gamma,\sigma>0$ and $n\geq 1$, is the collection of $n$-th order diffusively scaled paths. See (\ref{defi:ScaledPath}). In the same way, we define $\widehat{{\cal X}}_n=\widehat{{\cal X}}_n(\gamma,\sigma)$ as the collection of diffusively scaled dual paths. For any dual path $\widehat{\pi}$ with starting time $
\sigma_{\widehat{\pi}}$, the scaled dual path $\widehat{\pi}_n(\gamma,\sigma) : [-\infty , \sigma_{\widehat{\pi}}/n^2\gamma] \to [-\infty, \infty]$ is given by
\begin{equation}
\label{defi:ScaledDualPath}
\widehat{\pi}_n(\gamma,\sigma) (t) := \widehat{\pi}(n^2\gamma t)/n \sigma ~.
\end{equation}
For each $n\geq 1$, the closure $\overline{\widehat{\cal X}}_n$ of $\widehat{{\cal X}}_n$ in $(\widehat{\Pi},d_{\widehat{\Pi}})$ is a $(\widehat{{\cal H}}, {\cal B}_{\widehat{{\cal H}}})$-valued random variable.\\

Now, let us introduce the dual Brownian web $\widehat{{\cal W}}$. For this, we need a topology on the family of backward paths similar to the one associated to $(\Pi,d_\Pi)$. As in the definition of $\Pi$, let $\widehat{\Pi}$ be the collection of all continuous paths $ \widehat{\pi}$ with starting time $\sigma_{\hat{\pi}} \in [-\infty,\infty]$ such that $\widehat{\pi} : [-\infty, \sigma_{\hat{\pi}}] \to [-\infty,\infty] \cup \{\ast\}$ with $\widehat{\pi}(-\infty)= \ast$ and, when $\sigma_{\hat{\pi}} = \infty$, $\widehat{\pi}(\infty)= \ast$. As earlier $t \mapsto (\widehat{\pi}(t),t)$ is continuous from $[-\infty, \sigma_{\hat{\pi}}]$ to $(\R^{2}_c,\rho)$. We thus equip $\widehat{\Pi}$ with the metric
\begin{multline*}
d_{\widehat{\Pi}} (\widehat{\pi}_1,\widehat{\pi}_2) =  |\tanh(\sigma_{\hat{\pi}_1})-\tanh(\sigma_{\hat{\pi}_2})| \\
\vee
\sup_{t\leq \sigma_{\hat{\pi}_1} \vee \sigma_{\hat{\pi}_2}} \Bigl| \frac{\tanh(\widehat{\pi}_1(t\wedge\sigma_{\hat{\pi}_1}))}{1+|t|} - \frac{\tanh(\widehat{\pi}_2(t\wedge\sigma_{\hat{\pi}_2}))}{1+|t|}\Bigr|
\end{multline*}
making $(\widehat{\Pi}, d_{\widehat{\Pi}})$ a complete, separable metric space. Let us recall that with a slight abuse of notation, the closure of any element $X$ in $(\Pi,d_{\Pi})$ or $(\widehat{\Pi},d_{\widehat{\Pi}})$ will be still denoted by $X$. \\
The metric space of compact sets of $\widehat{\Pi}$ is denoted by $(\widehat{{\mathcal H}}, d_{\widehat{{\mathcal H}}})$, where $d_{\widehat{{\mathcal H}}}$ is the Hausdorff metric on $\widehat{{\mathcal H}}$, and let $ {\mathcal B}_{\widehat{{\mathcal H}}}$ be the corresponding Borel $\sigma$-field. The BW and its dual denoted by $({\mathcal W},\widehat{{\mathcal W}})$ are a $({\mathcal H}\times \widehat{{\mathcal H}}, {\mathcal B}_{{\mathcal H}}\times {\mathcal B}_{\widehat{{\mathcal H}}})$-valued random variable such that:
\begin{itemize}
\item[$(i)$] $\widehat{{\mathcal W}}$ is distributed as $-{\mathcal W}$, the BW rotated $180^0$ about the origin;
\item[$(ii)$] ${\mathcal W}$ and $\widehat{{\mathcal W}}$ uniquely determine each other: $\widehat{\mathcal{W}}$ consists of a collection of coalescing paths running backward in time and that a.s. do not cross the paths of $\mathcal{W}$, in the sense that for any paths $\pi\in \mathcal{W}$ and $\widehat{\pi}\in \widehat{\mathcal{W}}$ such that $\sigma_\pi<\sigma_{\widehat{\pi}}$, we have for all $s,t$ such that $\sigma_\pi \leq s<t\leq \sigma_{\widehat{\pi}}$,
\begin{align}
\label{def:NonCrossingPath}
(\pi(s) - \widehat{\pi}(s))(\pi(t) - \widehat{\pi}(t))\geq 0 ~.
\end{align}
\end{itemize}
See Schertzer et al. \cite[Theorem 2.4]{SSS17}. The interaction between the paths in ${\mathcal W}$ and $\widehat{\mathcal W}$ is that of Skorohod reflection (see \cite{STW00}).\\

We can finally state our main result:

\begin{theorem}
\label{th:FF_BW_complete}
There exist $\sigma=\sigma(\lambda)>0$ and $\gamma=\gamma(\lambda)>0$ such that the sequence
$$
\big\{ \big( \overline{{\cal X}}_n(\gamma,\sigma) , \overline{\widehat{\cal X}}_n(\gamma,\sigma) \big) : \, n \geq 1 \big\}
$$
converges in distribution to $({\cal W}, \widehat{{\cal W}})$ as $({\cal H}\times \widehat{{\cal H}},{\cal B}_{{\cal H}\times \widehat{{\cal H}}})$-valued random variables as $n\rightarrow\infty$.
\end{theorem}

Because of the intricate dependencies of the DSF model, we are not able to apply the earlier techniques available in the literature, as Theorem \ref{thm:BwebConvergenceNoncrossing1} below, in order to obtain Theorem \ref{th:FF_BW}. This is the reason why we provide in Section \ref{sect:NewCVCriteria} new convergence criteria (Theorem \ref{thm:JtConvBwebGenPath}) regarding joint convergence to $(\WW, \widehat{\WW})$ for non-crossing path models. Let us mention here that ideas sustaining this result are already present in \cite{RSS15b} (Section 2.3). In Section \ref{sebsec:DSFBwebFinalVerification}, we use results obtained in Sections \ref{sect:Renewal} and \ref{sec:Tail} to show that the sequence $\{({\cal X}_n,\widehat{\cal X}_n) : n \geq 1\}$ satisfies the conditions of Theorem \ref{thm:JtConvBwebGenPath}.

\subsection{Convergence criteria for non-crossing path models}
\label{sect:NewCVCriteria}

Let us recall and comment the first convergence criteria to the BW, provided by Fontes et al. \cite{FINR04}, in order to motivate new convergence criteria given in Theorem \ref{thm:JtConvBwebGenPath}. This section focuses on non-crossing path models. The reader may refer to \cite{SSS17} for a very complete overview on the topic.

Let $\Xi\subset\Pi$. For $t>0$ and $t_0,a,b\in\R$ with $a<b$, consider the counting random variable $\eta_\Xi(t_0,t;a,b)$ defined as
\begin{equation}
\label{def:EtaPtSet}
\eta_{\Xi}(t_0,t;a,b) := \Card \big\{ \pi(t_0+t) :\, \pi \in \Xi , \, \sigma_{\pi}\leq t_0 \text{ and } \pi(t_0) \in [a,b] \big\}
\end{equation}
which considers all paths in $\Xi$, born before $t_0$, that intersect $[a,b]$ at time $t_0$ and counts the number of different positions these paths occupy at time $t_0+t$. In Theorem 2.2 of \cite{FINR04}, Fontes et al. provided the following convergence criteria.

\begin{theorem}[Theorem 2.2 of \cite{FINR04}]
\label{thm:BwebConvergenceNoncrossing1}
Let $\{\Xi_n : n \in \N\}$ be a sequence of $({\mathcal H},B_{{\mathcal H}})$ valued random variables with non-crossing paths. Assume that the following conditions hold:
\begin{itemize}
\item[$(I_1)$] Fix a deterministic countable dense set ${\mathcal D}$  of $\R^2$. For each $\x \in {\mathcal D}$, there exists $\pi_n^{\x} \in \Xi_n$ such that for any finite set of points $\x^1, \dotsc, \x^k \in {\mathcal D}$, as $n \to \infty$, we have
$(\pi^{\x^1}_n, \dotsc, \pi^{\x^k}_n)$ converges in distribution to $(W^{\x^1}, \dotsc, W^{\x^k} )$, where $(W^{\x^1}, \dotsc, W^{\x^k} )$ denotes coalescing Brownian motions starting from the points $\x_1, \ldots, \x_k$.
\item[$(B_1)$] For all $t>0$, $\limsup_{n\rightarrow \infty}\sup_{(a,t_0)\in\R^2}\P(\eta_{\Xi_n}(t_0,t;a,a+\epsilon)\geq 2)\rightarrow 0$  as $\epsilon\downarrow 0$.
\item[$(B_2)$] For all $t>0$, $\frac{1}{\epsilon}\limsup_{n\rightarrow\infty}\sup_{(a,t_0)\in \R^2}\P(\eta_{\Xi_n}(t_0,t;a,a+\epsilon)\geq 3)\rightarrow 0$ as $\epsilon\downarrow 0$.
\end{itemize}
Then $\Xi_n$ converges in distribution to the standard Brownian web ${\mathcal W}$ as $n \to \infty$.
\end{theorem}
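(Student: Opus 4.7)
The plan is to prove Theorem \ref{thm:BwebConvergenceNoncrossing1} in the classical two-step scheme for convergence of random compact sets of paths: first establish tightness of $\{\Xi_n\}$ in $(\mathcal{H}, d_{\mathcal{H}})$, then identify every subsequential limit with the Brownian web by matching the characterization of Theorem \ref{theorem:Bwebcharacterisation}. Throughout I will exploit the fact that the $\Xi_n$'s consist of non-crossing paths, which provides a natural ordering between paths at each time slice and lets counting arguments replace the usual quantitative moment estimates.

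For \emph{tightness}, by the Arzelà--Ascoli characterization of compactness in $(\Pi, d_{\Pi})$ it is enough to show that for every compact rectangle $[a,b]\times[t_0,t_0+t]\subset \R^2$ and every $\eta>0$, the modulus of continuity of paths in $\Xi_n$ inside this rectangle is uniformly controlled with probability at least $1-\eta$, uniformly in $n$. The natural route is to bound, for fixed $t_0$ and $t$, the number $\eta_{\Xi_n}(t_0,t;a,b)$ of distinct positions at time $t_0+t$ reached by paths crossing $[a,b]$ at time $t_0$, which by non-crossing decomposes the collection into ordered ``tubes''. Condition $(B_1)$ says precisely that one cannot fit two such tubes into a window of width $\eps$, so one can cover a large compact region by roughly $1/\eps$ such windows and deduce that the union of paths avoids bad oscillations outside a set of small probability. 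Combined with $(I_1)$, which guarantees tightness of individual paths (they look like Brownian motion for every starting point of $\mathcal{D}$), this yields tightness of the whole random compact set.

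For the \emph{identification} of limits, take any weak subsequential limit $\Xi_\infty$. By $(I_1)$ and standard Skorokhod coupling arguments, $\Xi_\infty$ contains, almost surely, a family $\{\pi^\x : \x\in\mathcal{D}\}$ of coalescing Brownian motions starting from a countable dense set $\mathcal{D}\subset\R^2$; since $\Xi_\infty$ is closed, it contains the closure of this family, which by Theorem \ref{theorem:Bwebcharacterisation} is distributed as $\mathcal{W}$. Hence $\Xi_\infty \supset \mathcal{W}$ in distribution. What remains is to prove the reverse inclusion, i.e. that $\Xi_\infty$ has no \emph{extra} paths beyond those generated by $\mathcal{D}$. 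This is where $(B_2)$ enters: any candidate extra path $\pi\in \Xi_\infty$ starting at $(a,t_0)$ can be sandwiched, by non-crossing, between two paths $\pi^{\x_\pm}\in\mathcal{W}$ starting just before $t_0$ at points within $\eps$ of $a$. If $\pi$ did not coalesce instantaneously with either sandwich path, then at a small later time $t_0+t$ the triple would realize $\eta_{\Xi_\infty}(t_0-\delta,t+\delta;a-\eps,a+\eps)\ge 3$, which occurs with probability $o(\eps)$ by $(B_2)$ (passed to the limit, using Portmanteau and openness of the event). A Fubini/summability argument along a countable family of windows then forces $\pi$ to merge into a path of $\mathcal{W}$, so $\Xi_\infty = \mathcal{W}$ almost surely.

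The main obstacle is the passage of $(B_2)$ to the subsequential limit: the counting functional $\eta_\Xi(t_0,t;a,a+\eps)\ge 3$ is lower-semicontinuous on $\mathcal{H}$ but not continuous, so one must carefully exploit non-crossing and a slight inflation of the window (replacing $\eps$ by $\eps+\delta$ and $t$ by $t-\delta$) to ensure that the event is open and that the weak convergence inequality $\liminf_n \P(\cdot)\le \P_{\Xi_\infty}(\cdot)$ goes in the right direction; the factor $1/\eps$ in $(B_2)$ is tight enough to absorb the countably many windows needed to cover every space--time rectangle. Tightness via $(B_1)$ alone is also delicate, since one must convert a bound on the \emph{number} of paths at two prescribed times into a uniform modulus of continuity for \emph{every} path in the random set; the standard workaround is to observe that non-crossing paths starting from a dense grid already provide the ``envelope'' within which any other path must stay, reducing the modulus of continuity question to that for countably many almost-Brownian trajectories handled by $(I_1)$.
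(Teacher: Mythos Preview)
The paper does not give its own proof of this theorem: it is quoted verbatim as Theorem~2.2 of \cite{FINR04} and used only as background to motivate the new criteria of Theorem~\ref{thm:JtConvBwebGenPath}. The only ``proof content'' the paper supplies is the paragraph immediately following the statement, which records that for non-crossing path families $(I_1)$ alone already implies tightness (Proposition~B.2 of \cite{FINR04}, Proposition~6.4 of \cite{SSS17}), that $(B_1)$ is in fact redundant with $(I_1)$ in this setting (Theorem~6.5 of \cite{SSS17}), and that any subsequential limit a.s.\ contains a copy of $\mathcal{W}$ by $(I_1)$ and Theorem~\ref{theorem:Bwebcharacterisation}; the role of $(B_2)$ is then described informally as ruling out extra paths.

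Your sketch is broadly in line with the standard argument and with that commentary, with one discrepancy worth noting: you attribute tightness to a combination of $(B_1)$ and $(I_1)$, arguing via a covering by $1/\eps$ windows and tube-counting, whereas the paper emphasizes (following \cite{FINR04,SSS17}) that in the non-crossing case $(I_1)$ by itself yields tightness and $(B_1)$ plays no independent role. Your identification step --- sandwiching a putative extra path between nearby dense-set paths and using $(B_2)$ to show the resulting $\eta\geq 3$ event has probability $o(\eps)$, hence summably small over a countable family of windows --- is the right mechanism, and your caveat about lower-semicontinuity of $\{\eta\geq 3\}$ and the need to inflate windows slightly is exactly the technical point that makes the passage to the limit work. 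Since the paper provides no proof to compare against beyond these remarks, there is nothing further to contrast.
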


Let us first mention that for a sequence of $({\cal H}, {\cal B}_{{\cal H}})$-valued random variables $\{\Xi_n: n \in \N\}$ with non-crossing paths, Criterion $(I_1)$ implies tightness (see Proposition B.2 in the Appendix of \cite{FINR04} or Proposition 6.4 in \cite{SSS17}) and hence subsequential limit(s) always exists. Moreover, Criterion $(B_1)$ has in fact been shown to be redundant with $(I_1)$ for non-crossing path models (see Theorem 6.5 of \cite{SSS17}). Combining $(I_1)$ with Theorem \ref{theorem:Bwebcharacterisation}, we obtain that any such subsequential limit $\Xi$ a.s. contains a random subset which is distributed as the standard BW ${\mathcal W}$.

There are several approaches to prove the other inclusion $\Xi\subset{\mathcal W}$. Criterion $(B_2)$ is often verified by applying an FKG type correlation inequality together with a bound on the distribution of the coalescence time between two paths. However, FKG is a strong property which may not hold for models with interactions. This strategy seems really hard to carry out in the DSF context. In the literature, new criteria have been suggested to replace $(B_2)$: let us mention for instance Criterion $(E)$ proposed by Newman et al \cite{NRS05}. See also Theorem 6.3 of Schertzer et al. \cite{SSS17}. In the same reference, Schertzer et al. have given in Theorem 6.6 a new criterion replacing $(B_2)$, called the \textit{wedge condition}. Our convergence result (Theorem \ref{thm:JtConvBwebGenPath} below) appears as a generalization of Theorem 6.6 of \cite{SSS17} by considering the joint convergence of $\{(\Xi_n, \widehat{\Xi}_n) : n\geq 1\}$ to the BW and its dual. Here, $\widehat{\Xi}_n$ merely denotes a $(\widehat{{\cal H}}, \widehat{{\cal B}}_{\widehat{{\cal H}}})$-valued random variable made up of paths running backward in time. Theorem \ref{thm:JtConvBwebGenPath} also replaces the wedge condition by the fact that no limiting primal and dual paths can spend positive Lebesgue time together: this is condition $(iv)$. We believe that Theorem \ref{thm:JtConvBwebGenPath} is robust and can be applied for studying convergence to the BW for a large variety of models with non-crossing paths.

\begin{theorem}
\label{thm:JtConvBwebGenPath}
Let $\{(\Xi_n, \widehat{\Xi}_n) : n\geq 1\}$ be a sequence of $({\cal H}\times\widehat{{\cal H}}, {\cal B}_{{\cal H}\times\widehat{{\cal H}}})$-valued random variables with non-crossing paths only, satisfying the following assumptions:
\begin{itemize}
\item[(i)] For each $n \geq 1$, paths in $\Xi_n$ do not cross (backward) paths in $\widehat{\Xi}_n$ almost surely: there does not exist any $\pi\in\Xi_n$, $\widehat{\pi}\in\widehat{\Xi}_n$ and $t_1,t_2\in (\sigma_{\pi},\sigma_{\widehat{\pi}})$ such that $(\widehat{\pi}(t_1) - \pi(t_1))(\widehat{\pi}(t_2) - \pi(t_2))<0$ almost surely.
\item[(ii)] $\{\Xi_n : n \in \N\}$ satisfies $(I_1)$.
\item[(iii)] { $\{(\widehat{\pi}_n(\sigma_{\widehat{\pi}_n}),\sigma_{\widehat{\pi}_n}) : \widehat{\pi}_n \in \widehat{\Xi}_n \}$, the collection  of starting points of all the backward paths in $\widehat{\Xi}_n$, as $n \to \infty$, becomes dense in $\R^2$.}
\item[(iv)] {For any sub sequential limit $({\cal Z},\widehat{{\cal Z}})$ of $\{(\Xi_n,\widehat{\Xi}_n) : n \in \N\}$, paths of ${\cal Z}$ do not spend positive Lebesgue measure time together with paths of $\widehat{{\cal Z}}$, i.e.,  almost surely there do not exist $\pi\in{\cal Z}$ and $\widehat{\pi}\in\widehat{{\cal Z}}$ such that $\int_{\sigma_\pi}^{\sigma_{\widehat{\pi}}} \mathbf{1}_{\pi(t)=\widehat{\pi}(t)} dt > 0$}.
\end{itemize}
Then $(\Xi_n,\widehat{\Xi}_n)$ converges in distribution to $({\cal W}, \widehat{{\cal W}})$ as $n \to \infty$.
\end{theorem}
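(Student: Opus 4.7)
The plan follows the classical ``contain then cap'' strategy for convergence to the Brownian web. First, I would establish joint tightness of $\{(\Xi_n,\widehat{\Xi}_n):n\geq 1\}$. Tightness of the primal family is automatic from (ii) combined with the non-crossing property, via the standard criterion of \cite[Prop.~B.2]{FINR04} (see also \cite[Prop.~6.4]{SSS17}). For the dual family, tightness can be derived from (i) together with (iii): any backward path in $\widehat{\Xi}_n$ can be squeezed between two neighboring forward paths chosen from a fixed dense deterministic set of starting points, so the uniform modulus-of-continuity estimates that yield tightness of the primal family transfer directly to the dual one.

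Given a subsequential limit $({\cal Z},\widehat{{\cal Z}})$, the next step would be to obtain the lower bounds $\WW\subset{\cal Z}$ and $\widehat{\WW}\subset\widehat{{\cal Z}}$, where $\widehat{\WW}$ is the backward dual web determined by $\WW$. The inclusion $\WW\subset{\cal Z}$ follows directly from condition (ii) combined with the characterization in Theorem~\ref{theorem:Bwebcharacterisation}. For $\widehat{\WW}\subset\widehat{{\cal Z}}$, one uses (iii) to approximate any prescribed dual starting point by starting points of paths in $\widehat{\Xi}_n$; by tightness these paths admit convergent sub-subsequences whose limits are backward paths in $\widehat{{\cal Z}}$ that, thanks to (i), cross no forward path of $\WW\subset{\cal Z}$. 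The characterization of $\widehat{\WW}$ as the unique family of coalescing backward paths with dense deterministic starting points that do not cross $\WW$ then forces these limits to lie in $\widehat{\WW}$, and taking closures gives the desired inclusion.

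The delicate step is the upper bound ${\cal Z}\subset\WW$ (equivalently $\widehat{{\cal Z}}\subset\widehat{\WW}$), where condition (iv) plays the role of the wedge condition of \cite[Thm.~6.6]{SSS17}. Arguing by contradiction, suppose there is a ``rogue'' path $\pi\in{\cal Z}$ with $\pi\notin\WW$. Then at some time $t>\sigma_\pi$ this $\pi$ separates from all paths of $\WW$ emerging from a neighborhood of its starting point; say $\pi(t)<\pi^\ast(t)$ for some $\pi^\ast\in\WW$. Using the density assumption (iii) applied to the dual paths of $\widehat{\WW}\subset\widehat{{\cal Z}}$, I would insert a backward path $\widehat{\pi}\in\widehat{\WW}$ starting from a space-time point strictly between $\pi$ and $\pi^\ast$ slightly above time $t$. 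By (i) this $\widehat{\pi}$ cannot cross either $\pi$ or $\pi^\ast$ while running backward, so it is confined to the narrow corridor between them. Approximating $\pi$ from both sides by a sequence of such inserted dual paths, and using the coalescence structure of $\widehat{\WW}$ together with the tree structure of $\WW$, one forces $\pi$ to coincide with some $\widehat{\pi}\in\widehat{{\cal Z}}$ on an interval of positive length, contradicting (iv).

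The main obstacle will be to make this squeezing argument fully rigorous. The technical difficulties are: controlling the (possibly uncountable) set of paths in $\WW$ emerging from each space-time point, handling the possibility that the rogue path $\pi$ briefly coincides with paths of $\WW$ on small subintervals before separating, and upgrading Hausdorff-closeness of a sequence of dual paths to $\pi$ into equality on a set of positive Lebesgue measure. Since condition (iv) is formally weaker than the wedge condition, the argument cannot rely on a single pair of dual paths forming a wedge that traps $\pi$; instead it must exploit the Skorohod-reflection structure between $\WW$ and $\widehat{\WW}$ via a limiting procedure. This is precisely the content of Section~\ref{sect:iv}.
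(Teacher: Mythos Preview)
Your overall ``contain then cap'' outline is reasonable, but you miss the key simplification that makes the paper's proof short, and you misidentify Section~\ref{sect:iv}.

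The paper does not attempt your direct rogue-path squeezing argument. Instead it observes that conditions (i) and (iv) together \emph{imply} the wedge condition of \cite[Thm.~6.6]{SSS17}: since primal and dual paths do not cross by (i), the only way a path $\pi\in{\cal Z}$ could enter a wedge $A(\widehat{\pi}^l,\widehat{\pi}^r)$ of $\widehat{\cal Z}_{\Q^2}$ from outside is through the bottom point, and to do so it would have to coincide with the dual boundary on a time interval of positive Lebesgue measure---which (iv) forbids. Once the wedge condition is in hand, Theorem~6.6 of \cite{SSS17} gives ${\cal Z}\stackrel{d}{=}{\cal W}$ directly. The same one-line reduction (with roles reversed) plus a variant of \cite[Thm.~1.9]{SS08} (Lemma~\ref{cor:WeakerWedgeDoubleBweb}) handles the dual. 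So condition (iv) is not used via a limiting squeezing procedure at all; it is a geometric obstruction that upgrades non-crossing to the wedge condition in one step.

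Your proposed direct argument---inserting dual paths in a narrow corridor and arguing they must eventually coincide with the rogue $\pi$ on a positive interval---is essentially an attempt to reprove the wedge criterion from scratch, and the difficulties you list (uncountable branching, brief coincidences, Hausdorff-to-Lebesgue upgrade) are real. Your step showing $\widehat{\cal W}\subset\widehat{\cal Z}$ is also imprecise: the characterization of $\widehat{\cal W}$ does not say that an arbitrary non-crossing backward path lies in $\widehat{\cal W}$. The paper instead shows that each $\widehat{\pi}^{(x,t)}\in\widehat{\cal Z}$ with rational start is \emph{determined} at rational times by the forward paths in ${\cal Z}_{\Q^2}$ (via non-crossing), and since ${\cal Z}_{\Q^2}\stackrel{d}{=}{\cal W}_{\Q^2}$ and $\widehat{\cal W}_{\Q^2}$ is likewise determined by ${\cal W}_{\Q^2}$, one gets $\widehat{\cal Z}_{\Q^2}\stackrel{d}{=}\widehat{\cal W}_{\Q^2}$ without any inclusion argument.

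Finally, Section~\ref{sect:iv} is not part of the proof of Theorem~\ref{thm:JtConvBwebGenPath}; it is where condition (iv) is \emph{verified} for the DSF model using the coalescence estimate of Theorem~\ref{thm:CoalescingTimetail}.
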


This section ends with the proof of Theorem \ref{thm:JtConvBwebGenPath}.

\begin{proof}
As mentioned in Section 6.2 of \cite{SSS17}, conditions  $(i)$ and $(ii)$ imply that the sequence $\{(\Xi_n, \widehat{\Xi}_n) : n\geq 1\}$ is jointly tight and then subsequential limit(s) always exists. Let $({\cal Z}, \widehat{{\cal Z}})$ be one of them. Our goal is to identify the distribution of this limiting value with $({\cal W}, \widehat{{\cal W}})$.

As the sequence $\{\Xi_n : n\geq 1\}$ satisfies $(I_1)$, for any $(x,t)\in\Q^2$, there a.s. exists a path $\pi^{(x,t)}$ in ${\cal Z}$ starting from the point $(x,t)$ and distributed as a Brownian motion starting from $x$ at time $t$. Because of the non-crossing paths property of the limit ${\cal Z}$-- which inherits this property from $\Xi_n$ (condition $(i)$)--, similar arguments as in the proof of Proposition 3.1 of \cite{FINR04} ensure that $\pi^{(x,t)}$ is a.s. the only path in ${\cal Z}$ starting at $(x,t)$. This means that ${\cal Z}_{\Q^2}$ is distributed as a collection of coalescing Brownian motions:
\begin{equation}
\label{eq:ZSkeleton}
{\cal Z}_{\Q^2} \stackrel{d}{=} {\cal W}_{\Q^2} ~.
\end{equation}
(See also discussions in Section 6.2 of \cite{SSS17}).

In order to assert that the closure ${\cal Z}$ of ${\cal Z}_{\Q^2}$ in $(\Pi,d_{\Pi})$ is a standard Brownian web, we have to prove that ${\cal Z}$ contains no more paths than ${\cal W}$. This is the role of the wedge condition and Theorem 6.6 of \cite{SSS17}. Let us first introduce some notation. For any backward paths $\widehat{\pi}^l$ and $\widehat{\pi}^r$ in $\widehat{\Pi}$ that are ordered with $\widehat{\pi}^l(s)<\widehat{\pi}^r(s)$ at time $s:=\min\{\sigma_{\widehat{\pi}^r}, \sigma_{\widehat{\pi}^l}\}$, we define $T(\widehat{\pi}^l,\widehat{\pi}^r) := \sup\{t<s : \widehat{\pi}^l(t) = \widehat{\pi}^r(t)\}$ (possibly equal to $-\infty$) as the first hitting time of $\widehat{\pi}^l$ and $\widehat{\pi}^r$ (which is actually the coalescing time of these paths). The \textit{wedge with left boundary $\widehat{\pi}^l$ and right boundary $\widehat{\pi}^r$} is the following open set of $\R^2$:
\begin{equation}
\label{def:wedgenoncpt}
A(\widehat{\pi}^l, \widehat{\pi}^r) := \bigl\{ (y,u)\in\R^{2} : \, T(\widehat{\pi}^l,\widehat{\pi}^r)< u < s \, \mbox{ and } \,\widehat{\pi}^l(u) < y < \widehat{\pi}^r(u) \bigr\} ~.
\end{equation}
A path $\pi\in\Pi$, is said to \textit{enter the wedge $A(\widehat{\pi}^l, \widehat{\pi}^r)$ from outside} if there exist $t_1,t_2$ with $\sigma_{\pi}<t_1<t_2$ such that $(\pi(t_1), t_1)\notin\bar{A}$ and $ (\pi(t_2), t_2)\in A$, where $\bar{A}$ denotes the closure of $A$ in $\R^2$. The \textit{bottom point} of $A(\widehat{\pi}^l, \widehat{\pi}^r)$ is $(\widehat{\pi}^l(T(\widehat{\pi}^l,\widehat{\pi}^r)),T(\widehat{\pi}^l,\widehat{\pi}^r)) = (\widehat{\pi}^r(T(\widehat{\pi}^l,\widehat{\pi}^r)),T(\widehat{\pi}^l,\widehat{\pi}^r))$. {The \textit{wedge condition} states that}
\begin{center}
a.s. no path in ${\cal Z}$ enters any wedge of $\widehat{{\cal Z}}_{\Q^2}$ from outside.
\end{center}
This is Criterion $(U)$ of \cite{SSS17}. This condition combined with $(i)$, $(ii)$ and $(iii)$ implies (Theorem 6.6 of \cite{SSS17}) that $\Xi_n$ converges in distribution to ${\cal W}$ as $n$ tends to infinity, i.e., ${\cal Z}$ is distributed as ${\cal W}$. By condition $(i)$, primal and dual paths do not cross with probability $1$. Hence, the only way for a path $\pi$ in ${\cal Z}$ to enter a wedge of $\widehat{{\cal Z}}_{\Q^2}$ from outside is through its bottom point by spending a time of positive Lebesgue measure with the dual path started from the bottom point of the wedge. But this is forbidden by condition $(iv)$. So the wedge condition holds and ${\cal Z}$ is distributed as ${\cal W}$.

Next, we focus on the dual paths in $\widehat{{\cal Z}}$. From condition $(iii)$, it follows that for any $(x,t)\in\Q^2$, a.s. there exists a backward path $\widehat{\pi}^{(x,t)}$ in $\widehat{\cal Z}$ starting from $(x,t)$. Since paths in ${\cal Z}$ and $\widehat{{\cal Z}}$ do not cross, the position of $\widehat{\pi}^{(x,t)}$ at the rational time $s<t$ can be specified as follows:
\begin{align*}
\widehat{\pi}^{(x,t)}(s) = & \sup\{y \in \Q : \pi^{(y,s)} \in {\cal Z}_{\Q^2}, \pi^{(y,s)}(t) < x\} \\
= & \inf\{y \in \Q : \pi^{(y,s)} \in {\cal Z}_{\Q^2}, \pi^{(y,s)}(t) > x\},
\end{align*}
which means that the dual paths in $\widehat{{\cal Z}}_{\Q^2}$ are uniquely determined by the forward paths in ${\cal Z}_{\Q^2}$. Since the dual paths in $\widehat{\WW}$ do not cross the paths in $\WW$,
it follows that the dual paths in
$\widehat{\cal W}_{\Q^2}$ are also a.s. determined by the forward paths in $\WW_{\Q^2}$. We then deduce from (\ref{eq:ZSkeleton}) that
\begin{equation*}
\widehat{\cal Z}_{\Q^2} \stackrel{d}{=} \widehat{\cal W}_{\Q^2} ~.
\end{equation*}

As previously, we can conclude using conditions $(i)$ and $(iv)$ that a.s. paths of $\widehat{{\cal Z}}$ do not enter any wedge in ${\cal Z}_{\Q^2}$, which has the same distribution as ${\mathcal W}_{\Q^2}$, from outside. We then conclude thanks to the next result which is a slight variant of Theorem 1.9 of \cite{SS08} (see also Theorem 3.9 in \cite{SSS17} and the following remark), whose proof is omitted here:

\begin{lemma}
\label{cor:WeakerWedgeDoubleBweb}
{Let $(\WW, \widehat{{\cal Z}})$ be a $({\cal H}\times \widehat{\cal H}, {\cal B}_{{\cal H}\times \widehat{\cal H}})$-valued random variable with $\WW$ denoting the Brownian web such that
a.s. paths of $\widehat{{\cal Z}}$ do not enter any wedge in ${\mathcal W}_{\Q^2}$ from outside and the set of starting points of dual paths in $\widehat{{\cal Z}}$, given by $\{\sigma_{\widehat{\pi}} : \widehat{\pi} \in \widehat{{\cal Z}}\}$, is dense in $\R^2$.} Then, we have
$$
\widehat{{\cal Z}} \stackrel{d}{=} \widehat{\cal W} ~.
$$
\end{lemma}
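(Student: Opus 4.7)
This result is the dual (backward-path) counterpart of the Brownian web wedge characterization (Theorem 1.9 of \cite{SS08}, equivalently Theorem 6.6 of \cite{SSS17} combined with Criterion $(U)$). The plan is to convert the no-wedge-entering hypothesis into a pathwise identification of every backward path in $\widehat{\cal Z}$ with a dual Brownian web path (which is a measurable function of $\WW$), and then upgrade this to the announced distributional equality via a time-reversal reduction to the known primal characterization.

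\textbf{Step 1: pathwise inclusion $\widehat{\cal Z}\subseteq\widehat{\cal W}$ a.s.} Fix an arbitrary $\widehat{\pi}\in\widehat{\cal Z}$ with starting point $(x,s)=(\widehat{\pi}(\sigma_{\widehat{\pi}}),\sigma_{\widehat{\pi}})$ and a rational time $t<s$. First I would show that $\widehat{\pi}$ cannot strictly cross any forward skeleton path $\pi^{(y,t)}\in\WW_{\Q^2}$: if it did, then using density of $\Q$ and the finiteness of coalescence times for the Brownian web I can pick a nearby rational $y'>y$ so that $\pi^{(y,t)}$ and $\pi^{(y',t)}$ coalesce at a time $T>s$, forming a wedge $A(\pi^{(y,t)},\pi^{(y',t)})$ whose lateral boundary $\widehat{\pi}$ crosses on its way down, which is exactly entering the wedge from outside (the starting point $(x,s)$ sitting above the coalescence point, outside $\bar{A}$), contradicting the hypothesis. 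Taking the supremum over rational initial heights on one side and the infimum on the other yields
$$\widehat{\pi}(t)=\sup\bigl\{y\in\Q:\pi^{(y,t)}\in\WW_{\Q^2},\ \pi^{(y,t)}(s)<x\bigr\}=\inf\bigl\{y\in\Q:\pi^{(y,t)}\in\WW_{\Q^2},\ \pi^{(y,t)}(s)>x\bigr\}.$$
The right-hand side is exactly the explicit formula, recalled at the end of the proof of Theorem~\ref{thm:JtConvBwebGenPath}, that determines the position at time $t$ of the a.s.-unique dual Brownian web path $\widehat{\pi}^{(x,s)}\in\widehat{\cal W}$ started at $(x,s)$. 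By continuity of $\widehat{\pi}$ and $\widehat{\pi}^{(x,s)}$ together with density of rational times, the equality extends to all $t\le s$. Hence $\widehat{\pi}=\widehat{\pi}^{(x,s)}\in\widehat{\cal W}$, and passing to closures in $(\widehat{\Pi},d_{\widehat{\Pi}})$ gives $\widehat{\cal Z}\subseteq\widehat{\cal W}$ almost surely.

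\textbf{Step 2: upgrading to equality in distribution.} From Step 1, $\widehat{\cal Z}$ is an a.s.-subset of the $\WW$-measurable random compact set $\widehat{\cal W}$. The reverse inclusion is obtained by applying the established primal wedge characterization after time reversal. Precisely, the space-time involution $(x,t)\mapsto(-x,-t)$ sends $\WW$ to a backward web distributed as $\widehat{\cal W}$ (property $(i)$ of the $(\WW,\widehat{\WW})$ characterization) and sends $\widehat{\cal Z}$ to a random compact set $\widetilde{\cal Z}\subset\Pi$ of forward paths. The no-wedge-entering hypothesis on $(\WW,\widehat{\cal Z})$ becomes exactly the primal wedge condition $(U)$ of \cite{SSS17} applied to the reflected pair. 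Theorem 1.9 of \cite{SS08} (see also the dual form of Theorem~6.6 of \cite{SSS17}) identifies any such reflected pair with the Brownian-web/dual-Brownian-web pair, from which one reads off $\widetilde{\cal Z}\stackrel{d}{=}\WW$ and therefore $\widehat{\cal Z}\stackrel{d}{=}\widehat{\cal W}$.

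\textbf{Main obstacle.} The technical core lies in Step 1: constructing, for a hypothetical crossing of $\widehat{\pi}$ over $\pi^{(y,t)}$, a genuinely enclosing rational-skeleton wedge. One must select $y'>y$ close enough that the wedge straddles the crossing point on the right, yet coalescence of $\pi^{(y,t)},\pi^{(y',t)}$ occurs above the relevant crossing time so the wedge ``closes'' before $\widehat{\pi}$ would have exited from above; this combines density of $\Q$ with the polynomial coalescence-time tails for Brownian web paths, and requires careful treatment of the boundary case where $\widehat{\pi}$ just touches rather than strictly crosses a skeleton path. Step 2 is a citation-level invocation, the only verification being that the reflected configuration falls within the hypotheses of the SS08/SSS17 characterization.
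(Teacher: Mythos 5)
The paper omits the proof of this lemma entirely (it points to Theorem 1.9 of \cite{SS08} and Theorem 3.9 of \cite{SSS17}), so I am comparing your proposal with the intended standard argument. Your Step 1 is that argument in spirit: the no-wedge-entering hypothesis squeezes each $\widehat{\pi}\in\widehat{\cal Z}$ between the skeleton paths of $\WW_{\Q^2}$, identifies it via the sup/inf formula with the dual-web path from its starting point, and yields $\widehat{\cal Z}\subseteq\widehat{\cal W}$ a.s. However, the wedge you build is not the right one. Using $\pi^{(y,t)}$ itself as a lateral boundary leaves the orientation ambiguous: crossing a lateral boundary on the way down is an \emph{exit} (inside then outside, in the backward direction of travel) for one sign of the crossing, and your parenthetical justification that $(x,s)$ is outside $\bar{A}$ because it sits ``above the coalescence point'' is inconsistent with your own requirement $T>s$ (if $T>s$ the coalescence point lies above $(x,s)$ in time, and $(x,s)$ is outside $\bar A$ only if $x$ lies outside $[\pi^{(y,t)}(s),\pi^{(y',t)}(s)]$). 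The clean construction straddles $\widehat{\pi}$ from \emph{below} the crossing: choose a rational time $u_2$ with $\widehat{\pi}(u_2)<\pi^{(y,t)}(u_2)$ and rationals $z_1<\widehat{\pi}(u_2)<z_2<\pi^{(y,t)}(u_2)$; then $\widehat{\pi}$ lies in the open wedge $A(\pi^{(z_1,u_2)},\pi^{(z_2,u_2)})$ at times just above $u_2$, while at the rational time $u_1>u_2$ where $\widehat{\pi}(u_1)>\pi^{(y,t)}(u_1)\geq\pi^{(z_2,u_2)}(u_1)$ (monotonicity of the coalescing flow) it is outside $\bar{A}$; travelling backward it is therefore outside first and inside later, i.e.\ it enters from outside.

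Step 2 contains the genuine gap. A wedge condition is intrinsically one-sided: it can only bound $\widehat{\cal Z}$ from above, never force it to contain anything (a single dual path satisfies the lemma's hypothesis as literally stated), so no time reversal can convert the hypothesis into the reverse inclusion. Theorem 6.6 of \cite{SSS17} and Theorem 1.9 of \cite{SS08} pair the wedge/containment condition with an $(I_1)$-type hypothesis guaranteeing a dense skeleton of coalescing Brownian motions, and your reflected pair is not known to satisfy any such hypothesis from the data of the lemma; invoking those theorems to ``read off'' $\widetilde{\cal Z}\stackrel{d}{=}\WW$ is circular. The missing lower bound is exactly what the proof of Theorem \ref{thm:JtConvBwebGenPath} supplies immediately before the lemma is invoked: using condition $(iii)$ and non-crossing, $\widehat{\cal Z}$ contains, for every rational space-time point, the backward path given by the sup/inf formula, i.e.\ $\widehat{\cal Z}\supseteq\widehat{\cal W}_{\Q^2}$, whence $\widehat{\cal Z}\supseteq\widehat{\cal W}$ by compactness of $\widehat{\cal Z}$ in $(\widehat{\Pi},d_{\widehat{\Pi}})$. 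Combined with your Step 1 this gives $\widehat{\cal Z}=\widehat{\cal W}$ a.s., which is stronger than the stated conclusion. You should either add this skeleton-containment hypothesis explicitly (the lemma is false without it) or replace Step 2 by an appeal to that previously established fact.
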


This completes the proof of Theorem \ref{thm:JtConvBwebGenPath}: the distribution of the subsequential limit $({\cal Z}, \widehat{{\cal Z}})$ is identified as $({\cal W}, \widehat{{\cal W}})$.
\end{proof}

\subsection{Verification of conditions of Theorem \ref{thm:JtConvBwebGenPath}}
\label{sebsec:DSFBwebFinalVerification}

In this section, we show that the sequence of diffusively scaled path families $\{({\cal X}_n,\widehat{{\cal X}}_n) : n \geq 1\}$ obtained from the DSF and its dual forest satisfies the conditions in Theorem \ref{thm:JtConvBwebGenPath}.

Conditions $(i)$ and $(iii)$ of Theorem \ref{thm:JtConvBwebGenPath} hold by construction. Indeed, paths of ${\cal X}$ do not cross (backward) paths of $\widehat{{\cal X}}$ with probability $1$. The same holds for the scaled sets ${\cal X}_n$ and $\widehat{{\cal X}}_n$. Moreover, the collection $\{(\widehat{\pi}_n(\sigma_{\widehat{\pi}_n}),\sigma_{\widehat{\pi}_n}) : \widehat{\pi}_n \in \widehat{\Xi}_n\}$ of all starting points of the scaled backward paths in $\widehat{\Xi}_n$ becomes dense in $\R^2$ as $n \rightarrow \infty$.

The next two sections are respectively devoted to the proofs of conditions $(ii)$ and $(iv)$. This will conclude the proof of Theorem \ref{th:FF_BW}.

\begin{remark}
{In \cite{CT13}, it was proved that a.s. there is no bi-infinite path in the DSF.
It was also asked whether the non-existence of bi-infinite path in the DSF could be proved using some duality argument. The joint convergence of the scaled DSF and its dual to the double Brownian web $(\WW, \widehat{\WW})$ gives a positive answer to this question.
From the construction it is evident that the DSF has a bi-infinite path if and only if the dual graph is not connected. Now if there are dual paths which do not coalesce but converge to coalescing Brownian motions under diffusive scaling, then we have at least one scaled forward path entrapped between these two scaled dual paths. The joint convergence to the double Brownian web $(\WW, \widehat{\WW})$ forces that there must be a limiting forward Brownian path approximating this entrapped forward scaled path. Further this limiting Brownian path must spend positive Lebesgue measure time together with a backward Brownian path leading to a contradiction and proves that  there is no bi-infinite path in the DSF a.s.}

\end{remark}

\subsubsection{Verification of condition $(ii)$}
\label{sect:(I1)}

Let us prove that the diffusively scaled sequence $\{{\cal X}_n : n\geq 1\}$ satisfies condition $(ii)$, i.e. Criterion $(I_1)$ of Theorem \ref{thm:BwebConvergenceNoncrossing1}. The main ingredients on which $(I_1)$ is based have been stated in Section \ref{sect:Renewal}. On one hand, single path of the DSF can be simultaneously broken down into independent pieces through renewals steps (Proposition \ref{prop:SinglePtRWalk}).  On the other hand, for multiple paths, they behave independently as long as they explore disjoint regions and the size of renewal block between any two consecutive renewal steps admits sub-exponentially decaying tails (Proposition \ref{prop:IIDMaxSubExpTail}). Thenceforth, to get $(I_1)$, we follow the strategy of Ferrari et al \cite{FFW05}, which was also used in \cite{RSS15}. The proof here is very similar to that of \cite{RSS15} (see Section 5.1) but in a continuous setting. For this reason we only provide the main steps so that the reader may understand the method without referring to \cite{RSS15}.\\

Let us first focus on a single path, $\pi^{\0}$ starting at the origin $\u_1^{(0)}=\0$. Let $\{\u^{(\ell)}_1 : \ell \geq 0\}$ be the sequence of renewal vertices allowing to break down $\pi^{\0}$ into independent pieces. Let us scale $\pi^{\0}$ into $\pi^{\0}_n$ as in (\ref{defi:ScaledPath}) with
$$
\sigma := \bigl( \text{Var} \bigl( \u^{(2)}_1(1)- \u^{(1)}_1(1) \bigr) \bigr)^{1/2} \; \text{ and } \; \gamma := \E \bigl(\u^{(2)}_1(2) - \u^{(1)}_1(2)\bigr) ~.
$$
The parameters $\sigma$ and $\gamma$ depend on $\lambda$, $k$ and $\kappa$. From now on, the diffusively scaled sequence $\{{\cal X}_n : n\geq 1\}$ is considered w.r.t. these parameters, but for ease of writing, we drop $(\gamma,\sigma)$ from our notation. {Proposition \ref{prop:SinglePtRWalk} together with Corollary \ref{cor:IncrementMoments}  allow us an application of   Donsker's invariance principle to show that $\pi^{\0}_n$ converges in distribution in $(\Pi,d_{\Pi})$ to $B^{\0}$ a standard Brownian motion started at $\0$}.

Thus we obtain that, for any sequences $(\v_n)$ and $(\w_n)$ such that $\v_n(2)=\w_n(2)=0$, $\w_n(1)<0<\v_n(1)$ with $(\v_n(1)-\w_n(1))/n \to 0$, the couple $(\pi^{\w_n}_n,\pi^{\v_n}_n)$ converges in distribution (in the suitable product metric space) to $(B^{\0},B^{\0})$. This result means that whenever two paths are close to each other, precisely within a $o(n)$ distance, then they will quickly coalesce. Although we can deal without it (see e.g. \cite{FFW05}), this is directly implied by the estimated on the coalescing time that we have established at Theorem \ref{thm:CoalescingTimetail}: for any $t>0$, $\P\big(T(\v_n,\w_n)> n^2 \gamma t\big)=o_n(1)$.

For showing the joint convergence of multiple paths, we use the fact that paths behave (almost) independently when they are separated by a large distance (roughly, at least of order $n$). This is possible since the size of renewal blocks between two consecutive renewal steps admits sub-exponentially decaying tails. Hence, distributions of two paths far enough from each other can be realized using independent PPP's. Thus, when paths come close to each other, they coalesce very quickly as indicated just above.

This strategy dealing with dependent paths, originally introduced in \cite{FFW05}, has been modified later to treat the case of long range interactions in \cite{colettidiasfontes} and \cite{RSS15}. We again emphasize the fact that the dependency structure of the DSF model is much more complicated compared to models previously cited.\\

The main change w.r.t. the proof in Section 5.1 of \cite{RSS15} concerns Proposition 5.4 which estimates the horizontal deviations of a path in terms of the height of the rectangle on which the configuration is known. Here is the result corresponding to our setting.

\begin{prop}
\label{prop:HistPathStayClose}
Let $0 < \beta <\alpha $. Consider the rectangle $R := [-m^{\beta},m^{\beta}]\times[0,m^{\beta}]$ for some $m \geq 1$. Let $\pi^{\0}$ be the path of the DSF starting at $\0$. Then,
\begin{equation*}
\P \Big( \sup_{0\leq s\leq m^{\beta}} |\pi^{\0} (s)| \ \geq \ 3 m^{\alpha} \mid \NN\cap R \Big) \leq C_{0} \exp\big(- C_{1} m^{\frac{\alpha-\beta}{2}} \big) ~.
\end{equation*}
\end{prop}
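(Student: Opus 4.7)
The strategy is to decompose $\pi^{\0}$ along its renewal vertices from Section \ref{sect:Renewal} and then to control the horizontal deviation by a truncated random walk estimate combined with a tail bound on the renewal block sizes; this is in the spirit of Proposition~5.4 in \cite{RSS15}.

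Let $\{\u_1^{(\ell)}\}_{\ell \geq 0}$ be the renewal sequence for the single path started at $\u_1^{(0)}=\0$, and $W_\ell$ the size of the $\ell$-th renewal block from \eqref{def:WidthJtRegenration}. Let $L:=\min\{\ell\geq 0:\u_1^{(\ell)}(2)\geq m^{\beta}\}$. Since $k=1$, Proposition \ref{prop:regen}\,(i) implies that the regenerated path coincides with $\pi^{\0}$, so on the event $\{L\leq L_0\}$,
\[
\sup_{0\leq s\leq m^{\beta}}|\pi^{\0}(s)| \ \leq\ \max_{\ell\leq L_0}|\u_1^{(\ell)}(1)| \,+\, \max_{\ell\leq L_0} W_{\ell}.
\]
Combining Proposition \ref{propo:n(t)} with Proposition \ref{propo:GoodRegStepTail} (exponential tail for the inter-renewal step counts), I would fix $L_0:=\lfloor c\, m^{\beta}\rfloor$ so that $\P(L>L_0\mid \NN\cap R)\leq C_0\exp(-C_1 m^{\beta})$, which is already smaller than the target bound.

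The horizontal estimate is obtained by a truncation argument at level $M:=m^{\alpha-\beta}$. By Proposition \ref{prop:IIDMaxSubExpTail}, the $W_\ell$'s are stochastically dominated by i.i.d.\ copies with tail $C_0\exp(-C_1\sqrt{t})$, so a union bound yields
\[
\P\!\left(\max_{\ell\leq L_0} W_\ell\geq M \,\Big|\, \NN\cap R\right) \ \leq\ L_0\, C_0 \exp\bigl(-C_1\, m^{(\alpha-\beta)/2}\bigr),
\]
which is precisely what produces the exponent $(\alpha-\beta)/2$ in the statement. On the complementary event $\{\max_\ell W_\ell<M\}$, Proposition \ref{prop:SinglePtRWalk} gives that the horizontal increments $X_\ell:=\u_1^{(\ell)}(1)-\u_1^{(\ell-1)}(1)$ are (for $\ell\geq 2$) i.i.d., symmetric, and now bounded by $M$. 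Hoeffding's maximal inequality then yields
\[
\P\!\left(\max_{\ell\leq L_0}|\u_1^{(\ell)}(1)|\geq 2m^{\alpha} \,\Big|\, \NN\cap R\right) \ \leq\ 2\exp\!\left(-c\,\frac{m^{2\alpha}}{L_0 M^2}\right)\ =\ 2\exp(-c'\, m^{\beta}),
\]
which is far better than needed. A union bound over the three bad events and an adjustment of universal constants give the claimed bound.

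The main obstacle will be to verify that all the above estimates remain valid \emph{under the conditioning on $\NN\cap R$}. For this one invokes the auxiliary representation of the joint exploration process from Section \ref{sect:JointProcess}, in which every step consumes a fresh independent PPP on $\R^2$, together with Proposition \ref{prop:HistMarkov} (distributional equivalence with the original process). Since the tail bounds of Propositions \ref{propo:GoodRegStepTail}, \ref{prop:SinglePtRWalk} and \ref{prop:IIDMaxSubExpTail} hold uniformly in the initial history set and extra-point set, conditioning on $\NN\cap R$ only alters the first renewal block (whose size still inherits the same sub-exponential tail) and yields the estimates above at the cost of harmless universal constants.
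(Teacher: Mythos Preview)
Your proof has a genuine gap in the final paragraph, precisely at the point you flag as the ``main obstacle.'' The claim that ``conditioning on $\NN\cap R$ only alters the first renewal block'' is incorrect. The path $\pi^{\0}$ starts at the origin, which lies on the boundary of $R$, and during the time interval $[0,m^{\beta}]$ the path and its renewal blocks typically live inside (or at least overlap) $R$. Since each renewal step advances the ordinate by a quantity of order~$1$, there can be of order $m^{\beta}$ renewal steps before the path exits $R$ vertically. Given $\NN\cap R$, the positions $\u_1^{(\ell)}$ and the block sizes $W_\ell$ for all these steps are heavily constrained (essentially deterministic whenever the relevant semi-balls stay inside $R$), so the increments $X_\ell$ are no longer i.i.d.\ symmetric and neither Proposition~\ref{prop:SinglePtRWalk} nor the tail bound of Proposition~\ref{prop:IIDMaxSubExpTail} can be invoked conditionally. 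The auxiliary representation of Proposition~\ref{prop:HistMarkov} does not help here: it gives an \emph{unconditional} distributional identity, but provides no mechanism for conditioning on a fixed spatial window of the original PPP.

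The paper's proof takes a completely different route that sidesteps the conditioning problem. Rather than analysing $\pi^{\0}$ directly, it introduces a comparison path $\pi^{(2m^{\alpha},0)}_{\text{new}}$ started at $(2m^{\alpha},0)$---well to the right of $R$ since $\alpha>\beta$---and built from the PPP $(\NN'\cap R)\cup(\NN\cap R^{c})$ where $\NN'$ is a fresh independent copy. The renewal blocks of this new path are therefore \emph{independent of $\NN\cap R$} by construction. One then considers the event $A:=\{\sum_{j\leq\lfloor m^{\beta}\rfloor}W_j\leq m^{\alpha}\}$ for these blocks; on $A$ the new path stays inside $[m^{\alpha},3m^{\alpha}]\times[0,m^{\beta}]$, never touches $R$, and hence coincides with the path $\pi^{(2m^{\alpha},0)}$ built on $\NN$. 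By the non-crossing property, $\sup_{s}\pi^{\0}(s)\geq 3m^{\alpha}$ forces $\sup_{s}\pi^{(2m^{\alpha},0)}(s)\geq 3m^{\alpha}$, which is impossible on $A$. Thus the conditional probability is bounded by $\P(A^{c}\mid\NN\cap R)=\P(A^{c})$, and a union bound with Proposition~\ref{prop:IIDMaxSubExpTail} gives the sub-exponential decay in $m^{(\alpha-\beta)/2}$. This comparison-and-resampling trick is the missing ingredient; your random-walk decomposition would work for the \emph{unconditional} deviation, but the statement requires a bound that is uniform in the configuration inside $R$.
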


\begin{proof}
We first consider the case where $\sup_{0\leq s\leq m^{\beta}} \pi^{\0} (s)  \geq  3 m^{\alpha}$. The proof for the other case, i.e., $\sup_{0\leq s\leq m^{\beta}} \pi^{\0} (s)  \leq  - 3 m^{\alpha}$
is similar and hence omitted.
Let $\NN'$ be another PPP independent of $\NN$. We consider two paths, say $\pi^{(2m^\alpha, 0)}$ and $\pi^{(2m^\alpha, 0)}_{\text{new}}$, both starting from $(2m^\alpha,0)$, and using respectively the PPP's $\NN$ and $(\NN'\cap R)\cup (\NN\cap R^{c})$. In other words, for the path $\pi^{(2m^\alpha, 0)}_{\text{new}}$, the PPP inside the rectangle $R$ has been re-sampled. Since both paths $\pi^{\0}$ and $\pi^{(2m^\alpha, 0)}$ are constructed with the same PPP $\NN$, the non-crossing path property applies and gives:
$$
\sup_{0\leq s\leq m^{\beta}} \pi^{\0}(s) \ \geq \ 3 m^{\alpha} \; \Rightarrow \; \sup_{0\leq s\leq m^{\beta}} \pi^{(2m^\alpha, 0)}(s) \ \geq \ m^{\alpha} ~.
$$
Now, let us consider the sequence $(W_j)_{j\geq 1}$ of sizes of renewal blocks associated with the single path $\pi^{(2m^\alpha, 0)}_{\text{new}}$. By construction, it does not depend on the configuration $\NN\cap R$. After each renewal step, the $y$-ordinate of the moving vertex increases by at least $\kappa\geq 6$ and hence the path $\pi^{(2m^\alpha, 0)}_{\text{new}}$ can admit at most $\lfloor m^\beta\rfloor$ renewal steps before crossing the horizontal line $\{\x : \x(2)=m^\beta\}$. So, on the event
$$
A := \Big\{ \sum_{j=1}^{\lfloor m^\beta\rfloor} W_j \leq m^{\alpha} \Big\} ~,
$$
$\pi^{(2m^\alpha,0)}_{\text{new}}$ cannot exit the rectangle $[m^\alpha,3m^\alpha]\times[0,m^\beta]$. Moreover, on $A$, the paths $\pi^{(2m^\alpha,0)}$ and $\pi^{(2m^\alpha,0)}_{\text{new}}$ must agree over time interval $[0,m^\beta]$. We can then write:
\begin{eqnarray*}
\lefteqn{\P \Big( \sup_{0\leq s\leq m^{\beta}} \pi^{\0}(s) \geq 3 m^{\alpha} \mid \NN\cap R \Big)} \\
& \leq & \P \Big( \sup_{0\leq s\leq m^{\beta}} \pi^{(2m^\alpha, 0)}(s) \ \geq \ m^{\alpha} \mid \NN\cap R \Big) \\
& \leq & \P \Big( \sup_{0\leq s\leq m^{\beta}} \pi^{(2m^\alpha, 0)}_{\text{new}}(s) \ \geq \ m^{\alpha} \, , \, A \mid \NN\cap R \bigr) + \P \big( A^{c} \mid \NN\cap R \big) \\
& = & \P \big( A^{c} \mid \NN\cap R \big) \; = \; \P ( A^{c}) ~.
\end{eqnarray*}
We conclude using Proposition \ref{prop:IIDMaxSubExpTail}:
$$
\P( A^{c}) \leq \lfloor m^\beta \rfloor \P({\cal W} \geq m^{\alpha-\beta}) \leq C_0 \exp \big( -C_1 m^{\frac{\alpha-\beta}{2}} \big) ~,
$$
for suitable positive constants $C_0,C_1$.

Similar argument using paths starting from the point $(-2m^\alpha,0)$ completes the proof.
\end{proof}


\subsubsection{Verification of condition $(iv)$}
\label{sect:iv}

To show condition $(iv)$, we mainly follow the proof of Theorem 2.9 in \cite{RSS15b}, which was in a discrete setting. As a key ingredient, the coalescence time estimate (Theorem \ref{thm:CoalescingTimetail}) will be used in the proof of Lemma \ref{lem:coalescenceDSF} below.\\

Let $({\cal Z}, \widehat{\cal Z})$ be any subsequential limit of $\{({\cal X}_n, \widehat{\cal X}_n): n \geq 1\}$. {By Skorokhod's representation theorem we may assume that the convergence happens almost surely}. Instead of working with a subsequence, for ease of notation we may assume that the sequence $\{({\cal X}_n, \widehat{\cal X}_n): n\geq 1\}$ converges to $({\cal Z}, \widehat{\cal Z})$ almost surely in the $({\cal H}\times \widehat{{\cal H}}, d_{{\cal H}\times \widehat{{\cal H}}})$ metric space.

We have to prove that, with probability $1$, paths in ${\cal Z}$ do not spend positive Lebesgue measure time together with the dual paths in $\widehat{\cal Z}$. This means that for any $\delta>0$ and any integer $m\geq 1$, the probability of the event
\begin{equation*}
A(\delta, m) := \left\lbrace
\begin{array}{c}
\exists\mbox{ paths }\pi\in {\cal Z}, \widehat{\pi}\in \widehat{\cal Z}\\
\mbox{and }t_0\in\R\mbox{ s.t. }\\
-m<\sigma_{\pi}<t_0<t_0 + \delta<\sigma_{\widehat{\pi}}<m \text{ and }-m<\pi(t) = \widehat{\pi}(t)<m\\
\mbox{for all }t\in [t_0, t_0+\delta]
\end{array}
\right\rbrace
\end{equation*}
has to be $0$.\\

To show that $\P(A(\delta,m))=0$, we introduce a generic event $B^{\epsilon}_n(\delta, m)$ defined as follows. Given an integer $m\geq 1$ and $\delta,\epsilon>0$,
\begin{multline*}
B^{\epsilon}_n(\delta, m) := \\
\left\lbrace
\begin{array}{c}
\text{$\exists$ paths $\pi_1^n,\pi_2^n,\pi_3^n \in {\cal X}_n$ s.t. $\sigma_{\pi_1^n},\sigma_{\pi_2^n}\leq 0$, $\sigma_{\pi_3^n} \leq \delta$ }\\
\text{and $\pi_1^n(0),\pi_1^n(\delta)\in [-m,m]$} \\
\text{with $|\pi_1^n(0)-\pi_2^n(0)|<\epsilon$ but $\pi_1^n(\delta) \not= \pi_2^n(\delta)$} \\
\text{and with $|\pi_1^n(\delta)-\pi_3^n(\delta)|<\epsilon$ but $\pi_1^n(2\delta) \not= \pi_3^n(2\delta)$} \\
\end{array}
\right\rbrace ~.
\end{multline*}
The event $B^{\epsilon}_n(\delta, m)$ means that there exists a path $\pi_1^n$ localized in $[-m,m]$ at time $0$ as well as at time $\delta$ which is approached (within distance $\epsilon$) by two path $\pi_2^n$ and $\pi_3^n$ respectively at times $0$ and $\delta$ while still being different from them respectively at time $\delta$ and $2\delta$. Thanks to the coalescence time estimate (Theorem \ref{thm:CoalescingTimetail}), the following lemma, proved at the end of the section, shows that $B^{\epsilon}_n(\delta, m)$ has a small probability:

\begin{lemma}
\label{lem:coalescenceDSF}
For any integer $m\geq 1$, real numbers $\epsilon,\delta>0$, there exists a constant $C_0(\delta,m)>0$ (only depending on $\delta$ and $m$) s. t. for all large $n$,
\begin{equation*}
\P( B^{\epsilon}_n (\delta, m) ) \leq C_0(\delta, m) \, \epsilon ~.
\end{equation*}
\end{lemma}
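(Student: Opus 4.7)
The plan is to exploit the Markov-like structure of the DSF at time $\delta$ in order to combine two applications of the coalescence time estimate of Theorem~\ref{thm:CoalescingTimetail}, one for the non-coalescence of $(\pi_1^n,\pi_2^n)$ on $[0,\delta]$ and one for the non-coalescence of $(\pi_1^n,\pi_3^n)$ on $[\delta,2\delta]$.

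First I would partition $[-m,m]$ into $N = \lceil 2m/\epsilon\rceil$ intervals $I_j := [x_j,x_{j+1})$ with $x_j = -m + j\epsilon$, and set $I_j^{\mathrm{ext}} := [x_{j-1},x_{j+2})$. Since on $B^\epsilon_n(\delta,m)$ we have $\pi_1^n(0)\in [-m,m]$, the event is contained in $\bigcup_j A_j$ with $A_j := B^\epsilon_n(\delta,m)\cap \{\pi_1^n(0)\in I_j\}$. I would then bound $A_j \subset F_j\cap G_j$, where $F_j$ is the event that two paths of $\mathcal{X}_n$ born before time $0$ have positions in $I_j^{\mathrm{ext}}$ at distance $<\epsilon$ at time $0$ and remain in distinct classes at time $\delta$, and $G_j$ is the event that, given $\mathcal{F}_\delta$, some path $\pi_3^n$ with $\sigma_{\pi_3^n}\leq \delta$ lies within $\epsilon$ of $\pi_1^n(\delta)$ but is distinct from $\pi_1^n$ at time $2\delta$. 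Telescoping Theorem~\ref{thm:CoalescingTimetail} over the adjacent pairs $(Y_i,Y_{i+1})$ of DSF path positions in $I_j^{\mathrm{ext}}$ at time $0$ (noting that by non-crossing any non-coalescing pair within $\epsilon$ forces an adjacent such pair to be non-coalescing), and using $\sum_i (Y_{i+1}-Y_i)\mathbf{1}_{Y_i,Y_{i+1}\in I_j^{\mathrm{ext}}} \leq |I_j^{\mathrm{ext}}| = 3\epsilon$, yields $\P(F_j)\leq 3C\epsilon/\sqrt{\delta}$; an analogous telescoping at time $\delta$ over adjacent DSF paths lying in the $\mathcal{F}_\delta$-measurable interval $(\pi_1^n(\delta)-\epsilon,\pi_1^n(\delta)+\epsilon)$ yields $\P(G_j\mid \mathcal{F}_\delta) \leq 2C\epsilon/\sqrt{\delta}$ almost surely. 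Combining through conditioning at time $\delta$,
\[
\P(A_j) \leq \E\bigl[\mathbf{1}_{F_j}\, \P(G_j\mid \mathcal{F}_\delta)\bigr] \leq \frac{6C^2\,\epsilon^2}{\delta},
\]
and summing over $j$ gives $\P(B^\epsilon_n(\delta,m)) \leq (12C^2 m/\delta)\,\epsilon$, which is the desired conclusion with $C_0(\delta,m) = 12 C^2 m/\delta$.

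The main obstacle is the rigorous application of Theorem~\ref{thm:CoalescingTimetail} to pairs that are not in the ``fresh'' configuration of Proposition~\ref{prop:regen}: at time $\delta$ (for $G_j$) and also at time $0$ (for $F_j$), the two paths considered have an entangled joint history set inherited from their earlier evolution. To circumvent this, one must let the corresponding joint exploration process evolve for a bounded number of extra steps above the current level, until the first joint renewal step; this occurs in sub-exponentially many steps by Proposition~\ref{propo:GoodRegStepTail} and costs only a sub-exponentially small extra delay in ordinate by Proposition~\ref{propo:n(t)}, after which Theorem~\ref{thm:CoalescingTimetail} applies directly to the regenerated pair. A minor ancillary point is that adjacent DSF paths can have unscaled separation below $1$ (outside the range of Theorem~\ref{thm:CoalescingTimetail} as stated), which is handled by monotonicity of the coalescence probability in the initial separation, absorbed in the constant $C$ for $n$ large.
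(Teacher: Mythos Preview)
Your overall architecture—two applications of Theorem~\ref{thm:CoalescingTimetail}, one for each non-coalescence condition, combined by a factorization—matches the paper's. The gap lies in how you factor. You attempt temporal conditioning: bound $\P(G_j\mid\mathcal{F}_\delta)$ almost surely and take expectations against $\mathbf{1}_{F_j}$. But the DSF has no Markov property at real time $\delta$; conditionally on the past, the future of $\pi_1^n,\pi_3^n$ still depends on their history sets, and Theorem~\ref{thm:CoalescingTimetail} is stated only for fresh starting points, not conditionally on a nontrivial history. Your proposed fix—run to the next joint renewal step—does not follow from Proposition~\ref{propo:GoodRegStepTail}: that proposition bounds $\beta_{\ell+1}$ \emph{given $\mathcal{G}_\ell$}, i.e.\ starting from a renewal configuration satisfying~\eqref{cond:LH0}; at an arbitrary real time the history height $L(H_{n(\delta)})$ is random and unbounded, so neither~\eqref{cond:LH0} nor~\eqref{cond:LH0bis} holds and no uniform tail on the waiting time is available without extra lemmas. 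The same issue affects your $F_j$ bound: the ``adjacent DSF path positions'' $Y_i$ at time~$0$ carry history from below, and their interpolated values at time~$0$ are not even independent of $\NN\cap\mathbb{H}^+(0)$, so a direct telescoping of Theorem~\ref{thm:CoalescingTimetail} over these random pairs is not justified.

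The paper avoids conditioning altogether and instead manufactures genuine \emph{independence} from the spatial structure of the PPP. Control events $D^{(1)}_n,D^{(2)}_n,D^{(3)}_n$ (each of probability $1-o(1)$) ensure that all candidate paths $\pi_1^n,\pi_2^n$ and the event $G^\epsilon_n(\delta,m)$ are measurable with respect to $\NN$ restricted to $\{\x(2)\leq n^2\gamma\delta+2n^\beta\}$. Then, partitioning on $k=\lfloor n\sigma\pi_1^n(\delta)\rfloor$ and using a short-range deviation event $F_n(k)$ (handled by Proposition~\ref{prop:HistPathStayClose}), the second non-coalescence is dominated by non-coalescence of two \emph{fresh} paths started at the \emph{deterministic} points $(k\pm n\sigma\epsilon\pm n^\alpha,\,n^2\gamma\delta+2n^\beta)$. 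This last event depends only on $\NN\cap\mathbb{H}^+(n^2\gamma\delta+2n^\beta)$ and is therefore independent of $G^\epsilon_n\cap\{n\sigma\pi_1^n(\delta)\in[k,k+1)\}\cap(\cap_i D^{(i)}_n)$; Theorem~\ref{thm:CoalescingTimetail} then applies unconditionally to each factor. Summing over $k$ collapses the first factor to $\P(G^\epsilon_n)$, which is itself bounded by a union over fresh paths from \emph{integer} points $(l,0)$, $l\in[-n\sigma m,n\sigma m]$, giving $\P(G^\epsilon_n)\leq Cm/\sqrt{\delta}$; the product yields the claim. In short, the paper replaces your conditional step by a decoupling at a slightly elevated level $n^2\gamma\delta+2n^\beta$, which is precisely what is needed to make both invocations of Theorem~\ref{thm:CoalescingTimetail} unconditional.
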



Let us now explain how Lemma \ref{lem:coalescenceDSF} allows us to conclude. For $j=1,\ldots,\lfloor\frac{6m}{\delta}\rfloor$, let us set $t^j:=-m+(j\delta)/3$ and
\begin{multline*}
B^{\epsilon}_n(\delta, m ; j)\\
:= \left\lbrace
\begin{array}{c}
\text{$\exists$ paths $\pi_1^n,\pi_2^n,\pi_3^n \in {\cal X}_n$ s.t. $\sigma_{\pi_1^n},\sigma_{\pi_2^n}\leq t^j$, $\sigma_{\pi_3^n} \leq t^{j+1}$ and } \\
\pi_1^n(t^j), \pi_1^n(t^{j+1})\in [-2m,2m]\text{ with $|\pi_1^n(t^j)-\pi_2^n(t^j)|<4\epsilon$} \\
\text{ but $\pi_1^n(t^{j+1}) \not= \pi_2^n(t^{j+1})$ and with $|\pi_1^n(t^{j+1})-\pi_3^n(t^{j+1})|<4\epsilon$} \\
\text{ but $\pi_1^n(t^{j+2}) \not= \pi_3^n(t^{j+2})$}\\
\end{array}
\right\rbrace ~.
\end{multline*}
The event $B^{\epsilon}_n(\delta, m; j)$ corresponds to the event $B^{4\epsilon}_n(\delta/3,2m)$ considered in Lemma \ref{lem:coalescenceDSF}, 
and shifted up by $t^j$. Hence, by the translation invariance property of the DSF and Lemma  \ref{lem:coalescenceDSF}:
$$
\P (B^{\epsilon}_n(\delta, m ; j)) = \P (B^{4\epsilon}_n(\delta/3, 2m)) \leq 4 C_{0}(\delta/3, 2m) \, \epsilon
$$
for all $n$ large enough. The expected result will follow from:
\begin{equation}
\label{eq:but_fin}
A(\delta, m) \subset \liminf_{ n\to\infty} \bigcup_{j = 1}^{\lfloor\frac{6m}{\delta}\rfloor} B^{\epsilon}_n(\delta, m ; j) ~,
\end{equation}
since we then have:
\begin{eqnarray*}
\P(A(\delta, m)) & \leq & \limsup_{\epsilon\to 0} \P\Bigl( \liminf_{n\to\infty} \cup_{j=1}^{\lfloor\frac{6m}{\delta}\rfloor} B^{\epsilon}_n(\delta, m ; j) \Bigr) \\
& \leq & \limsup_{\epsilon\to 0} \liminf_{n\to\infty} \sum_{j=1}^{\lfloor\frac{6m}{\delta}\rfloor} \P(B^{\epsilon}_n(\delta, m ; j)) \\
& \leq & \limsup_{\epsilon\to 0} \frac{6m}{\delta} 4 C_{0}(\delta/3, 2m) \, \epsilon \; = \; 0 ~.
\end{eqnarray*}

{It then remains to prove \eqref{eq:but_fin}.
Recall that $({\cal Z},\widehat{{\cal Z}})$ is a subsequential limit for $({\cal X}_n, \widehat{{\cal X}}_n)$. By Skorohod's representation theorem we may assume that the convergence happens almost surely. Let us work on the event $A(\delta, m)$, and consider $\pi\in {\cal Z}$, $\widehat{\pi}\in \widehat{{\cal Z}}$ and $t_0\in (\sigma_{\pi}, \sigma_{\widehat{\pi}})$ as in the definition the event $A(\delta, m)$.
It is useful to recall that the convergence of $({\cal X}_n, \widehat{{\cal X}}_n)$ to $({\cal Z}, \widehat{{\cal Z}})$ w.r.t. the Hausdorff metric implies that, for all $n$ large enough, we can find $\pi^{n}$
in ${\cal X}_n$ starting before time $t_0$ and $\widehat{\pi}^n \in \widehat{{\cal X}}_n$ starting after $t_0 + \delta$ that approximate $\pi$ and $\widehat{\pi}$ in the sense that}
\begin{align*}
& \max \Big\{ |\sigma_{\pi} - \sigma_{\pi^{n}}| ,|\sigma_{\widehat{\pi}} - \sigma_{\widehat{\pi}^{n}}|, |\pi(\sigma_{\pi}) - \pi^n(\sigma_{\pi^n})| , |\widehat{\pi}(\sigma_{\widehat{\pi}}) - \widehat{\pi}^n(\sigma_{\widehat{\pi}^n})|, \\
& \qquad \sup_{t\in [t_0,t_0+\delta]} |\pi(t) - \pi^{n}(t)|\vee  |\widehat{\pi}(t) - \widehat{\pi}^{n}(t)| \Big\} < \epsilon_1 .
\end{align*}

{Let us first assume that $\pi^{n}(t_0)<\widehat{\pi}^{n}(t_0)$. Since by construction paths in ${\cal X}_n$ can not cross  paths in $\widehat{{\cal X}}_n$, we must have $\pi^{n}(t)<\widehat{\pi}^{n}(t)$ on the whole time interval $[t_0,t_0+\delta]$. Let $j_0$ be the first index such that $j_0 := \min\{j \geq 1 : -m + (j\delta)/3 \geq t_0\}$. $\pi^n$ plays the role of $\pi^n_1$ as in the definition of $B^\epsilon_n(\delta, m ; j_0)$ and as $\pi^n_2$, we consider the (scaled) DSF path starting from the nearest scaled Poisson point $(x,t)$ to the point $(\widehat{\pi}_n(t^{j_0}),t^{j_0} )$ with $t< t^{j_0}$ and $x > \widehat{\pi}_n(t)$.  As
the forward paths in ${\cal X}_n$ can not cross the dual paths in $\widehat{{\cal X}}_n$, we must have $\pi^n_1(t^{j_0} + \delta/3) \neq \pi^n_2(t^{j_0}  + \delta/3)$. It is not difficult to observe that for all large $n$, the paths $\pi^n_1$ and $\pi^n_2$ satisfies the definition of $B^\epsilon_n(\delta, m ; j)$. With a similar proof, we can show the existence of a third path $\pi^n_3$ satisfying the requirements $B^\epsilon_n(\delta, m ; j)$. The other case, i.e., $\pi^{n}(t_0)> \widehat{\pi}^{n}(t_0)$ can be treated similarly. This completes the proof of \eqref{eq:but_fin}.} \hfill $\Box$

\bigskip

Let us end with the proof of Lemma \ref{lem:coalescenceDSF} which is close to the proof of Lemma 2.11 of \cite{RSS15b}. Both results are mainly based on the coalescence time tail estimates. With respect to Lemma 2.11 of \cite{RSS15b} two additional difficulties appear here: paths of the DSF are non-Markovian and constructed on a Poisson point process. Proposition \ref{prop:IIDMaxSubExpTail} will help us to control this long range dependence.

\begin{proof}[Proof of Lemma \ref{lem:coalescenceDSF}]
Fix $0 < 2 \beta < \alpha < 1$.
First, with high probability, we show that it is enough to consider the (unscaled) paths starting from Poisson vertices in a `thin' rectangular strip $S$ to study the event $B^\epsilon_n(\delta, m)$: $$S := [-2n\sigma m, 2n\sigma m]\times [-2n^\beta, 0].$$
This will help us to control the explored region until these paths cross the line $\{\x \in \R^2 : \x(2)=n^2\gamma \delta\}$.

Define the boxes of side length $n^\beta$ with lower sides on the lines
$ y = -2 n^\beta$, $ y = - n^\beta $ and $y = 0$. These boxes are given for $0 \leq j \leq \lfloor 4 n \sigma m / n^\beta \rfloor $ and $0\leq l \leq 2$ by
\begin{align*}
R_l(j) & := [- 2 n\sigma m + jn^{\beta}, - 2 n\sigma m + (j+1)n^{\beta}]\times [- l n^\beta, (-l + 1 )n^\beta ].
\end{align*}
Define the event $D_n$ as
\begin{align*}
D_n := \bigcap_{j = 0}^{\lfloor 4n\sigma m / n^\beta\rfloor} \cap_{l=0}^2  \{ R_l(j) \cap \NN \neq \emptyset\}.
\end{align*}
In other words, the event $D_n$ states that each of the above boxes must contain at least one Poisson point. It is not difficult to see that $\lim_{n\to \infty}  \P(D_n^c) = 0$. We observe that
on the event $D_n$, all the  paths crossing the segment
$[-n \sigma m, n \sigma m]\times\{0\}$ must start from Poisson points  inside the rectangular strip $S$ otherwise it contradicts the fact that interior of a history semi-ball must be free of Poisson points. Hence on the event $ B^\epsilon_n(\delta,m)\cap D_n$, the scaled paths $\pi^n_1$ and $\pi^n_2$ considered in $B^\epsilon_n(\delta, m)$ must start from
the (scaled) Poisson vertices in the rectangular strip $S$.

Next we show that evolution of the paths starting from Poisson
points in $S$ until they cross the line $y = \in n^2 \gamma \delta $ is
independent of the point process $\NN \cap \mathbb{H}^+(n^2\gamma \delta+2n^\beta)$.
Define the event
\begin{align*}
E_n := \{ & \text{ There exists a path }\pi \text{ starting from Poisson point in }S \\
& \text{ such that the history generated by it till it crosses }\\
&\text{ the line }y = n^2\gamma \delta \text{ does not intersect with }\mathbb{H}^+( n^2\gamma \delta + n^\beta) \}.
\end{align*}
Since between any two successive (marginal) renewals, the concerned path must progress
at least $\kappa + 1$, any path starting from a Poisson point in $S$ can have at most
$\lfloor (n^2\gamma \delta + 2n^\beta)/(\kappa + 1) \rfloor + 1 $ renewals until it
crosses the line $y = n^2\gamma \delta$. Thanks to Proposition \ref{prop:IIDMaxSubExpTail}, as the width of
the region explored between any two successive renewals decays sub-exponentially and
the region $S$ has area $4n \sigma m \times 2n^\beta$, by applying union bound we have  that the
probability $\P(E^c_n)$ decays sub-exponentially. Hence for all large $n$, we can focus on the event
$\P(B^\epsilon_n(\delta, m)\cap D_n \cap E_n)$.

We consider the event $\{B^\epsilon_n(\delta, m), D_n, E_n,
n \sigma \pi^n_1 (\delta) \in [k,k+1)\}$ for $k \in \Z$.
Define the event $G_n^\epsilon (\delta, m)$ as
\begin{equation*}
G^{\epsilon}_n(\delta, m) := \left\lbrace
\begin{array}{c}
\text{$\exists$ paths $\pi_1^n,\pi_2^n \in {\cal X}_n$ s.t. $\sigma_{\pi_1^n},\sigma_{\pi_2^n}\leq 0$ and $\pi_1^n(0),\pi_1^n(\delta)\in [-m,m]$} \\
\text{with $|\pi_1^n(0)-\pi_2^n(0)|<\epsilon$ but $\pi_1^n(\delta) \not= \pi_2^n(\delta)$} \\
\end{array}
\right\rbrace ~.
\end{equation*}
Because of  the non-crossing nature of paths, we must have
 \begin{align*}
& \{B^\epsilon_n(\delta, m), D_n, E_n, n \sigma \pi^n_1 (\delta) \in [k,k+1)\}
 \subset \\
 & \qquad \{\pi^{(k - n \sigma \epsilon, n^2 \gamma \delta)}( 2n^2 \gamma \delta) \neq \pi^{(k + n \sigma \epsilon, n^2 \gamma \delta)}( 2n^2 \gamma \delta), \\
 & \qquad \qquad G^\epsilon_n(\delta, m), D_n, E_n,
n \sigma \pi^n_1 (\delta) \in [k,k+1)\} ,
 \end{align*}
 because these paths are separated by $\pi_1^n$ and $\pi^n_3$.

 For $\lfloor - n \sigma m \rfloor - 1 \leq k \leq \lfloor n \sigma m \rfloor$, define the event  $F_{n}(k)$ as
\begin{align*}
F_{n}(k) := \big\{ & k - n \sigma \epsilon - n^{\alpha} \leq \pi^{(k - n \sigma \epsilon, n^{2}\gamma\delta)}(n^{2}\gamma\delta + n^{\beta}) \\
& \leq \pi^{(k + n \sigma \epsilon, n^{2}\gamma\delta)}(n^{2}\gamma\delta + n^{\beta}) \leq k + n \sigma \epsilon + n^{\alpha} \big\}.
\end{align*}
The event $F_{n}(k)$ asks that the paths starting at $(k - n \sigma \epsilon, n^{2}\gamma\delta)$
and $(k + n \sigma \epsilon, n^{2}\gamma\delta)$ do not fluctuate too much till time $n^{2}\gamma\delta+n^{\beta}$.
We showed earlier that, on the event $B^\epsilon_n(\delta)\cap D_n \cap E_n$, the DSF paths starting from  Poisson vertices in the rectangular strip $S$
do not explore the point process $\NN \cap \mathbb{H}^{+}(n^2\gamma \delta + n^\beta)$ until they  cross the line $y = n^2 \gamma \delta $. Recall that $0 < 2 \beta < \alpha < 1$
and observe that on the event $F_n(k)^{c}$, at least one of the two paths starting from $(k - n\sigma \epsilon, n^{2}\gamma\delta)$ and $(k + n\sigma \epsilon, n^{2}\gamma\delta)$ admits fluctuations larger than $n^{\alpha}$ on the time interval $[n^{2}\gamma\delta,n^{2}\gamma\delta + n^{\beta}]$. By Proposition \ref{prop:HistPathStayClose}, this has a probability smaller than $C_0e^{-C_1 n^{(\alpha-\beta)/2}}$. This gives that for any $\lfloor - n \sigma m \rfloor - 1 \leq k \leq \lfloor n \sigma m \rfloor $, the probability of the event $$(F_{n}(k))^c \cap \{n \sigma \pi^n_1(\delta) \in [k,k+1)\}\cap B^\epsilon_n(\delta,m)\cap  D_n \cap E_n $$ decays to $0$ sub-exponentially and uniformly in $k$. Hence
we can focus on the event  $F_{n}(k) \cap \{n\sigma\pi^n_1(\delta) \in [k,k+1)\}\cap B^\epsilon_n(\delta,m)\cap  D_n \cap E_n $, and the non-crossing path property forces
 the paths starting at $(k - n \sigma \epsilon -n^{\alpha}, n^{2}\gamma\delta +  n^{\beta})$
  and $(k + n \sigma \epsilon + n^{\alpha}, n^{2}\gamma\delta + n^{\beta})$
   to be still different at time $2n^{2}\gamma\delta$. So we obtain,
\begin{align}
\label{Eq:FnInd}
& \P(F_{n}(k) \cap \{n\sigma \pi^n_1(\delta) \in [k,k+1)\}\cap B^\epsilon_n(\delta,m)\cap D_n \cap E_n)\nonumber\\
 & \leq \P \bigl (\{\pi^{(k - n \sigma \epsilon -n^{\alpha}, n^{2}\gamma\delta + n^{\beta})}(2 n^2 \gamma \delta) \neq \pi^{(k + n \sigma \epsilon + n^{\alpha}, n^{2}\gamma\delta + n^{\beta})}(2 n^2 \gamma \delta)\}\cap \nonumber\\
& \qquad \{ n\sigma\pi^n_1(\delta)  \in [k,k+1)\}\cap  G^\epsilon_n(\delta,m)\cap D_n \cap E_n \bigr ).
\end{align}
Observe that the event $\{\pi^{(k - n \sigma \epsilon -n^{\alpha}, n^{2}\gamma\delta + n^{\beta})}(2 n^2 \gamma \delta) \neq \pi^{(k + n \sigma \epsilon + n^{\alpha}, n^{2}\gamma\delta + n^{\beta})}(2 n^2 \gamma \delta)\}$ depends only on the point process $\NN \cap \mathbb{H}^+(n^2 \gamma \delta +  n^\beta)$
 and the event $\{n\sigma\pi^n_1(\delta)  \in [k,k+1)\}\cap  G^\epsilon_n(\delta,m)\cap
  D_n \cap E_n$ depends only on the point process $\NN \cap \mathbb{H}^-(n^2\gamma \delta + n^\beta)$. Hence we have independence of the two events  in (\ref{Eq:FnInd}) and Theorem \ref{thm:CoalescingTimetail} gives that for all large $n$,
\begin{align*}
\P \big( & \pi^{(k - n \sigma \epsilon -n^{\alpha}, n^{2}\gamma\delta + n^{\beta})}(2n^{2}\gamma\delta)\not= \pi^{(k + n \sigma \epsilon + n^{\alpha}, n^{2}\gamma\delta +  n^{\beta})}(2n^{2}\gamma\delta) \big) \\
& \qquad \leq \frac{C_{0} (2n\sigma\epsilon + 2n^{\alpha})}{\sqrt{ n^{2}\gamma\delta - n^{\beta}}} \leq C_{0} \epsilon
\end{align*}
where $C_{0}=C_{0}(\delta)>0$ is suitably chosen.

 As the events $\{ n\sigma\pi^n_1(\delta) \in [k,k+1)\}$ are disjoint for different $k\in\Z$, it follows:
$$
\sum_{k = \lfloor - n \sigma m \rfloor - 1}^{\lfloor  n \sigma m \rfloor }
\P\bigl( \{n\sigma\pi^n_1(\delta)  \in [k,k+1)\}\cap  G^\epsilon_n(\delta,m)\cap  D_n \cap E_n \bigr)
 \leq \P( G^\epsilon_n(\delta,m)).
$$
Fix any $0 < \theta < 1$. The above discussion shows that for all large $n$ we have
$$
\P(B^\epsilon_n(\delta, m)) \leq \theta + C_{0} \epsilon \P( G^\epsilon_n(\delta,m)).
$$
In order to estimate the probability of $G_n^\epsilon(\delta, m)$ we define another event
\begin{align*}
H^\epsilon(\delta,m,l) := \{\pi^{(l, 0)} (n^2 \gamma \delta) \neq \pi^{(l+1, 0)} (n^2 \gamma \delta)\}\text{ for }\lfloor -n\sigma m \rfloor - 1 \leq l \leq \lfloor n\sigma m \rfloor.
\end{align*}
By non-crossing property we have that $G_n^\epsilon (\delta, m) \subset \bigcup_{l= \lfloor -n\sigma m \rfloor - 1}^{\lfloor n\sigma m \rfloor } H^\epsilon(\delta,m,l)$. To observe this inclusion relation, consider the event $B^{\epsilon}_n \cap \{\pi^n_1(0) < \pi^n_2(0) \}$ and observe that the paths starting from the points  $(\lfloor \pi^n_1(0) \rfloor,0) $ and $(\lfloor \pi^n_2(0) \rfloor + 1,0) $
must be different at time $n^2 \gamma \delta$. Similar reasoning follows for the event $B^{\epsilon}_n \cap \{\pi^n_1(0) > \pi^n_2(0) \}$.
Translation invariance of Poisson point process and use of Theorem \ref{thm:CoalescingTimetail} give that for all $\lfloor -n\sigma m \rfloor - 1 \leq l \leq \lfloor n\sigma m \rfloor$
$$
\P (H^\epsilon(\delta,m,l)) \leq \frac{C_0}{\sqrt{n^2 \gamma \delta}}.
$$

Hence, for all large $n$ we have
\begin{align*}
\P (B^\epsilon (\delta, m))\leq  & \theta + C_0 \epsilon \P (G^\epsilon_n (\delta, m)) \leq \theta + C_0 \epsilon (2 n \sigma m + 1) \P (H^\epsilon (\delta, m,l))\\
\leq  & \theta + C_0\epsilon ,
\end{align*}
where $C_0(m,\delta ) > 0$ is adjusted accordingly.
Since $\theta > 0$ is chosen arbitrarily, This completes the proof.
\end{proof}


\section{Theorem \ref{theo:sublinRST}: a sketch of the proof}

Recall that the Radial Spanning Tree (RST), initially introduced in \cite{BB07}, is a tree rooted at the origin $O$ with vertex set $\NN\cup\{O\}$ in which each vertex $\x\in\NN$ is connected to the closest Poisson point to $\x$ but inside the open ball $\{\y\in\R^2 : \, \|\y\|_2<\|\x\|_2\}$. Theorem 2.1 of \cite{BB07} states that the RST a.s. admits semi-infinite paths in each direction $\theta\in[0,2\pi)$. In particular, the (random) number $\chi_r$ of semi-infinite paths of the RST crossing the circle $\mathcal{C}_r$ with radius $r$, tends to infinity with probability $1$. Theorem \ref{theo:sublinRST} claims that
\begin{equation}
\label{sublin:InfinitePaths}
\E \chi_{r} = o(r^{3/4+\epsilon}) ~,
\end{equation}
for any $\epsilon>0$. Actually, our strategy to prove (\ref{sublin:InfinitePaths}) has been already developed in Section 6 of \cite{C2017} for a similar geometric random tree called the \textit{Radial Poisson Tree}. So we only focus here on the (minor) changes w.r.t. \cite{C2017}.\\

By isotropy, it is sufficient to prove that, for $0<\alpha<1/4$, $\E\chi_{r}(2r^{\alpha})$ tends to $0$ as $r\to\infty$ where $\chi_{r}(2r^{\alpha})$ counts the intersection points between the semi-infinite paths of the RST and the arc of the circle $\mathcal{C}_r$, centred at $(0,-r)$ and with length $2r^{\alpha}$. Approximating the RST around $(0,-r)$ by the DSF with direction $-e_y$ (especially using Lemma 3.4 of \cite{BB07} instead of Lemma 6.4 of \cite{C2017}) we show that
\begin{equation}
\label{sublin:ApproxRSTDSF}
\limsup_{r\to\infty} \E \chi_{r}(2r^{\alpha}) \leq \limsup_{r\to\infty} \E \eta_{r}(\alpha,\beta,\eps)
\end{equation}
where $\eps,\beta>0$ are such that $\alpha<\beta/2$ and $\beta+\eps<1/2$, and where $\eta_{r}(\alpha,\beta,\eps)$ counts the  intersection points between the horizontal segment $[-r^{\alpha},r^{\alpha}]\times\{-r\}$ and paths of the DSF starting from the outside of the rectangle $[-r^{\beta/2+\eps},r^{\beta/2+\eps}]\times [-r,-r-r^{\beta}]$.

Controlling with high probability the deviations of DSF paths (with Theorem 4.10 of \cite{BB07} instead of Lemma 6.6 of \cite{C2017}), (\ref{sublin:ApproxRSTDSF}) also holds if paths counted by $\eta_{r}(\alpha,\beta,\eps)$ are assumed to cross the lower side of the corresponding rectangle, i.e. the horizontal segment $[-r^{\beta/2+\eps},r^{\beta/2+\eps}]\times\{-r-r^{\beta}\}$. Thus, standard arguments based on the invariant translation property of the DSF (see the proof of Lemma 6.7 of \cite{C2017}) leads to
\begin{equation}
\label{sublin:InvTranslation}
\limsup_{r\to\infty} \E \chi_{r}(2r^{\alpha}) \leq \limsup_{r\to\infty} \E \tilde{\eta}_{r}(\alpha,\beta)
\end{equation}
where $\tilde{\eta}_{r}(\alpha,\beta)$ is defined as the number of intersection points between the horizontal axis $\R\times\{-r\}$ and DSF paths crossing the segment $[-r^{\alpha},r^{\alpha}]\times\{-r-r^{\beta}\}$. {Thenceforth,
\begin{equation}
\label{Isotropy1to0}
\limsup_{r\to\infty} \E \tilde{\eta}_{r}(\alpha,\beta) \leq 1
\end{equation}
allows to conclude. Indeed (\ref{Isotropy1to0}) implies that $c(\alpha):= \limsup \E \chi_{r}(2r^{\alpha})$ is smaller than $1$ for any $0<\alpha<1/4$. Let $M>0$ and $\alpha<\alpha'<1/4$. By isotropy of the RST and for $r$ large enough, we get $\E \chi_{r}(2r^{\alpha'})\geq M \E \chi_{r}(2r^{\alpha})$. Taking supremum limits, the inequality $1\geq M c(\alpha)$ follows. When $M\to\infty$, this forces $c(\alpha)=0$.}

{It remains to prove (\ref{Isotropy1to0}).} For $i=\lfloor -r^{\alpha}\rfloor,\ldots,\lfloor r^{\alpha}\rfloor$, let us denote by $\gamma_i$ the DSF path starting at the deterministic point $(i,-r-r^{\beta})$ and by $Y_i$ the number of edges crossing the horizontal unit segment $[i,i+1)\times\{-r-r^{\beta}\}$. Then, a.s.
\begin{equation}
\label{sublin:ComparaisonAS}
\tilde{\eta}_{r}(\alpha,\beta) \leq 1 \, + \,  \sum_{i=\lfloor -r^{\alpha} \rfloor}^{\lfloor r^{\alpha}\rfloor} (Y_i + 1) \, \ind_{\{\gamma_{i} \not=\gamma_{i+1} \mbox{ at time } -r\}} ~,
\end{equation}
where the event $\{\gamma_{i} \not=\gamma_{i+1} \mbox{ at time } -r\}$ means that paths $\gamma_{i}$ and $\gamma_{i+1}$ are still disjoint when they cross the horizontal axis $\R\times\{-r\}$. Since $\alpha<\beta/2$, one can find parameters $p,q>1$ such that $\alpha<\beta/(2p)$ and $1/p+1/q=1$. Then, the H\"older's inequality combined with our coalescence time estimate (Theorem \ref{thm:CoalescingTimetail}) gives:
\begin{eqnarray*}
\E \tilde{\eta}_{r}(\alpha,\beta) & \leq & 1 + 3 r^{\alpha} \E (Y_0+1) \, \ind_{\{\gamma_{0} \not=\gamma_{1} \mbox{ at time } -r\}} \\
& \leq & 1 + 3 r^{\alpha} \big(\E (Y_{0}+1)^{q}\big)^{1/q} \, \P\big( \gamma_{0} \not=\gamma_{1} \mbox{ at time } -r \big)^{1/p} \\
& \leq & 1 + 3 \big(\E (Y_{0}+1)^{q}\big)^{1/q} \frac{C_{0}^{1/p} r^{\alpha}}{r^{\beta/(2p)}}
\end{eqnarray*}
which tends to $1$ as $r\to\infty$. Above, we have used the fact that {the number $Y_0$ of DSF edges crossing an horizontal segment with unit length, admits moments of all orders. Indeed, the event $\{Y_0>\ell\}$ with large $\ell$, forces the existence of an edge counted by $Y_0$ with length larger than $\ell^{\delta}$, for some $\delta>0$. This implies the existence of an empty semi-ball with radius $\ell^{\delta}$ and the claim easily follows.}

{In the proof of $\lim \E \chi_{r}(2r^{\alpha})=0$, we have worked through a rectangle whose horizontal and vertical sizes $r^{\alpha}$ and $r^{\beta}$ have been chosen as follows. On the one hand, DSF paths inside this rectancle have to coalesce with high probability, which is ensured whenever $\alpha\leq \beta/2$ (Theorem \ref{thm:CoalescingTimetail}). On the other hand, the approximation of RST paths by DSF paths has to be valid in the whole rectangle, which requires $\beta<1/2$. The combination of these two conditions explains the exponent $3/4$ in Theorem \ref{theo:sublinRST}.}\\

Finally, the proof of the almost sure convergence of $\chi_{r}/r^{3/4+\epsilon}$ to $0$ follows from the convergence in expectation using the same arguments as in Section 7 of \cite{C2017}.


{\footnotesize

}
\end{document}